\numberwithin{equation}{section}
\theoremstyle{plain}
\newtheorem{theorem}{Theorem}[section]
\newtheorem{lemma}[theorem]{Lemma}
\newtheorem{proposition}[theorem]{Proposition}
\theoremstyle{remark}
\newtheorem*{remark}{Remark}
\theoremstyle{definition}
\newcommand{\R}{\mathbb{R}}
\newcommand{\Z}{\mathbb{Z}}
\newcommand{\SF}{\mathbb{S}}
\newcommand{\eps}{\epsilon}
\begin{document}

\title{Periodic motions for multi-wells potentials and \\
layers dynamic for the vector Allen-Cahn equation}
\author{{ Giorgio Fusco\footnote{Dipartimento di Matematica Pura ed Applicata, Universit\`a degli Studi dell'Aquila, Via Vetoio, 67010 Coppito, L'Aquila, Italy; e-mail:{\texttt{fusco@univaq.it}}}}
}

\date{}
\maketitle

\begin{abstract}

We consider a nonnegative potential $W$ that vanishes on a finite set and study the existence of periodic orbits of the equation
\[\ddot{u}=W_u(u),\;\;t\in\R,\]
that have the property of visiting neighborhoods of zeros of $W$ in a given finite sequence. We give conditions for the existence of such orbits. After introducing the new variable $x=\epsilon t$, $\epsilon>0$ small, these orbits correspond to stationary solutions of the parabolic equation
\[u_t=u_{xx}-W_u(u),\;\;x\in(0,1),\;t>0,\]
with periodic boundary conditions.

In the second paper of the paper we study solutions of this equation that, as the stationary solutions, have a layered structure.% and remain near the set of the zeros of $W$ for a large part of the interval $(0,1)$ and jump from one to another of the zeros of $W$ in small intervals of size of order $\epsilon$.
We derive a system of ODE that describes the dynamics of the layers and show that their motion is extremely slow.
\end{abstract}
{\hskip1cm Keywords}: Periodic motion, Layers dynamics, Singular perturbations. 
\section{Introduction}\label{1}\noindent

Let $W:\R^m\rightarrow\R$ be a nonnegative $C^4$ potential that satisfies
\[0=W(a)<W(u),\;a\in A,\;u\in\R^m\setminus A,\]
where $A\subset\R^m$ is a finite set with at least two distinct elements. We consider the mechanical system
\begin{equation}
u^{\prime\prime}=W_u(u),\;W_u(u)=(\frac{\partial W}{\partial u_1}(u),\ldots,\frac{\partial W}{\partial u_m}(u))^\top,
\label{newton}
\end{equation} and, given a small number $\delta>0$ and
 $N\geq 2$ points $a_1,\ldots,a_N\in A$ with $a_j\neq a_{j+1}$, $j=1,\ldots,N-1$ and $a_N\neq a_1$ we study the existence of periodic solutions $u^T:\R\rightarrow\R^m$ of \eqref{newton} that satisfies
\begin{equation}\label{nearaj}
 u^T(t_j)\in B_\delta(a_j),\;\;j=1,\ldots,N,
\end{equation}
for some $t_1<\ldots<t_N\in[0,T)$, $T>0$ the period of $u^T$.

We also require that $u^T$ remains away from $A\setminus\{a_1,\ldots,a_N\}$. In Fig. \ref{fig1} we sketch a situation where $A$ has four elements and $N=6$.

\begin{figure}
  \begin{center}
\begin{tikzpicture}
\draw [](1.4,0)--(6.6,0);
\draw [](4,-1.6)--(4,1.6);

\draw [blue, thick](4,0).. controls (4.3,2.5) and(5,.2)..(6,.1);
\draw [blue, thick](4,0).. controls (4.3,-2.5) and(5,-.2)..(6,-.1);

\draw [blue, thick](4,0).. controls (3.7,2.5) and(3,.2)..(2,.1);
\draw [blue, thick](4,0).. controls (3.7,-2.5) and(3,-.2)..(2,-.1);

\draw [blue] [->](5,.79)--(4.9,.89);
\draw [blue] [<-](5,-.79)--(4.9,-.89);

\draw [blue] [->](3,.79)--(3.1,.89);
\draw [blue] [<-](3,-.79)--(3.1,-.89);

\draw [blue, thick](2,.1) arc [radius=.1, start angle=90, end angle=270];
\draw [blue, thick](6,-.1) arc [radius=.1, start angle=-90, end angle=90];

%\draw [blue, very thick](7.5,1.5).. controls (8.5,-.5) and(10.5,-.5)..(11.5,1.5);
%\draw [blue, thick](4.2,+.05).. controls (5.5,1.2) and(6.5,1.1)..(7.8,+.05);
 %\node at (3.9,2){$\sigma^*_h=1$};

 %\node at (8.9,1.4){$A=\{a,b,c,d\}$};
 %\node at (8.9,0){$N=6$};
% \node at (9,-1.4){$a_1=a,a_2=b,a_3=d,a_4=c,a_5=b,a_6=d$};

 % \node at (3.9,2){$\sigma^*_h=1$};
% \node at (8.9,2){$\sigma^*_h=-1$};
%\node at (8.2,.15){$a_+$};
%\node at (7.1,1){$J^0(\xi)$};
%\node at (2.1,1){$J^0(\xi)$};
%\node at (4,-.1){$\xi\in\Xi_\rho$};
%\node at (10,-.2){$\xi\in\Xi_\rho$};
%\draw (1.5,0)--(6.5,0);
%\draw (7,-.5)--(12,-.5);}
%\draw [blue] [->](5.55,1.07)--(5.6,1.095);}
%\end{tikzpicture}
%\caption{ $W$ in the finite dimensional case.}
%\end{figure}

%%%%%%%%%%%%%%%%%%%%%%%%%%%%%%%%%%%%%%%%%%%%%%%%%%%%%%%%%%%%%%%%%%%%%%%%%%%%%%%%%%%%%%%%%%%%%%%%%%%%%%%%%%%%%%
%%%%%%%%%%%%%%%%%%%%%%%%%%%%%%%%%%%%%%%%%%%%%%%%%%%%%%%%%%%%%%%%%%%%%%%%%%%%%%%%%%%%%%%%%%%%%%%%%%%%%%%%%%%%%%%%%

%{\draw [blue, very thick ,->] (4.5,-6) arc (60:120:2.595);}
%{\draw [blue, very thick ,->] (7.095,-1.5) arc (-180:-120:2.595);}
%{\draw [blue, very thick ,->] (1.905,-1.5) arc (-60:0:2.595);}
%{\draw [blue, very thick ] (5.4,-3.75) arc (60:120:2.595);
%\draw [blue, very thick ] (4.095,-1.5) arc (-180:-120:2.595);
%\draw [blue, very thick ] (2.8,-3.75) arc (-60:0:2.595);}
%\node at (12.3,0){$\pi_\gamma$};
\draw[red, fill] (4.,-1.5) circle [radius=0.05];
\draw[red, fill] (4.,1.5) circle [radius=0.05];
\draw[red, fill] (1.6,0) circle [radius=0.05];
\draw[red, fill] (6.4,0) circle [radius=0.05];

 \node at (6.8,0){$a$};
  \node at (1.2,0){$c$};
   \node at (4,1.9){$b$};
    \node at (4,-1.9){$d$};

%{\draw[ fill] (8.89,-1.45) circle [radius=0.05];
%\draw[ fill] (10.05,-3.55) circle [radius=0.05];
%\draw[ fill] (8.05,-3.65) circle [radius=0.05];
%\draw [blue, very thick ] (9.0,-2.75) circle [radius=1.3];}

%{\node at (3,-2){a)};
%\node at (7,-2){b)};}

\end{tikzpicture}
\end{center}

\caption{ $A=\{a,b,c,d\}$,\;$N=6$ and $a_1=a,a_2=b,a_3=d,a_4=c,a_5=b,a_6=d$.}
\label{fig1}
\end{figure}
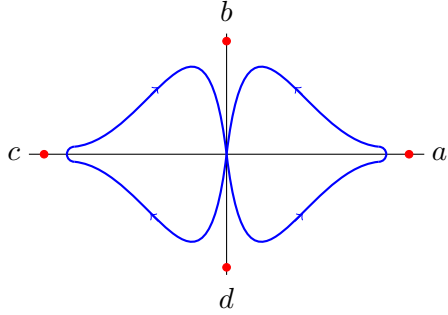

If $A=\{a_-,a_+\}$ with $a_-\neq a_+$ the problem is well understood, see for example \cite{as}, \cite{amz} and under minimal assumption on $W$, there is a $T_0>0$ such that, for each $T\geq T_0$, there exists a $T$-periodic solution $u^T$ that oscillates twice for period on the same trajectory with extreme at $u^T(0)\in B_\delta(a_-)$ and $u^T(T/2)\in B_\delta(a_+)$ and
\[\lim_{T\rightarrow+\infty}\delta=0.\] Moreover the speed  vanishes at $t=0$ and $t=T/2$. For this reason $u^T$ is called a \emph{brake} orbit. The existence of brake orbits that oscillate between neighborhoods of two minima of the potential has been extended also to an infinite
dimensional setting where $W$ is replaced by a functional $J:\mathcal{H}\rightarrow\R$  ($\mathcal{H}$ a suitable function space) with two distinct global minimizers $\bar{u}_-\neq\bar{u}_+\in\mathcal{H}$ that correspond to the minima $a_-,a_+$ of $W$, see \cite{am}, \cite{fgn} and \cite{p}.

The case $\sharp A\geq 3$ is open and its analysis in full generality is probably very difficult. We study the problem under generic assumptions that we now describe. We note that if a periodic solutions $u^T$ with the properties listed above exists for all $\delta>0$ small
we can expect that there is $t_j^*\in(t_j,t_{j+1})$ such that, at least along a subsequence, it results
\begin{equation}
\lim_{T\rightarrow+\infty}u^T(t-t_j^*)=\bar{u}_j(t),
\label{u-baru}
\end{equation}
where $\bar{u}_j$ is a heteroclinic solution of \eqref{newton}
 that connects $a_j$ to $a_{j+1}$. Therefore a natural assumption in the study of the above question is
\begin{description}
\item[H$_1$] For each $j=1,\ldots,N$ there exists a solution $\bar{u}_j$ of (\ref{newton}) which is a minimizer of the action $J(u)=\int_\R\Big(\frac{1}{2}\vert u^\prime\vert^2
    +W(u)\Big)ds$ in the class of maps that connect $a_j$ to $a_{j+1}$ ($a_{N+1}=a_1$).
\end{description}
\begin{remark}
\label{het?} $A=\{a_-,a_+\}$, $a_-\neq a_+$ implies the existence of  a solution $\bar{u}$ of \eqref{newton} that connects $a_-$ to $a_+$ \cite{amz}, \cite{monteil}, \cite{R}, \cite{s}, \cite{ZS}.
If $A$ contains more than two elements, the existence of a heteroclinic solution $\bar{u}$ that connects $a_-\in A$ to $a_+\in A$, $a_-\neq a_+$ is not guaranteed. A sufficient condition for $\bar{u}$ to exist it that
\[\sigma_{a_-,a_+}<\sigma_{a_-,a}+\sigma_{a,a_+},\;\;a\in A\setminus\{a_-,a_+\},\]
where, for $a\neq b\in A$,
\[\sigma_{a,b}=\inf_{u\in\mathscr{A}_{a,b}}J(u),\quad\mathscr{A}_{a,b}=\{u\in W_{\mathrm{loc}}^{1,2}:\,u\,\;\text{connects}\;\,a\,\;\text{to}\;\,b\}.\]
\end{remark}
We also assume the following nondegeneracy conditions:
\begin{description}
\item[H$_2$]  The hessian matrix $W_{uu}(a_j)$, $j=1,\ldots,N$ has distinct positive eigenvalues.
\item[H$_3$] The zero eigenvalue of the operator ${L}_j: W^{2,2}(\R; \R^m) \rightarrow L^2(\R; \R^m)$
 defined by
 \[{L}_j v= -v^{\prime \prime}+W_{uu}(\bar{u}_j) v,\]
 is simple.
\end{description}
\begin{remark}
\label{0-eign}
For each $s_0\in\R$, $\bar{u}_j(\cdot-s_0)$ is a solution of \eqref{newton}. Therefore by differentiating with respect to $s_0$ we get $\bar{u}_j^{\prime\prime\prime}=W_{uu}(\bar{u}_j)\bar{u}_j^\prime$. That is $0$ is an eigenvalue of ${L}_j$ and $\bar{u}_j^\prime$ a corresponding eigenvector.
\end{remark}
The smoothness of $W$ and {\bf{H}$_2$} imply, see Lemma \ref{w}, the existence of the limits
\begin{equation}
\lim_{s\rightarrow-\infty}\frac{\bar{u}_j(s)-a_j}{\vert\bar{u}_j(s)-a_j\vert}=z_j^-,\quad\quad
\lim_{s\rightarrow\infty}\frac{\bar{u}_{j-1}(s)-a_j}{\vert\bar{u}_{j-1}(s)-a_j\vert}=z_j^+,
\label{lim-z+-z-}
\end{equation}
where $z_j^\pm$ are eigenvectors of the matrix $W_{uu}(a_j)$. We define $\bar{u}_j$ also for $j=N+1$ and $s<0$ and for $j=0$ and $s>0$ by setting
\[\begin{split}
&\bar{u}_{N+1}(s)-a_{N+1}=\bar{u}_1(s)-a_1,\;\;s<0,\\
&\bar{u}_0(s)-a_1=\bar{u}_N(s)-a_{N+1},\;\;s>0,
\end{split}\]
and observe that these definitions imply
\[z_1^+=z_{N+1}^+,\;\;z_{N+1}^-=z_1^-.\]
Note that from {\bf{H}$_2$} the eigenvectors $z_j^-$ and $z_j^+$ are either parallel or orthogonal.

We prove the following
\begin{theorem}
\label{CNS}
Assume {\bf{H}$_1$}-{\bf{H}$_3$} and
\begin{description}
\item[H$_4$] $z_j^-=\pm z_j^+,\quad\quad j=1,\ldots,N,$
\end{description}
Then, the condition
\begin{equation}
z_j^-\cdot z_j^+=z_{j+1}^-\cdot z_{j+1}^+,\;\;j=1,\ldots,N,
\label{sigmaproduct0}
\end{equation}
is necessary and sufficient for the existence of a family $u^T:\R\rightarrow\R^m$, $T\geq T_0$, for some $T_0>0$ of $T$-periodic solutions of \eqref{newton} that satisfy:
\begin{enumerate}
\item There exist numbers $0\leq \xi_1<\xi_2<\ldots<\xi_N<1$ such that
\[\begin{split}
&\lim_{T\rightarrow+\infty}\xi_j-\xi_{j-1}=\frac{1/\mu_j}{\sum_h1/\mu_h},\;\; (\xi_0=\xi_N-1)\\
&\quad\text{and}\\
&\lim_{T\rightarrow+\infty}u^T(t+\xi_jT)=\bar{u}_j(t),
\end{split}\]
uniformly in compacts.
\item Given $\delta>0$ there are $t_\delta>0$ and $T_\delta>0$, $t_\delta$ independent of $T$, such that
\[\begin{split}
&u^T(t)\in B_\delta(a_j),\;\;\text{for}\;t\in(\xi_{j-1}T+t_\delta,\xi_jT-t_\delta),\\
& j=1,\ldots,N,\;\;T\geq T_\delta.
\end{split}\]
\item There exist $k_0,K_0$ independent of $T$ such that
\[\begin{split}
&\|u^T-u^{T,\xi}\|_{W^{1,2}((0,T);\R^m)}\leq K_0e^{-k_0 T}e^{-\frac{T}{2\sum_h\frac{1}{\mu_h}}},\\
&\vert J_{(0,T)}(u^T)-J_{(0,T)}(u^{T,\xi})\vert\leq K_0e^{-k_0 T}e^{-\frac{T}{2\sum_h\frac{1}{\mu_h}}}.
\end{split}\]
where $u^{T,\xi}$ is defined in \eqref{uTxi} and $J_{(0,T)}(v)=\int_0^T\Big(\frac{\vert v^\prime\vert^2}{2}+W(v)\Big)dt$.
\end{enumerate}
\end{theorem}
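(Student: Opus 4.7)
\emph{Necessity.} Since \eqref{newton} is autonomous, the mechanical energy $E=\tfrac12|u^{T\prime}|^2 - W(u^T)$ is constant along $u^T$. Using the spectral decomposition of $W_{uu}(a_j)$ furnished by $\mathbf{H}_2$, together with the fact that, by $\mathbf{H}_3$ and $\mathbf{H}_4$, the slowest decaying/growing mode at each $a_j$ is one-dimensional, parallel to $z_j^\pm$, with rate $\mu_j$, I would approximate $u^T - a_j$ near each $a_j$ by its dominant component $(A_j e^{\mu_j(t-t_j^\ast)} + B_j e^{-\mu_j(t-t_j^\ast)}) z_j^-$. Substituting into $E$ yields $E = -2\mu_j^2 A_j B_j + o(1)$ as $T\to\infty$. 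The asymptotics of $\bar{u}_{j-1}$ at $+\infty$ and of $\bar{u}_j$ at $-\infty$ then identify $\operatorname{sign}(A_j) = +1$ (departure toward $+z_j^-$) and $\operatorname{sign}(B_j) = z_j^-\cdot z_j^+$ (approach from $+z_j^+$), so that $\operatorname{sign}(A_j B_j) = z_j^-\cdot z_j^+$. For large but finite $T$ the energy is nonzero, so constancy of its sign in $j$, combined with $\mathbf{H}_4$, is exactly \eqref{sigmaproduct0}.

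\emph{Sufficiency: construction of an approximate orbit.} Assume \eqref{sigmaproduct0}. For each $\xi=(\xi_1,\dots,\xi_N)$ close to the equidistributed values prescribed in item (1), define $u^{T,\xi}$ by pasting the translates $\bar{u}_j(\cdot-\xi_j T)$ with smooth cut-offs localized inside each ball $B_\delta(a_j)$; condition \eqref{sigmaproduct0} together with $\mathbf{H}_4$ guarantees that adjacent heteroclinics approach $a_j$ along the same oriented eigenline, so the gluing yields a residual $R(u^{T,\xi}) := -(u^{T,\xi})'' + W_u(u^{T,\xi})$ concentrated near each $a_j$ and of order $e^{-k_0 T} e^{-T/(2\sum_h 1/\mu_h)}$ in the relevant norm.

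\emph{Lyapunov--Schmidt reduction.} Writing $u^T = u^{T,\xi} + \phi$, equation \eqref{newton} becomes $L^{T,\xi}\phi + N(\phi) = -R(u^{T,\xi})$, where $L^{T,\xi}\psi = -\psi'' + W_{uu}(u^{T,\xi})\psi$. Its near-kernel is spanned by the $N$ localized shifts $\bar{u}_j'(\cdot-\xi_j T)$, and by $\mathbf{H}_3$ a standard spectral gluing argument shows that $L^{T,\xi}$ is uniformly invertible on their orthogonal complement. A contraction mapping on that complement produces $\phi^\perp(\xi)$ with the norm bound stated in item (3); projecting the remaining equation onto the near-kernel reduces the problem to an $N$-dimensional system $F(\xi)=0$ whose leading-order expansion matches the coefficients $A_j, B_j$ of the dominant exponentials at each $a_j$. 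After the rescaling $\eta_j = e^{-\mu_j(\xi_j-\xi_{j-1})T}$, \eqref{sigmaproduct0} fixes the signs so that $F$ becomes a cyclic system to which the implicit function theorem applies, producing a unique $\xi^\ast(T)$ with $\xi_j^\ast - \xi_{j-1}^\ast \to (1/\mu_j)/\sum_h(1/\mu_h)$. Items (1)--(3) then follow from the construction and the residual estimate.

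The hardest step is the solvability of the reduced equation: the condition \eqref{sigmaproduct0} enters here in a non-perturbative way, by fixing the signs of the coefficients of the slowest exponential tails at each well; without it the linearization of $F$ at the candidate $\xi^\ast$ would be inconsistent, in complete agreement with the necessity argument. The uniform invertibility of $L^{T,\xi}$ on the transverse subspace, though technical, is a standard consequence of $\mathbf{H}_3$ since the coefficients of $L^{T,\xi}$ differ from those of the decoupled operators $L_j$ by exponentially small terms.
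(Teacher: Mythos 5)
Your necessity argument via conservation of mechanical energy is a genuinely different, and conceptually attractive, route from the paper's. The paper derives necessity from the explicit leading-order formula for the Lyapunov--Schmidt coefficients $c_j(\xi)$ (Theorem \ref{TH-c}) together with the structural identity $\sum_j(\bar q_j+p_j^\xi)c_j(\xi)=0$ (Proposition \ref{N-1=N}); you instead read the sign of $\varsigma_j=z_j^-\cdot z_j^+$ off the product $A_jB_j$ of the dominant exponential coefficients during the $j$-th dwelling phase and observe that $E=-2\mu_j^2A_jB_j+\mathrm{error}$ must be the same constant for every $j$. The idea is right, but the error control is too weak as stated: $A_jB_j$ is itself $\mathrm{O}(\mathscr{E}_j)\to0$, so an unspecified $o(1)$ could swamp the signal. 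One must show the error is $o(\mathscr{E}_j)$, which requires the two-term decay expansion \eqref{generic} (to show the secondary modes enter at rate $\hat\mu_j^\pm>\mu_j$), the cubic Taylor remainder bound $\mathrm{O}(\mathscr{E}_j^{3/2})$, and the $W^{1,2}$ closeness asserted in item (iii). Doable, but not free, and not flagged in your plan.

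The more serious gap is in the sufficiency step. After the Lyapunov--Schmidt reduction you obtain, as the paper does, $N$ scalar equations $c_j(\xi)=0$. But $c(\xi)$ depends only on the spacings $\xi_j-\xi_{j-1}$, which satisfy the constraint $\sum_j(\xi_j-\xi_{j-1})=1$; equivalently, your variables $\eta_j=e^{-\mu_j(\xi_j-\xi_{j-1})T}$ satisfy $\prod_j\eta_j^{1/\mu_j}=e^{-T}$. So the reduced system is $N$ equations in $N-1$ unknowns, and the implicit function theorem does not apply directly. The paper resolves this by Proposition \ref{N-1=N}: forming the inner product of $\mathscr{F}(\hat u^\xi)=\sum_j c_j\varphi_j^\xi$ with $\hat u_x^\xi$ and using that the integral over one period of a total derivative vanishes yields $\sum_j(\bar q_j+p_j^\xi)c_j(\xi)=0$, so one of the $N$ equations is a consequence of the other $N-1$. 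Your phrase ``a cyclic system to which the implicit function theorem applies'' glosses over precisely this point: a cyclic system $k_{j+1}\eta_{j+1}=k_j\eta_j+\text{error}_j$ for $j=1,\ldots,N$ closes up only if $\sum_j\text{error}_j$ cancels, and that compatibility relation is exactly what is missing from your write-up. It is also worth noting that the paper uses a Brouwer fixed-point argument rather than the implicit function theorem, because only magnitude bounds on the errors $r_j(\xi)$ are available, not derivative bounds; and that your construction of $u^{T,\xi}$ by cutoffs in $B_\delta(a_j)$ differs from the paper's direct double sum \eqref{uTxi} of translated heteroclinics, though both yield comparable exponential residuals.
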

In the scalar case $m=1$ the condition $z_j^-=\pm z_j^+$  is automatically satisfied. In the vector case $m>1$, {\bf{H}$_4$} should be regarded as a generic assumption. Indeed it can be shown that if $\bar{u}$ is a solution of \eqref{newton} that converges to $a\in A$, for $s\rightarrow-\infty$ or for $s\rightarrow\infty$ then, generically, the ratio $\frac{\bar{u}(s)-a}{\vert\bar{u}(s)-a\vert}$ converges to an eigenvector of $W_{uu}(a)$ that corresponds to the smallest eigenvalue of $W_{uu}(a)$. A situation where the condition $z_j^-=\pm z_j^+$ may automatically hold in the vector case is when $W$ is invariant under a group of symmetries. For instance this is the case when $W:\R^m\rightarrow\R$ is
 invariant under the Dihedral group $D_K$ of the symmetries of a regular polygon with $K$ sides and $A=\{W=0\}$ has exactly $K$ distinct elements. The situations corresponding to $z_j^-\cdot z_j^+=\pm 1$ are illustrated in Fig.\ref{fig2}.

In the scalar case $m=1$, in agreement with phase plane analysis, periodic solutions whose existence can the established via Theorem \ref{CNS} are necessarily of brake type oscillating between neighborhoods of consecutive elements of $A$. This follows from the fact that, since $A\subset\R$ is ordered, the necessary condition \eqref{sigmaproduct0} can not be satisfied if the sequence $a_1,\ldots,a_N$ includes three or more distinct elements of $A$. Assume for example that $W(u)=\frac{1}{4}u^2(1-u^2)^2$. Then $A=\{-1,0,1\}$ and $W^{\prime\prime}(\pm1)=2$, $W^{\prime\prime}(0)=\frac{1}{2}$. Phase plane analysis shows that there exist heteroclinic connections
$\bar{u}$ and $\bar{v}$ connecting $-1$ to $0$ and $0$ to $1$ respectively.
For $N=4$ consider the sequence $a_1=-1$, $a_2=0$, $a_3=1$ and $a_4=0$ with the associated sequence of connections
 $\bar{u}_1=\bar{u}$, $\bar{u}_2=\bar{v}$, $\bar{u}_3=\bar{v}(-\cdot)$ and $\bar{u}_4=\bar{u}(-\cdot)$. In this case we have in particular
 \[\begin{split}
 &z_2^-=1,\quad z_2^+=-1,\\
 &z_3^-=z_3^+=-1.
 \end{split}\]
 It follows
 \[z_2^-\cdot z_2^+\neq z_3^-\cdot z_3^+\]

 This shows that the necessary and sufficient condition formulated in Theorem \ref{CNS} is not satisfied and we conclude that the periodic orbit $u^T$ does not exist and indeed inspection of the phase plane confirms that there is no periodic orbit of (\ref{newton}) that visit in sequence $\delta$-neighborhoods of $a_1,a_2,a_3,a_4$. Note also that this example shows that the  product $\varsigma_j:=z_j^-\cdot z_j^+$ can assume both values $1$ and $-1$.

\begin{figure}
  \begin{center}
\begin{tikzpicture}
\draw [blue, thick][->](3.5,0)--(6,0);
\draw [blue, thick][->](10.5,0)--(13,0);

\draw[red, fill] (3.5,0) circle [radius=0.05];
\draw[red, fill] (10.5,0) circle [radius=0.05];

\draw [blue, thick](3.5,0) arc [radius=5, start angle=-90, end angle=-60];
\draw [blue, thick](3.5,0) arc [radius=5, start angle=90, end angle=60];

\draw [blue, thick](10.5,0) arc [radius=5, start angle=-90, end angle=-60];
\draw [blue, thick](10.5,0) arc [radius=5, start angle=-90, end angle=-120];

\draw [blue] [->](5.8,.56)--(5.9,.62);
\draw [blue] [<-](5.8,-.56)--(5.9,-.62);

\draw [blue] [->](12.8,.56)--(12.9,.62);
\draw [blue] [->](8.15,.6)--(8.25,.54);

 \node at (6.8,0){$z_j^-=z_j^+$};
 \node at (13.95,0){$z_j^-=-z_j^+$};
  \node at (5.3,-.7){$\bar{u}_{j-1}$};
   \node at (5.3,.7){$\bar{u}_j$};
    \node at (3.5,-.5){$a_j$};
     \node at (10.5,-.5){$a_j$};

  \node at (12.3,.7){$\bar{u}_j$};
   \node at (8.7,.7){$\bar{u}_{j-1}$};

\node at (3.5,-2){a)};
\node at (10.5,-2){b)};

\end{tikzpicture}
\end{center}

\caption{a) $z_j^-\cdot z_j^+=1$;\quad b) $z_j^-\cdot z_j^+=-1$.}
\label{fig2}
\end{figure}
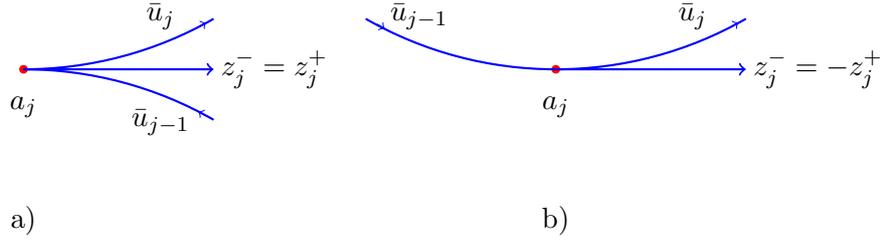

Assume now $m\geq 2$ and that $W:\R^m\rightarrow\R$ satisfies $0=W(a)<W(u)$ for $a\in\{a_1,a_2\}$, $u\in\R^m\setminus\{a_1,a_2\}$ with $a_1\neq a_2$ nondegenerate zeros of $W$ and assume that there is a nondegenerate connection $\bar{u}$ connecting $a_1$ to $a_2$. Consider the sequence $a_1$, $a_2$ ($N=2$) and the associate sequence of connections $\bar{u}_1=\bar{u}$, $\bar{u}_2=\bar{u}(-\cdot)$. In this case  it results
\[z_1^-=z_1^+,\quad z_2^-=z_2^+,\]
and Theorem \ref{CNS} implies the existence of a periodic orbit $u^T$ that, for $T$ sufficiently large  alternates between $\delta$-neighborhoods of $a_1$ and $a_2$.
Beside this simple situation that extends to the case $m\geq 2$ well known results \cite{cp1}, \cite{fh} valid for the scalar case,
in the vector case $m\geq 2$, depending on the sign of the products in \eqref{sigmaproduct0} and on the structure of the set of connections between the elements of $A$, various type of periodic solutions of \eqref{newton} can be deduced from
Theorem \ref{CNS}. Even in the case where $A=\{a_-,a_+\}$ has only two elements, if it happens \cite{albechen} that there are two
distinct orbits connecting $a_-$ to $a_+$, say $\bar{u}\neq\bar{v}$ that satisfy the condition
\begin{equation}
\lim_{s\rightarrow\pm\infty}\frac{\bar{u}(s)-a_\pm}{\vert \bar{u}(s)-a_\pm\vert}
=\lim_{s\rightarrow\pm\infty}\frac{\bar{v}(s)-a_\pm}{\vert \bar{v}(s)-a_\pm\vert}=z^\pm,
\label{cond-2}
\end{equation}
Theorem \ref{CNS} implies the existence of infinitely many periodic solutions of \eqref{newton} with different structure. Indeed \eqref{cond-2} implies that, independently of the path chosen to travel from $a_-$ to $a_+$ and viceversa, the value of the products in \eqref{sigmaproduct0} is always $1$. Therefore if we represent the connections $\bar{u},\bar{u}(-\cdot),\bar{v},\bar{v}(-\cdot)$ with the four symbols $+p,-p,+q,-q$, then Theorem \ref{CNS} implies the existence of a periodic solution for each sequence $\{\omega_h\}_{h=1}^N$ which has $N$ even and
satisfies
\begin{equation}
\omega_h\in\{ +p,-p,+q,-q\},\quad \text{sign}(\omega_h)\text{sign}(\omega_{h+1})=-1,\;\;h=1,\ldots.
\label{con-3}
\end{equation}
In this situation we can even conjecture that, as limits of sequences of periodic solutions corresponding to sequences $\{\omega_h\}_{h=1}^N$ with $N\rightarrow\infty$, there exist chaotic aperiodic solutions of \eqref{newton} associated to infinite sequences $\{\omega_h\}_{h=1}^\infty$ that satisfy \eqref{con-3}.
Similar considerations apply when the graph with vertices the elements of $A$ and edges the connections among them has a connected component that contains three or more elements of $A$. This happens, for instance, if $\sharp A$ is odd.

We denote ${\mu_j\pm}^2$ the eigenvalue of $W_{uu}(a_j)$ associated to eigenvectors $z_j^\pm$ in \eqref{lim-z+-z-}.
From {\bf{H}$_2$} and \eqref{lim-z+-z-} it follows, see Lemma \ref{w}, that there exist maps $\bar{w}_j^\pm$ and constants $\bar{K}_j^\pm>0$, $K_j^\pm>0$ and ${\hat{\mu}_j}^->{\mu_j}^-$, ${\hat{\mu}_j}^+>{\mu_{j+1}^+}$ such that
\begin{equation}\label{generic}
\begin{split}
&\bar{u}_j(s)-a_j=\bar{K}_j^-z_j^-e^{{\mu_j^-}s}+\bar{w}_j^-(s),\;\;s\leq 0,\\
&\bar{u}_j(s)-a_{j+1}=\bar{K}_j^+z_{j+1}^+e^{-{\mu_{j+1}^+}s}+\bar{w}_j^+(s),\;\;s\geq 0,\\\\
%\end{split}
%\end{equation}
%where $\mu_j^2$ is the eigenvalue of $W_{uu}(a_j)$ associated to $z_j$, $\bar{K}_j^\pm>0$ a constant and %$\bar{w}_j^\pm$ a correction that satisfies
%\begin{equation}\label{generic1}
%\begin{split}
&\vert\bar{w}_j^-(s)\vert,\vert D^k\bar{w}_j^-(s)\vert\leq K_j^-e^{{\hat{\mu}_j}^-s},\;\;s\leq 0,\;k=1,2,3,\\
&\vert\bar{w}_j^+(s)\vert,\vert D^k\bar{w}_j^+(s)\vert\leq K_j^+e^{-{\hat{\mu}_j}^+s},\;\;s\geq 0,\;k=1,2,3.
\end{split}
\end{equation}
If also {\bf{H}$_4$} is satisfied \eqref{generic}$_1$ becomes
\[\begin{split}
&\bar{u}_j(s)-a_j=\bar{K}_j^-z_j^-e^{{\mu_j}s}+\bar{w}_j^-(s),\;\;s\leq 0,\\
&\bar{u}_j(s)-a_{j+1}=\bar{K}_j^+z_{j+1}^+e^{-{\mu_{j+1}}s}+\bar{w}_j^+(s),\;\;s\geq 0,\\\\
\end{split}\]
where $\mu_j^2$ is the eigenvalue of $W_{uu}(a_j)$ associated to the eigenvectors $z_j^-$ and $z_j^+$.

The exponential decay of $\bar{u}_j$ in \eqref{generic} suggests that by patching together translates of the maps $\bar{u}_j-a_j$ and $\bar{u}_j-a_{j+1}$, $j=1,\ldots,N$ we can construct an $N$-dimensional manifold of periodic maps that are approximate solutions of (\ref{newton}) in the sense that satisfy (\ref{newton}) up to an exponentially small error that depends on the size of the translations. The idea is then to construct exact periodic solutions of (\ref{newton}) by small perturbations of maps on the approximate manifold. Since the period of the sought periodic solutions is unknown we prefer to rescale equation (\ref{newton}) by the change of variable $t=\frac{x}{\epsilon}$, $\epsilon>0$ small and look for periodic solutions of period $1$ for the rescaled equation
\begin{equation}
\epsilon^2u_{xx}=W_u(u).
\label{rescaled}
\end{equation}
Set $\mu_j=\frac{1}{2}(\mu_j^-+\mu_j^+)$. For $\rho>0$ small, we define
\begin{equation}
\Xi_\rho:=\Big{\{}\xi\in\R^N:\, 0\leq\xi_1,\;\, \xi_j-\xi_{j-1}>\frac{\rho}{\mu_j},\; j=1,\ldots,N,\;\, \xi_0=\xi_N-1<\xi_1\Big{\}}
\label{basic}
\end{equation}
and focus on the manifold $\mathcal{M}=\{u^\xi:\xi\in\Xi_\rho\}$ where $u^\xi$ is defined by
\begin{equation}
u^{\xi}(x)=\displaystyle\sum_{n<0}\sum_{h=1}^N \Big( \bar{u}_h( \frac{x-n-\xi_h}{\eps})-a_{h+1} \Big)+a_1
+\sum_{n\geq 0}\sum_{h=1}^N \Big( \bar{u}_h( \frac{x-n-\xi_h}{\eps})-a_h \Big).
\label{uxi}
\end{equation}
$\mathcal{M}$ is a manifold of $1$-periodic maps which, in each interval $(\hat{\xi}_{j-1}+n,\hat{\xi}_j+n)$, $n\in\Z$ ($\;\hat{\xi}_j=\frac{\xi_{j+1}+\xi_j}{2}$, $j=1,\ldots,N$;\, $\xi_{N+1}=\xi_1+1$), aside from an error of $\mathrm{O}(e^{-\frac{\alpha\rho}{\epsilon}})$, $\alpha>0$ a constant independent of $\epsilon$, coincide with $\bar{u}_j( \frac{x-n-\xi_j}{\eps})$. Since $\bar{u}_j( \frac{x-n-\xi_j}{\eps})$ is a solution of \eqref{rescaled} it follows, see also Lemma \ref{est-Fuxi}
\begin{equation}
\mathscr{F}(u^\xi):=\epsilon^2u_{xx}^\xi-W_u(u^\xi)=\mathrm{O}(e^{-\frac{\alpha\rho}{\epsilon}}).
\label{beta-error}
\end{equation}
In the following $\alpha>0$ and $C>0$ denote generic constants independent of $\epsilon>0$ and $\xi\in\Xi_\rho$.
\begin{remark}
If we set back $x=\frac{t}{T}$ with $T=\frac{1}{\epsilon}$ in the definition \eqref{uxi} of $u^\xi$, the rescaled
version of $u^\xi$
\begin{equation}
u^{T,\xi}(t)=\displaystyle\sum_{n<0}\sum_{h=1}^N \Big( \bar{u}_h(t-nT-\xi_hT)-a_{h+1} \Big)+a_1
+\sum_{n\geq 0}\sum_{h=1}^N \Big( \bar{u}_h(t-nT-\xi_hT)-a_h \Big),
\label{uTxi}
\end{equation}
is a $T$-periodic map and $\{u^{T,\xi}:\xi\in\Xi_\rho\}$ is a manifold of $T$-periodic maps.
\end{remark}
If we set $u=u^\xi+v$ in \eqref{rescaled} the problem of the existence of periodic solutions with the sought properties becomes the problem of finding $\xi\in\Xi_\rho$ and a $1$-periodic map $v:\R\rightarrow\R^m$ that solve the equation $\epsilon^2(u_{xx}^\xi+v_{xx})=W_u(u^\xi+v)$. The estimate \eqref{beta-error} suggests to rewrite this equation as a weakly nonlinear problem
\begin{equation}
\epsilon^2v_{xx}-W_{uu}(u^\xi)v=W_u(u^\xi)-\epsilon^2u_{xx}^\xi+N^\xi(v)=\mathrm{O}(e^{-\frac{\alpha\rho}{\epsilon}})+N^\xi(v),
\label{rescaled1}
\end{equation}
where $N^\xi(v)=W_u(u^\xi+v)-W_u(u^\xi)-W_{uu}(u^\xi)v$ is quadratic in $v$. The difficulty with this equation is that the linear operator $L^\xi v=-\epsilon^2v_{xx}+W_{uu}(u^\xi)v$ is almost singular. Indeed the $0$ eigenvalue of $L_j$, $j=1,\ldots,N$  manifests itself in the existence of $N$ exponentially small eigenvalues of $L^\xi$ with corresponding eigenspace $X^\xi$ approximately spanned by $u_{\xi_1}^\xi,\ldots,u_{\xi_N}^\xi$. More precisely we have  $X^\xi=\mathrm{Span}(\varphi_1,\ldots,\varphi_N)$ where  $\varphi_j=u_{\xi_j}^\xi/{\|u_{\xi_j}^\xi\|}+\mathrm{O}(e^{-\frac{\alpha\rho}{\epsilon}})$, $j=1,\ldots,N$ is an  orthonormal basis for $X^\xi$ and $\|\cdot\|$ is the norm in $L^2((0,1);\R^m)$. To overcome the problem of the singularity of $L^\xi$ on $X^\xi$, a natural approach is, in the spirit of the classical method of Lapunov-Smidth and bifurcation theory, to solve \eqref{rescaled1} in two steps. In the first step we solve the projection of \eqref{rescaled1} on the orthogonal complement $X^{\xi,\perp}$ of $X^\xi$:
\begin{equation}
-{L}^\xi v=\sum_j\varphi_j^\xi c_j(\xi,v)+N^\xi(v)+W_u(u^\xi)-\epsilon^2u_{xx}^\xi,
\label{quasiinv0}
\end{equation}
where $v\in X^{\xi,\perp}$ and $c(\xi,v)\in\R^N$ is a Lagrange multiplier determined by the condition that the right hand side of \eqref{quasiinv0} be in $X^{\xi,\perp}$. We find (Proposition \ref{quasiinv1}) that, for each $\xi\in\Xi_\rho$ there are unique $v^\xi\in X^{\xi,\perp}$, $v^\xi=\mathrm{O}(e^{-\frac{\alpha\rho}{\epsilon}})$ and $c(\xi)=c(\xi,v^\xi)\in\R^N$ that solve \eqref{rescaled1}. The second step in the proof of Theorem \ref{CNS} is the analysis of the bifurcation equation
\[c(\xi)=0.\]

Under the assumptions of Theorem \ref{CNS} and in particular if {\bf{H}$_4$} holds, we prove that (see Theorem \ref{TH-c} and  \eqref{c0-gen})

\begin{equation}
\begin{split}
&c_j(\xi)=\frac{2\epsilon^\frac{1}{2}}{\bar{q}_j}\Big(\varsigma_{j+1}k_{j+1}\mathscr{E}_{j+1}
-\varsigma_jk_j\mathscr{E}_j
+\mathrm{O}(e^{-\frac{\alpha\rho}{\epsilon}}\max_h\mathscr{E}_h\Big),\;\;j=1,\ldots,N,\\
&\mathscr{E}_j=e^{-\frac{{\mu_j}}{\eps}(\xi_j-\xi_{j-1})},
\end{split}
\label{scripC0}
\end{equation}
where $k_j=\mu_j^2\bar{K}_j^-\bar{K}_{j-1}^+$,  $\varsigma_j=z_j^-\cdot z_j^+$ and $\bar{q}_j^2=J(\bar{u}_j)=\int_\R\vert\bar{u}_j^\prime\vert^2ds$ is the energy of the connection $\bar{u}_j$.

On the basis of this estimate the necessity of \eqref{sigmaproduct0} is quite obvious. Indeed from \eqref{scripC0}, if $\mathscr{E}_j=\max_h\mathscr{E}_h$, $c_j(\xi)=0$ is possible only if
\[\varsigma_j=\varsigma_{j+1},\;\;\text{and}\;\;\mathscr{E}_{j+1}\approx\frac{k_j}{k_{j+1}}\mathscr{E}_j.\]
On the other hand if $\bar{\xi}$ is such that $\mathscr{E}_{j+1}=\frac{k_j}{k_{j+1}}\mathscr{E}_j$, the exponential structure of $\mathscr{E}_h$, $h=1,\ldots,N$, and an application of Brouwer fixed point theorem show, see Theorem \ref{per-exists}, that there is a small perturbation $\xi$ of $\bar{\xi}$ that solves the bifurcation equation completing the proof of Theorem \ref{CNS}.
\vskip.3cm
Next we focus on the dynamics of the parabolic equations
\begin{equation}
\begin{split}
&u_t=\mathscr{F}(u),\;\;x\in(0,1),\;u\in W^{1,2},\\
&\mathscr{F}(u)=\epsilon^2u_{xx}-W_u(u),
\end{split}
\label{parabolic0}
\end{equation}
(with periodic boundary conditions)
in a neighborhood of the manifold $\mathcal{M}=\{{u}^\xi:  \xi\in\Xi_\rho\}$.
We observe that with $v=v^\xi$, \eqref{quasiinv0} is actually equivalent to
\[\mathscr{F}(\hat{u}^\xi)=\sum_j\varphi_j^\xi c_j(\xi),\]
where we have set $\hat{u}^\xi=u^\xi+v^\xi$, $\xi\in\Xi_\rho$.
Therefore we can formally write
\[u_t\vert_{u=\hat{u}^\xi}=\sum_{j=1}^Nc_j(\xi)\varphi_j^\xi.\]
This indicates that $c(\xi)$ should play an important role in characterizing the dynamics of \eqref{parabolic0}  near $\mathcal{M}$ and, since maps near $\mathcal{M}$ have a layered structure, $c(\xi)$ should be related to layers dynamics. Our first result is based on the fact that \eqref{parabolic0} is the $L^2$-gradient system associated to the action functional
\begin{equation}
J_\eps(u)=\int_0^1\Big(\epsilon^2\vert u_x\vert^2+W(u)\Big)dx,\;\, u\in W^{1,2}((0,1);\R^N),
\label{eps-action}
\end{equation}
and depends on the geometric structure of the graph of $J_\eps$ near $\mathcal{M}$. We decompose $u$ near $\mathcal{M}$ in the form (see Lemma \ref{Projct})
\[\begin{split}
&u={u}^\xi+w,\\
&\langle w,u_{\xi_j}^\xi\rangle=0,\;\;j=1,\ldots,N.
\end{split}\]
 By applying Theorem 2.1  in \cite{BFK}, we prove that, if the initial condition $u_0=u^{\xi_0}+w_0$ is sufficiently close to $\mathcal{M}$
and if the difference $J_\eps(u_0)-\sup_{\xi\in\Xi_\rho}J_\eps(u^\xi)$ is sufficiently small, then the solution $u(t,u_0)$ of \eqref{parabolic0} creeps along $\mathcal{M}$ for a very long time. We use the weighted $W^{1,2}$ norm \[\|v\|^2_{W_\eps^{1,2}}=\|v\|^2+\epsilon^2\|v_x\|^2.\]
\begin{theorem}
\label{stay-near}
Assume $\bf{H}_1-\bf{H}_3$ and assume that $u_0={u}^{\xi_0}+w_0$, $\xi_0\in\Xi_\rho$,  satisfies
\begin{enumerate}
\item \[\begin{split}
&\langle w_0,u_{\xi_j}^{\xi_0}\rangle=0,\;\;j=1,\ldots,N,\\
&\|w_0\|_{W_\eps^{1,2}}=\mathrm{O}(\epsilon^\frac{3}{2}),\\
&\inf_{\zeta\in\partial\Xi_\rho}\|u^{\xi_0}-u^\zeta\|=d,
\end{split}\]
with $d>0$ independent of $\epsilon$.
\item
\[J_\eps(u_0)-\sup_{\xi\in\Xi_\rho}J_\eps({u}^\xi)=\mathrm{O}(e^{-\frac{\alpha\rho}{\eps}}).\]
\end{enumerate}
Then there is $T>0$ such that the solution $t\rightarrow u(t,u_0)$ of \eqref{parabolic0} through $u_0$ can be decomposed in the form
\[\begin{split}
&u(t,u_0)={u}^{\xi(t)}+w(t),\;\;t\in[0,T),\\
&\langle w(t),u_{\xi_j}^{\xi(t)}\rangle=0,\;\;j=1,\ldots,N,
\end{split}\]
 and
\[\|w(t)\|_{W_\eps^{1,2}}=\mathrm{O}(e^{-\frac{\alpha\rho}{\eps}}),\;\;t\in[0,T).\]
Moreover
either $T=+\infty$ and $\xi(t)\in\Xi_\rho$ for all $t>0$ or $T=\mathrm{O}(e^{\frac{\alpha\rho}{\eps}}d^2)$ and $\xi(T)\in\partial\Xi_\rho$.
\end{theorem}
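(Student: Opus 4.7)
The plan is to verify the hypotheses of Theorem 2.1 of \cite{BFK} on a tubular neighborhood of $\M$, combining a projection onto $\M$, a coercivity estimate for the Hessian of $J_\eps$ on the normal bundle, and the gradient-flow energy identity. First, use Lemma \ref{Projct} to obtain, in a tubular neighborhood of $\M$, a smooth decomposition $u = u^\xi + w$ with $\xi \in \Xi_\rho$ and $\langle w, u_{\xi_j}^\xi\rangle = 0$, $j=1,\ldots,N$. Let $T$ be the supremum of times for which the solution $u(\cdot,u_0)$ of \eqref{parabolic0} remains inside this tube and its projected coordinate lies in $\Xi_\rho$. On $[0,T)$ define $\xi(t)$ and $w(t)$ by this decomposition; they are $C^1$ in $t$ and start at $(\xi_0,w_0)$.

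Next, control the normal component. Taylor expand
\[
J_\eps(u^\xi + w) = J_\eps(u^\xi) + \langle W_u(u^\xi) - \eps^2 u_{xx}^\xi, w\rangle + \tfrac{1}{2}\langle L^\xi w, w\rangle + R^\xi(w),
\]
with $R^\xi(w)$ cubic and $\|W_u(u^\xi) - \eps^2 u_{xx}^\xi\| = \mathrm{O}(e^{-\alpha\rho/\eps})$ by \eqref{beta-error}. Since $w(t) \in X^{\xi(t),\perp}$ up to an exponentially small error, and $L^\xi$ is coercive on $X^{\xi,\perp}$ with a spectral gap bounded below uniformly in $\xi \in \Xi_\rho$ (the quasi-invertibility exploited in Proposition \ref{quasiinv1}, itself a consequence of {\bf H$_3$} combined with \eqref{generic}), one obtains
\[
J_\eps(u^\xi + w) - J_\eps(u^\xi) \geq c\,\|w\|^2_{W_\eps^{1,2}} - C e^{-\alpha\rho/\eps}\|w\|_{W_\eps^{1,2}} - C\|w\|^3_{W_\eps^{1,2}}.
\]
Combining the gradient-flow identity $\frac{d}{dt}J_\eps(u(t)) = -\|u_t\|^2\leq 0$, hypothesis (ii), and the fact that $\sup_{\Xi_\rho} J_\eps(u^\xi) - J_\eps(u^{\xi(t)}) = \mathrm{O}(e^{-\alpha\rho/\eps})$ (again a consequence of \eqref{beta-error}), one concludes $\|w(t)\|_{W_\eps^{1,2}} = \mathrm{O}(e^{-\alpha\rho/\eps})$ for $t\in[0,T)$. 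This is precisely the regime of Theorem 2.1 of \cite{BFK}, and it guarantees that the solution cannot leave the tube through the normal direction, so $T<\infty$ forces $\xi(T) \in \partial\Xi_\rho$.

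Finally, estimate the lifespan. Differentiating the orthogonality $\langle w(t), u_{\xi_j}^{\xi(t)}\rangle = 0$ and using $u_t = \sum_j u_{\xi_j}^\xi \dot\xi_j + w_t$ together with the uniform positive-definiteness on $\Xi_\rho$ of the Gram matrix $(\langle u_{\xi_i}^\xi, u_{\xi_j}^\xi\rangle)_{ij}$ yields $|\dot\xi(t)| \leq C\|u_t(t)\|$. By Cauchy--Schwarz and energy dissipation,
\[
|\xi(T) - \xi_0|^2 \leq CT\int_0^T \|u_t\|^2\,ds = CT\bigl(J_\eps(u_0) - J_\eps(u(T))\bigr) \leq C' T e^{-\alpha\rho/\eps}.
\]
Since the $L^2$ distance on $\M$ dominates a constant times the parameter distance on $\Xi_\rho$, the hypothesis $\inf_{\zeta\in\partial\Xi_\rho}\|u^{\xi_0}-u^\zeta\|=d$ forces $|\xi(T) - \xi_0| \geq d/C''$ whenever $\xi(T) \in \partial\Xi_\rho$, and the previous display gives $T$ at least of order $d^2 e^{\alpha\rho/\eps}$. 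Otherwise $T = +\infty$ and $\xi(t) \in \Xi_\rho$ for all $t$.

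The main obstacle is the uniform spectral gap for $L^\xi$ on $X^{\xi,\perp}$: the $N$ almost-zero eigenvalues must be separated from the rest of the spectrum by a distance bounded below independently of $\xi\in\Xi_\rho$, even as adjacent layers become widely spaced. This is ensured by {\bf H$_3$}, which makes each limiting operator $L_j$ have a simple zero eigenvalue with a uniform gap, combined with \eqref{generic} to bound the off-diagonal coupling between layers by an exponentially small quantity controlled by $\rho$. Once this uniform coercivity is in hand, steps two and three are soft applications of energy dissipation and \cite{BFK}.
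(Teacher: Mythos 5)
Your argument for the main part of the theorem is essentially the paper's: decompose $u=u^\xi+w$ via Lemma \ref{Projct}, Taylor-expand $J_\eps$ about $u^\xi$ with linear/quadratic/cubic terms, use the coercivity of the Hessian on the normal slice (the paper's Lemma \ref{w-Lw-LwLw}, which handles the discrepancy between $X^{\xi,\perp}$ and $\mathrm{span}(u_{\xi_j}^\xi)^\perp$ that you acknowledge), and invoke Theorem 2.1 of \cite{BFK}. That part is fine.

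The lifespan estimate is where you deviate from the paper, and your version as written has a logical slip. You argue: $L^2$ distance on $\M$ dominates a constant times parameter distance, so from $\|u^{\xi_0}-u^\zeta\|\geq d$ one gets $|\xi(T)-\xi_0|\geq d/C''$. That implication runs the wrong way: dominance $\|u^\zeta-u^\xi\|\geq c\,|\zeta-\xi|$ gives an upper bound on $|\zeta-\xi|$ from a lower bound on $\|u^\zeta-u^\xi\|$, not a lower bound. What one actually needs is the reverse Lipschitz estimate $\|u^\zeta-u^\xi\|\leq\frac{C}{\eps^{1/2}}|\zeta-\xi|$ (which follows from $\|u_{\xi_j}^\xi\|\approx\bar q_j/\eps^{1/2}$), giving $|\xi(T)-\xi_0|\geq\frac{\eps^{1/2}}{C}\,d$, not $d/C''$ with $C''$ independent of $\eps$. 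At the same time, the Gram matrix $(\langle u_{\xi_i}^\xi,u_{\xi_j}^\xi\rangle)$ has diagonal entries of size $1/\eps$, so the correct consequence of the orthogonality relation is $|\dot\xi|\leq C\eps^{1/2}\|u_t\|$, not $|\dot\xi|\leq C\|u_t\|$. When both corrections are made the missing $\eps^{1/2}$ factors cancel and you land on $T\gtrsim d^2 e^{\alpha\rho/\eps}$ anyway, so your final answer is right by accident, but the intermediate claim $|\xi(T)-\xi_0|\geq d/C''$ is false as stated. The cleaner route, which the paper takes via Theorem 2.2 of \cite{BFK}, is to stay in $L^2$: bound $\int_0^T\|u_t\|\,dt$ below by $d_{L^2}(u_0,\partial\M)$ minus the tube width, and above by $\sqrt{T}\sqrt{J_\eps(u_0)-J_\eps(u(T))}$, avoiding any parameter/$L^2$ translation entirely.
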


Based on the exponential estimate for $w(t)$ in Theorem \ref{stay-near} and on some ideas from \cite{cp} we prove
that indeed $c(\xi)$ determines layers dynamics. As before we let $u(t,u_0)=u^{\xi(t)}+w(t)$, $\langle w(t),u_{\xi_j}^{\xi(t)}\rangle=0,\;\;j=1,\ldots,N$, $t\in[0,T)$, $T$ as in Theorem \ref{stay-near}, be the decomposition of the solution $u(t,u_0)$ of \eqref{parabolic0} with initial condition $u_0=u^{\xi_0}+w_0$.

\begin{theorem}
\label{dynamic}
Assume $\bf{H}_1-\bf{H}_3$.  Then there exist $\epsilon_0>0$ and constants $Q>0$, $\beta>1$  independent of $\xi\in\Xi_\rho$ and $\epsilon\in(0,\epsilon_0]$ and such that, if $u_0$ is sufficiently close to $\mathcal{M}$ in the sense that

\begin{equation}
\|w_0\|_{W_\eps^{1,2}}\leq Q\|\mathscr{F}(u^{\xi_0})\|,
\label{c0-nonzero}
\end{equation}

Then, for each $t\in[0,T)$, $T>0$ as in Theorem \ref{stay-near}, we have $u(t,u_0)=u^{\xi(t)}+w(t)$ and

\begin{equation}
\begin{split}
&\dot{\xi}_j=\frac{\epsilon^\frac{1}{2}}{\bar{q}_j}c_j(\xi)
+\mathrm{O}(e^{-\frac{\alpha\rho}{\eps}}\max_{h,\pm}\mathscr{E}_h^\pm),\;\;j=1,\ldots,N,\\
&\|w(t)\|_{W_\eps^{1,2}}\leq \beta Q\|\mathscr{F}(u^{\xi(t)})\|.
\end{split}
\label{dotxi=c}
\end{equation}
\end{theorem}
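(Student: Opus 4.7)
The plan is to differentiate the decomposition $u(t)=u^{\xi(t)}+w(t)$ in time, extract a closed coupled system for $\dot\xi$ and $w_t$, identify the leading part of $\dot\xi$ with $c(\xi)$ using the bifurcation analysis summarized in \eqref{scripC0}, and then close a continuation argument that propagates the pointwise bound on $\|w\|_{W_\eps^{1,2}}$.

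Substituting $u=u^\xi+w$ into $u_t=\mathscr{F}(u)$ and using $\mathscr{F}(u^\xi+w)=\mathscr{F}(u^\xi)-L^\xi w-N^\xi(w)$ together with $u_t=\sum_j\dot\xi_j u^\xi_{\xi_j}+w_t$, one obtains
\[
w_t=\mathscr{F}(u^\xi)-\sum_{j=1}^N\dot\xi_j\,u^\xi_{\xi_j}-L^\xi w-N^\xi(w).
\]
Differentiating the constraint $\langle w,u^\xi_{\xi_k}\rangle=0$ in $t$ and combining it with the inner product of the $w$-equation against $u^\xi_{\xi_k}$ produces a linear system $\mathcal{A}(\xi,w)\dot\xi=b(\xi,w)$, where
\[
\mathcal{A}_{kj}=\delta_{kj}\|u^\xi_{\xi_k}\|^2+(1-\delta_{kj})\langle u^\xi_{\xi_j},u^\xi_{\xi_k}\rangle-\langle w,u^\xi_{\xi_j\xi_k}\rangle,
\]
and $b_k=\langle\mathscr{F}(u^\xi),u^\xi_{\xi_k}\rangle-\langle L^\xi w,u^\xi_{\xi_k}\rangle-\langle N^\xi(w),u^\xi_{\xi_k}\rangle$. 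Since $\langle u^\xi_{\xi_j},u^\xi_{\xi_k}\rangle=\mathrm{O}(e^{-\alpha\rho/\eps})$ for $j\ne k$ while $\|u^\xi_{\xi_k}\|^2=\bar q_k^2/\eps+\mathrm{O}(e^{-\alpha\rho/\eps}/\eps)$, the matrix $\mathcal{A}$ is strictly diagonally dominant as soon as $\|w\|_{W_\eps^{1,2}}$ is sufficiently small, and $\mathcal{A}^{-1}$ is uniformly bounded.

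The leading term of $\dot\xi_k$ comes from $\langle\mathscr{F}(u^\xi),u^\xi_{\xi_k}\rangle$. Self-adjointness of $L^\xi$ gives $\langle L^\xi w,u^\xi_{\xi_k}\rangle=\langle w,L^\xi u^\xi_{\xi_k}\rangle$, and since $u^\xi_{\xi_k}$ is an approximate kernel element of $L^\xi$ one has $\|L^\xi u^\xi_{\xi_k}\|=\mathrm{O}(e^{-\alpha\rho/\eps})$, so that contribution is absorbed in the error. From \eqref{quasiinv0} with $v=v^\xi\in X^{\xi,\perp}$ of order $e^{-\alpha\rho/\eps}$, together with $\varphi_k^\xi=u^\xi_{\xi_k}/\|u^\xi_{\xi_k}\|+\mathrm{O}(e^{-\alpha\rho/\eps})$, one obtains
\[
\langle\mathscr{F}(u^\xi),u^\xi_{\xi_k}\rangle=c_k(\xi)\|u^\xi_{\xi_k}\|+\mathrm{O}\bigl(e^{-\alpha\rho/\eps}\max_{h,\pm}\mathscr{E}_h^\pm\bigr).
\]
Using $\|u^\xi_{\xi_k}\|=\eps^{-1/2}\bar q_k(1+\mathrm{O}(e^{-\alpha\rho/\eps}))$ (from the change of variable $y=(x-\xi_k)/\eps$ in $u^\xi_{\xi_k}=-\eps^{-1}\bar u_k'(\cdot)+$ exponentially small correction) and inverting the diagonally dominant $\mathcal{A}$ then yields the stated formula for $\dot\xi_j$.

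The main obstacle is the propagation of $\|w(t)\|_{W_\eps^{1,2}}\le\beta Q\|\mathscr{F}(u^{\xi(t)})\|$, since the right-hand side depends on the evolving $\xi(t)$. I would run a continuation argument: let $T^*\le T$ be the first time at which the bound with a fixed $\beta>1$ is saturated and prove $T^*=T$. On $[0,T^*)$, using the orthogonality of $w$ to the tangent directions to kill the $\sum_j\dot\xi_j u^\xi_{\xi_j}$ contribution in $\langle w,w_t\rangle$, and the spectral gap $\langle L^\xi w,w\rangle\ge\lambda\|w\|_{W_\eps^{1,2}}^2$ on $X^{\xi,\perp}$ (with $\lambda>0$ independent of $\eps$, a consequence of $\mathbf{H}_3$ and the construction of $X^\xi$), one obtains
\[
\tfrac{1}{2}\tfrac{d}{dt}\|w\|_{W_\eps^{1,2}}^2\le -\lambda\|w\|_{W_\eps^{1,2}}^2+C\|\mathscr{F}(u^\xi)\|^2+C'\|w\|_{W_\eps^{1,2}}^3.
\]
Combined with the Lipschitz dependence of $\|\mathscr{F}(u^\xi)\|$ on $\xi$ (itself controlled by $|\dot\xi|$, hence by $\|\mathscr{F}(u^\xi)\|$ under the bootstrap hypothesis), this closes the argument for $\beta$ sufficiently large. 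The adaptation of the Carr--Pego estimate from \cite{cp} to the vector-valued setting and to the $N$-dimensional manifold $\mathcal{M}$ is where the bulk of the technical work sits.
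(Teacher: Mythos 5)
Your derivation of the equation for $\dot\xi$ tracks the paper closely and is essentially correct: differentiating the orthogonality constraint, inserting the evolution equation for $w$, inverting the near-diagonal matrix, using $L^\xi u^\xi_{\xi_k}=-\mathscr{F}_{\xi_k}(u^\xi)$ for the $\langle L^\xi w,u^\xi_{\xi_k}\rangle$ term, and relating $\langle\mathscr{F}(u^\xi),u^\xi_{\xi_k}\rangle$ to $c_k(\xi)$ via $\varphi_k^\xi=u^\xi_{\xi_k}/\|u^\xi_{\xi_k}\|+\eta_k^\xi$ is the right route and matches the paper's steps \eqref{iden-w}--\eqref{xidot}.

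The propagation of the $w$-bound, however, has a genuine gap. The differential inequality you claim,
$\tfrac{1}{2}\tfrac{d}{dt}\|w\|_{W_\eps^{1,2}}^2\le -\lambda\|w\|_{W_\eps^{1,2}}^2+C\|\mathscr{F}(u^\xi)\|^2+C'\|w\|_{W_\eps^{1,2}}^3$,
does not follow from the ingredients you list. First, the orthogonality $\langle w,u^\xi_{\xi_j}\rangle=0$ kills the $\sum_j\dot\xi_j u^\xi_{\xi_j}$ term only in the $L^2$ pairing $\langle w,w_t\rangle$; the derivative of $\|w\|_{W_\eps^{1,2}}^2$ also contains $\epsilon^2\langle w_x,w_{tx}\rangle$, and $\langle w_x,\sum_j\dot\xi_j\,u^\xi_{\xi_j x}\rangle$ is not annihilated by that constraint. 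Second, and more seriously, $\epsilon^2\langle w_x,w_{tx}\rangle$ produces the cross term $-\epsilon^2\langle w_x,W_{uuu}(u^\xi)u^\xi_x\,w\rangle$, which, using $\vert u^\xi_x\vert\sim\epsilon^{-1}$, is bounded only by a constant times $\epsilon\|w_x\|\,\|w\|\le C\|w\|^2_{W_\eps^{1,2}}$ with $C$ a constant determined by $W$ (not small); this competes at the same order as $-\lambda\|w\|^2_{W_\eps^{1,2}}$ and can easily dominate it. This is exactly why the paper does not track $\|w\|_{W_\eps^{1,2}}^2$ but instead tracks $\langle w,L^\xi w\rangle$ (Lemma \ref{w-Lw-LwLw}, equation \eqref{wLw-eq}): differentiating this quadratic form produces $-\|L^\xi w\|^2$ as the dissipation (strictly stronger than $-\lambda\langle w,L^\xi w\rangle$ by \eqref{w-Lw-LwLw1}$_2$), and the time-dependence of $L^\xi$ contributes $\tfrac{1}{2}\int W_{uuu}(u^\xi)\sum_h u^\xi_{\xi_h}\dot\xi_h\,w\cdot w\,dx$ carrying the exponentially small factor $\dot\xi$, which \emph{is} harmless. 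Finally, your continuation claim ``for $\beta$ sufficiently large'' is too loose to close: one also needs quantitative control of how $\|\mathscr{F}(u^{\xi(t)})\|$ drifts along the flow (the paper's \eqref{c-and-c0}), and the argument is closed by iterating on time intervals of fixed length $\bar t=2\nu\ln(4D/\mu^*)$ after which the $w$-bound contracts back from $\beta Q\|\mathscr{F}\|$ to $Q\|\mathscr{F}\|$ (equations \eqref{betaQ}--\eqref{Q}), so that the bootstrap hypothesis can be restarted.
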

From Theorem \ref{dynamic} and the explicit expression of $c(\xi)$ derived in Theorem \ref{TH-c} we have that the evolution of the layer positions $\xi_1,\ldots,\xi_N$ is determined by the equations
\begin{equation}
\dot{\xi}_j=\frac{2\epsilon}{\bar{q}_j^2}(\varsigma_{j+1}k_{j+1}^+\mathscr{E}_{j+1}
 -\varsigma_jk_j^-\mathscr{E}_j)+\mathrm{O}(e^{-\frac{\alpha\rho}{\epsilon}}\max_{h,\pm}\mathscr{E}_h^\pm),
 \;\;j=1,\ldots,N.
\label{LD-gen}
\end{equation}
where $\varsigma_j=z_j^-\cdot z_j^+$, $\mu_j=\frac{1}{2}(\mu_j^-+\mu_j^+)$, $k_j^\pm=\mu_j^\pm\mu_j\bar{K}_j^-\bar{K}_{j-1}^+$ and
\[\mathscr{E}_j=e^{-\frac{\mu_j}{\epsilon}(\xi_j-\xi_{j-1})},\quad \mathscr{E}_j^\pm=e^{-\frac{\mu_j^\pm}{\epsilon}(\xi_j-\xi_{j-1})}.\]
If $\bf{H}_4$ holds we have $z_j^-=\pm z_j^+$, $\mu_j=\mu_j^-=\mu_j^+$, $k_j^\pm=k_j=\mu_j^2\bar{K}_j^-\bar{K}_{j-1}^+$, $\mathscr{E}_j=\mathscr{E}_j^\pm$ and \eqref{LD-gen} becomes
\begin{equation}
\dot{\xi}_j=\frac{2\epsilon}{\bar{q}_j^2}(\varsigma_{j+1}k_{j+1}\mathscr{E}_{j+1}
 -\varsigma_jk_j\mathscr{E}_j)+\mathrm{O}(e^{-\frac{\alpha\rho}{\epsilon}}\max_h\mathscr{E}_h),
 \;\;j=1,\ldots,N,
\label{LD-H4}
\end{equation}
with $\varsigma_j=\pm 1$.
Equations \eqref{LD-gen} and \eqref{LD-H4} show that, as was conjectured in \cite{bethsmets1}, beside their relevance in relation to the existence of periodic orbits of \eqref{rescaled} (or equivalently stationary solutions of \eqref{parabolic0}) the scalar products $\varsigma_j=z_j^-\cdot z_j^+$, $j=1,\ldots,N$ have also a central importance in layers dynamics. Indeed \eqref{LD-gen} shows that, depending on the value of $\varsigma_j$ the term  $\varsigma_jk_j^-\mathscr{E}_j$ can give a positive, negative or zero contribution to the speed $\dot{\xi}_j$ of the $j$-th layer. The value of the products $\varsigma_j$ also determines if consecutive layers attract or repel each other. To see this consider \eqref{LD-H4} where $\varsigma_j$ can only assume the extreme values $\pm 1$ and suppose that $\mu_j(\xi_j-\xi_{j-1})>\mu_h(\xi_h-\xi_{h-1})$ for $h\neq j$; then \eqref{LD-H4}, for small $\epsilon>0$, imply
\begin{equation}
\dot{\xi}_j-\dot{\xi}_{j-1}=-2\varsigma_jk_j\mathscr{E}_j+\ldots
\label{attrrepel}
\end{equation}
where $\ldots$ denotes higher order terms in $\epsilon$. From equation \eqref{attrrepel} we have attraction if
$\varsigma_j=1$ and repulsion if
$\varsigma_j=-1$.

\begin{remark} as we have observed, \eqref{parabolic0} is the $L^2$ gradient system associated to the functional \eqref{eps-action}. Therefore it is to be expected that \eqref{LD-gen} or \eqref{LD-H4} is approximately the gradient system of a suitable energy $J^0:\Xi_\rho\rightarrow\R$. We consider only the case of \eqref{LD-H4} and note that, indeed, if we set
$C_j^0(\xi)=\frac{2\epsilon}{\bar{q}_j^2}(\varsigma_{j+1}k_{j+1}\mathscr{E}_{j+1}
-\varsigma_jk_j\mathscr{E}_j)$, we have
\begin{equation}
\begin{split}
&\sum_h C_h^0\bar{q}_h^2\xi_h^\prime=-\sum\frac{\partial}{\partial\xi_h}J^0(\xi)\xi_h^\prime,
\;\;\xi^\prime\in\R^N,\\
&J^0(\xi)=\epsilon\sum_j\bar{q}_j^2-2\epsilon^2\sum_h\frac{\varsigma_hk_h}{\mu_h}\mathscr{E}_h,
\end{split}
\label{J0}
\end{equation}
and therefore $(C_1^0,\ldots,C_N^0)$ is the negative of the gradient of $J^0$  with respect to the inner product $(\xi,\xi^\prime)=\sum_h\bar{q}_h^2\xi_h\xi_h^\prime$ which can be regarded as the restriction to the tangent space of $\mathcal{M}$ of the standard inner product in $L^2((0,1);\R^m)$.
Note also that $J^0(\xi)\approx J_\eps(u^\xi)$, thus $J^0$ is approximately the restriction of $J_\eps(u)$ to $\mathcal{M}$.
\end{remark}
In Fig. \ref{fig3} we consider a potential $W\geq 0$ with three connected zeros. In case a) we have $\varsigma_j=1$ and in case b) $\varsigma_j=-1$. Under assumptions ${\mathbf{H}}_1-{\mathbf{H}}_4$ in case a) Theorem \ref{CNS} gives the existence of a number of stationary solutions of \eqref{parabolic0} that diverges to $\infty$ as $\epsilon\rightarrow 0$ and $\varsigma_j=1$ implies that layers attract each other and is to be expected that all these stationary solutions are unstable for the dynamics of \eqref{parabolic0}. In case b) from Theorem \ref{CNS} we deduce the existence of just one stationary solution which, since $\varsigma_j=-1$ implies that layers repel each other, is conjectured to be stable. In Fig. \ref{fig3} we also sketch the expected behavior of $J^0$ in a neighborhood of the stationary solution.
\begin{figure}
  \begin{center}
\begin{tikzpicture}

{
\draw [blue, thick](2,0).. controls (3,2) and(5,2)..(6,0);
\draw [blue, thick](7.5,1.5).. controls (8.5,-.5) and(10.5,-.5)..(11.5,1.5);
%\draw [blue, thick](4.2,+.05).. controls (5.5,1.2) and(6.5,1.1)..(7.8,+.05);
 \node at (3.9,2){$\varsigma_h=1$};
 \node at (8.9,2){$\varsigma_h=-1$};

%\node at (8.2,.15){$a_+$};
\node at (7.1,1){$J^0(\xi)$};
\node at (2.1,1){$J^0(\xi)$};
\node at (5,-.1){$\xi\in\Xi_\rho$};
\node at (11,-.2){$\xi\in\Xi_\rho$};
\draw (1.5,-.5)--(6.5,-.5);
\draw (7,-.5)--(12,-.5);}

  \draw[fill]  (3.9,1.5) circle [radius=0.05];
 \draw[fill]  (9.3,0) circle [radius=0.05];

%%%%%%%%%%%%%%%%%%%%%%%%%%%%%%%%%%%%%%%%%%%%%%%%%%%%%%%%%%%%%%%%%%%%%%%%%%%%%%%%%%%%%%%%%%%%%%%%%%%%%%%%%%%%%%
%%%%%%%%%%%%%%%%%%%%%%%%%%%%%%%%%%%%%%%%%%%%%%%%%%%%%%%%%%%%%%%%%%%%%%%%%%%%%%%%%%%%%%%%%%%%%%%%%%%%%%%%%%%%%%%%%

%{\draw [blue, very thick ,->] (4.5,-6) arc (60:120:2.595);}
%{\draw [blue, very thick ,->] (7.095,-1.5) arc (-180:-120:2.595);}
%{\draw [blue, very thick ,->] (1.905,-1.5) arc (-60:0:2.595);}
{\draw [blue, thick ] (5.4,-3.75) arc (60:120:2.595);
\draw [blue, thick ] (4.095,-1.5) arc (-180:-120:2.595);
\draw [blue, thick ] (2.8,-3.75) arc (-60:0:2.595);}
%\node at (12.3,0){$\pi_\gamma$};
{\draw[ fill] (4.09,-1.5) circle [radius=0.05];
\draw[ fill] (5.4,-3.75) circle [radius=0.05];
\draw[ fill] (2.8,-3.75) circle [radius=0.05];}

{\draw[ fill] (8.89,-1.45) circle [radius=0.05];
\draw[ fill] (10.05,-3.55) circle [radius=0.05];
\draw[ fill] (8.05,-3.65) circle [radius=0.05];
\draw [blue, thick ] (9.0,-2.75) circle [radius=1.3];}

{\node at (3,-2){a)};
\node at (7,-2){b)};}

\end{tikzpicture}
\end{center}

\caption{A potential with three zeros, $\varsigma_h=\pm 1$.}
\label{fig3}
\end{figure}
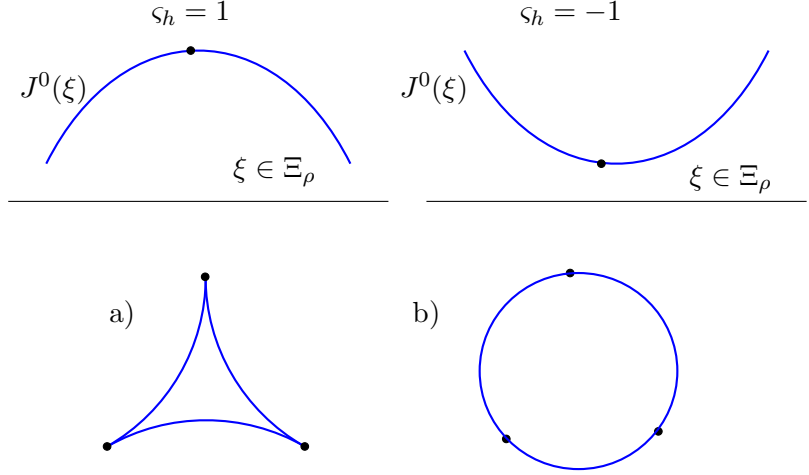

There is an extensive literature on layers dynamics, slow motion and related problems.
Layers dynamics in the scalar case and for potentials with two minima, the typical example being $W(u)=\frac{1}{4}(1-u^2)^2$, was first described by Neu and then analyzed in \cite{cp}, \cite{fh}, \cite{brKohn}, \cite{cp1}, \cite{f}, \cite{ward}, \cite{or} and \cite{bnn} for the Allen-Cahn equation and  \cite{abaf}, \cite{brh}, \cite{bx}, \cite{bx1} for the Cahn-Hilliard equation. Slow motion of layers also appears in the context of dissipative hyperbolic equations \cite{MS}, \cite{FLM}. For the dynamics of the scalar Allen-Cahn equation with general nonlinearity we refer to \cite{MaPo}, \cite{MaPo1} and the references there in. For the vector case $m>1$ we refer to \cite{grant}, \cite{bethsmets1} and \cite{bethsmets2}. The phenomenon of slow motion in higher dimension was studied in \cite{afk}, \cite{abf}, \cite{af3},\cite{ward1},\cite{EY} and \cite{muri}.

 \section{The proof of Theorem \ref{CNS}}

 The analysis of equation \eqref{quasiinv0} is based on the estimate \eqref{beta-error} (see Lemma \ref{est-Fuxi}) and on the properties of the linear operator ${L}^\xi:W^{2,2}((0,1);\R^m)\rightarrow L^2((0,1);\R^m)$, acting on  the subset of $1$-periodic maps,
\begin{equation}\label{operator}
{L}^\xi v=-\epsilon^2v_{xx}+W_{uu}(u^\xi)v,
\end{equation}
that we collect in the following proposition

\begin{proposition}\label{SpectrumL}
 Assume $\mathbf{H}_1-\mathbf{H}_3$.
There exist $\epsilon_0>0$, $\lambda^*>0$ and $\mu^*>0$ such that, for $\epsilon\in(0,\epsilon_0]$, $\xi\in\Xi_\rho$, the operator ${L}^\xi$ has $N$ eigenvalues $\lambda_1^\xi,\ldots,\lambda_N^\xi$ that satisfy
\begin{equation}
\vert\lambda_j^\xi\vert\leq Ce^{-\frac{\alpha\rho}{\epsilon}},\;\;j=1,\ldots,N.
\label{EigenBounds}
\end{equation}
If $\lambda\neq\lambda_1^\xi,\ldots,\lambda_N^\xi$ is an eigenvalue of ${L}^\xi$ then
\begin{equation}
\lambda\geq\lambda^*.
\label{EigenGap}
\end{equation}
Let $X^\xi$ the eigenspace associated to the spectral set $\{\lambda_1^\xi,\ldots,\lambda_N^\xi\}$, then $X^\xi$ has a hortonormal basis $\{\varphi_1^\xi,\ldots,\varphi_N^\xi\}$ of the form
\begin{equation}
\varphi_j^\xi:=\frac{u_{\xi_j}^\xi}{\| u_{\xi_j}^\xi\|}+\eta_j^\xi\in X^\xi,\;\;  \|\eta_j^\xi\|=O(e^{-\frac{\alpha\rho}{\epsilon}}).
\label{approxphi}
\end{equation}
Let $X^{\xi,\perp}$ the orthogonal complement of $X^\xi:=\mathrm{Span}(\varphi_1^\xi,\ldots,\varphi_N^\xi)$. Then
\begin{equation}
\begin{split}
&\vert\langle{L}^\xi v,v\rangle\vert\geq\lambda^*\|v\|^2,\;\;v\in X^{\xi,\perp}\cap W^{1,2}((0,1);\R^m),\\
&\vert\langle{L}^\xi v,v\rangle\vert\geq\mu^*\|v\|_{W_{\eps}^{1,2}}^2,\;\;v\in X^{\xi,\perp}\cap W^{1,2}((0,1);\R^m).
\end{split}
\label{Lbounds}
\end{equation}
\end{proposition}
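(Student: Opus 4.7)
The plan is the standard Lyapunov--Schmidt / spectral localization route: build the quasi-kernel from the translational zero modes $u_{\xi_j}^\xi$, reduce $L^\xi$ to an almost block-diagonal sum of the line operators $L_j$ by a partition of unity tuned to the layers, and then transport the spectral gap of each $L_j$ to a uniform gap for $L^\xi$.

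First I would identify the almost-kernel. Differentiating $\eps^2 u_{xx}^\xi - W_u(u^\xi) = \mathscr{F}(u^\xi)$ with respect to $\xi_j$ gives
\[
L^\xi u_{\xi_j}^\xi = -\partial_{\xi_j}\mathscr{F}(u^\xi),
\]
and from the definition \eqref{uxi} one checks $u_{\xi_j}^\xi(x) = -\eps^{-1}\bar{u}_j'((x-\xi_j)/\eps) + \mathrm{O}(e^{-\alpha\rho/\eps})$, concentrated on scale $\eps$ near $\xi_j$. Hence the $\{u_{\xi_j}^\xi\}_{j=1}^N$ have nearly disjoint support and are almost orthogonal in $L^2((0,1);\R^m)$, and one Gram--Schmidt step produces $\varphi_j^\xi$ of the form \eqref{approxphi}; the right-hand side of the display above is $\mathrm{O}(e^{-\alpha\rho/\eps})$ by the $\xi$-differentiated version of \eqref{beta-error} (which is already part of Lemma \ref{est-Fuxi}). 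Combined with self-adjointness of $L^\xi$ and the min--max principle, this upper-bounds $N$ eigenvalues by $\langle L^\xi\varphi_j^\xi,\varphi_j^\xi\rangle = \mathrm{O}(e^{-\alpha\rho/\eps})$, giving \eqref{EigenBounds}.

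The heart of the proof is the coercivity \eqref{Lbounds}. Choose a smooth partition of unity $\{\chi_j\}_{j=1}^N$ on the circle $\R/\Z$ with $\chi_j\equiv 1$ in a neighborhood of $\xi_j$, $\mathrm{supp}\,\chi_j\subset(\hat\xi_{j-1},\hat\xi_j)$, and $|\chi_j'|+|\chi_j''|\leq C$ uniformly. For $v\in X^{\xi,\perp}\cap W^{1,2}$ set $v_j=\chi_j v$ and rescale by $w_j(y)=v_j(\eps y+\xi_j)$. Using $\sum\chi_j^2=1$ and integration by parts, the commutator terms produce only $\mathrm{O}(\eps^2\|v\|^2)$ so that
\[
\langle L^\xi v,v\rangle = \sum_{j=1}^N \langle L^\xi v_j,v_j\rangle + \mathrm{O}(\eps^2\|v\|^2).
\]
On the support of $v_j$ the profile $u^\xi(\eps y+\xi_j)$ agrees with $\bar u_j(y)$ up to $\mathrm{O}(e^{-\alpha\rho/\eps})$ thanks to \eqref{generic} and the definition of $\Xi_\rho$, whence
\[
\langle L^\xi v_j,v_j\rangle = \eps\,\langle L_j w_j,w_j\rangle_{L^2(\R)} + \mathrm{O}(e^{-\alpha\rho/\eps})\|w_j\|_{H^1(\R)}^2.
\]
Now $L_j\geq 0$ because $\bar u_j$ minimizes the action (H$_1$), $\ker L_j=\mathrm{Span}(\bar u_j')$ by H$_3$, and the essential spectrum of $L_j$ starts at $\min\{(\mu_j^-)^2,(\mu_{j+1}^+)^2\}>0$ by H$_2$; hence there is a uniform gap $\gamma^*>0$ such that $\langle L_j w,w\rangle\geq\gamma^*\|w\|^2$ whenever $w\perp\bar u_j'$ in $L^2(\R)$. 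The orthogonality $v\perp\varphi_j^\xi$, together with \eqref{approxphi} and the localization $v_j=\chi_j v$, transfers to $\langle w_j,\bar u_j'\rangle_{L^2(\R)} = \mathrm{O}(e^{-\alpha\rho/\eps})\|v\|$. Subtracting the kernel component and applying the gap yields $\langle L_j w_j,w_j\rangle\geq\gamma^*\|w_j\|^2 - \mathrm{O}(e^{-\alpha\rho/\eps})\|v\|^2$. Summing over $j$ and undoing the rescaling gives \eqref{Lbounds}$_1$ with any $\lambda^*\in(0,\gamma^*)$ for $\eps$ small; \eqref{Lbounds}$_2$ follows by writing $\eps^2\|v_x\|^2 = \langle L^\xi v,v\rangle - \langle W_{uu}(u^\xi)v,v\rangle$ and interpolating with \eqref{Lbounds}$_1$. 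The separation \eqref{EigenGap} is then the min--max consequence of coercivity on the codimension-$N$ subspace $X^{\xi,\perp}$.

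The main obstacle is keeping every constant uniform in $\xi\in\Xi_\rho$ and in $\eps\in(0,\eps_0]$. The interval widths $\xi_j-\xi_{j-1}$ are bounded below only by $\rho/\mu_j$, so the overlap of the translated profiles near the midpoints $\hat\xi_j$ must be tracked carefully using the explicit exponential decay \eqref{generic}; this is what ensures that all the crossed and remainder terms above genuinely sit in $\mathrm{O}(e^{-\alpha\rho/\eps})$ rather than merely in some $\mathrm{o}(1)$. Once this uniformity is in hand, the remaining assertions of the proposition follow by routine min--max arguments.
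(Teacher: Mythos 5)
The paper does not prove Proposition \ref{SpectrumL} itself; it defers entirely to Appendix~A of the reference \cite{BFK}, remarking only that the Neumann-boundary argument given there carries over to the periodic setting with minor changes. Your IMS-localization plus spectral-gap route is, in outline, exactly that argument, so you have reconstructed the intended proof.

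Two places in your sketch need tightening before it can stand on its own. First, the Gram--Schmidt step as you describe it produces an orthonormal basis of the \emph{approximate} kernel $\tilde X^\xi=\mathrm{Span}(u_{\xi_1}^\xi,\ldots,u_{\xi_N}^\xi)$, not of the genuine eigenspace $X^\xi$; to obtain \eqref{approxphi} one must in addition show that the Riesz spectral projector onto $X^\xi$ differs from the orthogonal projector onto $\tilde X^\xi$ by $\mathrm{O}(e^{-\alpha\rho/\eps})$ in operator norm. This does follow from the coercivity on $\tilde X^{\xi,\perp}$ together with $\|L^\xi p\|\lesssim e^{-\alpha\rho/\eps}\|p\|$ for $p\in\tilde X^\xi$, but it is a distinct step that should be stated before the $\varphi_j^\xi$ can be declared to lie in $X^\xi$. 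Second, the min--max argument you invoke gives only the one-sided bound $\lambda_j^\xi\leq Ce^{-\alpha\rho/\eps}$. The lower bound $\lambda_j^\xi\geq -Ce^{-\alpha\rho/\eps}$ does not come from the IMS decomposition, whose commutator error is $\mathrm{O}(\eps^2\|v\|^2)$ rather than exponentially small; instead one writes a small eigenvector as $\psi=p+q$ with $p\in\tilde X^\xi$, $q\perp\tilde X^\xi$, and absorbs the cross term $2\langle L^\xi p,q\rangle$ using $\|L^\xi p\|\lesssim e^{-\alpha\rho/\eps}\|p\|$ against the coercive term $\langle L^\xi q,q\rangle\geq\lambda^*\|q\|^2$. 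You already have both ingredients on the page; the assembly just needs to be made explicit, otherwise \eqref{EigenBounds} is only half-proved.
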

For the proof of Proposition \ref{SpectrumL} we refer to \cite{BFK} Appendix A where the proof is done for the case of Neumann boundary conditions. The argument applies to the present case of periodic boundary conditions with slight modifications.

 From Proposition \ref{SpectrumL} ${L}^\xi$ is almost singular on $X^\xi$ and therefore equation (\ref{rescaled1}), as it stands, can not be solved for $v$. Therefore we proceed as sketched before and begin by solving for $v\in X^{\xi,\perp}$ the  projection \eqref{quasiinv0} of  equation \eqref{rescaled1} on $X^{\xi,\perp}$.

Given $v\in X^{\xi,\perp}$, by forming the inner product of \eqref{quasiinv0} with $\varphi_j^\xi$, $j=1,\ldots,N$, and observing that $\langle L^\xi v,\varphi_j^\xi\rangle=0$, we obtain that $c(\xi,v)$ is uniquely determined by

\begin{equation}
c_j(\xi,v)=\langle\mathscr{F}(u^\xi)-N^\xi(v),\varphi_j^\xi\rangle,\;\;j=1,\ldots,N,
\label{ortcond}
\end{equation}
where we have set

\begin{equation}
\begin{split}
&N^\xi(v):=W_u(u^\xi+v)-W_u(u^\xi)-W_{uu}(u^\xi)v,\\
&\mathscr{F}(u^\xi):=\epsilon^2 u_{xx}^\xi-W_u(u^\xi),
\end{split}
\label{Nandbeta}
\end{equation}

Let $V^\xi$ be the set of $1$-periodic maps defined by
\begin{equation}
V^\xi:=\{v\in X^{\xi,\perp}\cap W^{1,2}((0,1);\R^m): \|v\|_{W_\epsilon^{1,2}}\leq\frac{2}{\mu^*}\|\mathscr{F}(u^\xi)\|\},
\label{Vxi}
\end{equation}
where $\mu^*$ is the constant in \eqref{Lbounds}.

We show that (\ref{quasiinv0}) with $c(\xi,v)$ given by \eqref{ortcond} has a unique solution $v\in V^\xi$.
\begin{proposition}\label{quasiinv1}
Assume $\mathbf{H}_1-\mathbf{H}_3$.
There is $\epsilon_0>0$ such that, $\epsilon\in(0,\epsilon_0]$ implies the following: for each $\xi\in\Xi_\rho$ there exist uniquely determined $v^\xi\in V^\xi$ and $c(\xi)=c(\xi,v^\xi)\in\R^N$ that solve (\ref{quasiinv0}). The maps  $\Xi_\rho\ni\xi\rightarrow c(\xi)\in\R^N$ and $\Xi_\rho\ni\xi\rightarrow v^\xi\in W^{1,2}$ are smooth and
\begin{equation}
\begin{split}
&\|v^\xi\|_{W_\epsilon^{1,2}}\leq\frac{2}{\mu^*}\|\mathscr{F}(u^\xi)\|,\\
&c(\xi)=(\langle \mathscr{F}(u^\xi),\varphi_1^\xi\rangle,\ldots,\langle\mathscr{F}(u^\xi),\varphi_N^\xi\rangle)^\top
+O(\epsilon^{-\frac{1}{2}}\|\mathscr{F}(u^\xi)\|^2),\\
&\epsilon^2(u^\xi+v^\xi)_{xx}-W_u(u^\xi+v^\xi)=\sum_{j=1}^Nc_j(\xi)\varphi_j^\xi.
\end{split}
\label{cvest}
\end{equation}

\end{proposition}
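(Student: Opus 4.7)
The plan is to implement the Lyapunov--Schmidt reduction already sketched: with $c(\xi,v)$ forced to be given by \eqref{ortcond}, the right-hand side of \eqref{quasiinv0} lies in $X^{\xi,\perp}$, and since $L^\xi$ preserves the splitting $X^\xi\oplus X^{\xi,\perp}$, we can invert $L^\xi$ on $X^{\xi,\perp}$ using the coercivity \eqref{Lbounds}. Concretely, for $v\in V^\xi$ I would define
\[\mathcal{T}(v)\;:=\;-(L^\xi|_{X^{\xi,\perp}})^{-1}P^\perp\bigl(N^\xi(v)-\mathscr{F}(u^\xi)\bigr),\]
where $P^\perp$ is the orthogonal projection onto $X^{\xi,\perp}$, and look for $v^\xi$ as a fixed point of $\mathcal{T}$ in $V^\xi$. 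From \eqref{Lbounds} the estimate $\|L^\xi v\|\geq \mu^*\|v\|_{W^{1,2}_\epsilon}$ holds for $v\in X^{\xi,\perp}\cap W^{1,2}$, so $L^\xi|_{X^{\xi,\perp}}$ is invertible with $\|(L^\xi)^{-1}f\|_{W^{1,2}_\epsilon}\leq (1/\mu^*)\|f\|$ for $f\in X^{\xi,\perp}$.

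The two things to check are then (a) self-mapping and (b) contraction on $V^\xi$. Both rely on the same nonlinear estimate: since $W\in C^4$ and $u^\xi$ is bounded uniformly in $\xi,\epsilon$, Taylor's formula gives $\|N^\xi(v)\|\leq C\|v\|_\infty\|v\|$ and $\|N^\xi(v_1)-N^\xi(v_2)\|\leq C(\|v_1\|_\infty+\|v_2\|_\infty)\|v_1-v_2\|$. The one-dimensional Sobolev embedding for $1$-periodic maps yields $\|v\|_\infty\leq C\|v\|^{1/2}\|v_x\|^{1/2}\leq C\epsilon^{-1/2}\|v\|_{W^{1,2}_\epsilon}$, so for $v\in V^\xi$
\[\|N^\xi(v)\|\;\leq\;C\epsilon^{-1/2}\|v\|_{W^{1,2}_\epsilon}^{2}\;\leq\;C\epsilon^{-1/2}\|\mathscr{F}(u^\xi)\|^{2}.\]
Because $\|\mathscr{F}(u^\xi)\|=\mathrm{O}(e^{-\alpha\rho/\epsilon})$ by \eqref{beta-error}, the factor $\epsilon^{-1/2}\|\mathscr{F}(u^\xi)\|$ is arbitrarily small for $\epsilon\leq\epsilon_0$, which makes $\mathcal{T}$ a $\tfrac{1}{2}$-contraction on $V^\xi$ and gives
\[\|\mathcal{T}(v)\|_{W^{1,2}_\epsilon}\leq\tfrac{1}{\mu^*}\bigl(\|\mathscr{F}(u^\xi)\|+\|N^\xi(v)\|\bigr)\leq\tfrac{2}{\mu^*}\|\mathscr{F}(u^\xi)\|,\]
i.e.\ $\mathcal{T}(V^\xi)\subset V^\xi$. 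Banach's fixed point theorem then produces a unique $v^\xi\in V^\xi$ solving \eqref{quasiinv0}, with the first bound in \eqref{cvest} built in. The second bound in \eqref{cvest} follows immediately by substituting $v^\xi$ into \eqref{ortcond}, using $\|\varphi_j^\xi\|=1$ and the nonlinear estimate above:
\[c_j(\xi)=\langle\mathscr{F}(u^\xi),\varphi_j^\xi\rangle-\langle N^\xi(v^\xi),\varphi_j^\xi\rangle=\langle\mathscr{F}(u^\xi),\varphi_j^\xi\rangle+\mathrm{O}(\epsilon^{-1/2}\|\mathscr{F}(u^\xi)\|^{2}).\]
The third identity in \eqref{cvest} is a direct rewriting: by $L^\xi v=-\epsilon^2 v_{xx}+W_{uu}(u^\xi)v$ and the definition of $N^\xi$, one has $\mathscr{F}(u^\xi+v)=\mathscr{F}(u^\xi)-L^\xi v-N^\xi(v)$, and $L^\xi v^\xi=\mathscr{F}(u^\xi)-N^\xi(v^\xi)-\sum_j c_j(\xi)\varphi_j^\xi$ by \eqref{quasiinv0}, so the two cancellations leave exactly $\sum_j c_j(\xi)\varphi_j^\xi$.

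Smoothness of $\xi\mapsto v^\xi$ and $\xi\mapsto c(\xi)$ will come from the implicit function theorem applied to the smooth map
\[F(v,\xi):=L^\xi v+P^\perp\bigl(N^\xi(v)-\mathscr{F}(u^\xi)\bigr),\]
regarded as a map from $X^{\xi,\perp}\times\Xi_\rho$ into $X^{\xi,\perp}$; the partial $D_vF(v^\xi,\xi)=L^\xi+P^\perp D N^\xi(v^\xi)$ is a small perturbation of $L^\xi|_{X^{\xi,\perp}}$ and is therefore boundedly invertible by \eqref{Lbounds}. The only slightly delicate point is that the splitting $X^\xi\oplus X^{\xi,\perp}$ itself depends on $\xi$, but by \eqref{approxphi} the basis $\{\varphi_j^\xi\}$ is smooth in $\xi$ (it is obtained by Gram--Schmidt from the smooth family $\{u_{\xi_j}^\xi\}$), so the projections $P$ and $P^\perp$ depend smoothly on $\xi$ and the IFT applies in a fixed ambient space $W^{1,2}$.

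The main obstacle is purely quantitative: making sure that the nonlinear Lipschitz constant of $\mathcal{T}$ really beats the coercivity constant $\mu^*$ uniformly in $\xi\in\Xi_\rho$. This is where both the $\epsilon^{-1/2}$ loss from the Sobolev embedding in the weighted norm and the exponential smallness $\mathrm{O}(e^{-\alpha\rho/\epsilon})$ of $\|\mathscr{F}(u^\xi)\|$ (from Lemma \ref{est-Fuxi}) are essential; the product $\epsilon^{-1/2}\|\mathscr{F}(u^\xi)\|$ tends to $0$ as $\epsilon\to 0$ uniformly in $\xi\in\Xi_\rho$, which is exactly what is needed. Everything else is bookkeeping from the coercivity in Proposition \ref{SpectrumL}.
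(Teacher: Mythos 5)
Your proposal is correct and follows essentially the same route as the paper: define $c(\xi,v)$ by the orthogonality condition \eqref{ortcond}, invert $L^\xi$ on $X^{\xi,\perp}$ using the coercivity in Proposition \ref{SpectrumL}, establish the $L^\infty$ and nonlinear estimates that show the resulting map is a contraction on $V^\xi$ for small $\epsilon$, and then read off \eqref{cvest} from the fixed point. The only stylistic differences are that the paper works directly with the quadratic form $\langle L^\xi\cdot,\cdot\rangle$ rather than the operator-norm bound for $(L^\xi|_{X^{\xi,\perp}})^{-1}$, and uses a pointwise argument via a minimizing point $\bar{x}$ to get $\|v\|_\infty\leq C\epsilon^{-1/2}\|v\|_{W^{1,2}_\epsilon}$ (your shorthand $\|v\|_\infty\leq C\|v\|^{1/2}\|v_x\|^{1/2}$ technically needs the extra $\|v\|$ term for general periodic $v$, but the conclusion is unchanged); you also supply the implicit-function-theorem argument for smoothness, which the paper asserts without elaboration.
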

\begin{proof}
1. As we have observed, given $v\in V^\xi$, (\ref{ortcond}) determines the unique $c(\xi,v)$ ensuring that the right hand side of
\begin{equation}
-{L}^\xi\hat{v}=\sum_j\varphi_j^\xi c_j(\xi,v)+N^\xi(v)-\mathscr{F}(u^\xi),
\label{quasiinvhat}
\end{equation}
belongs to $X^{\xi,\perp}$. Moreover the smoothness of $u^\xi$ and $\varphi_j^\xi$ implies that the right hand side of \eqref{quasiinvhat}
is a $W^{1,2}$ map. Therefore Proposition \ref{SpectrumL} implies that (\ref{quasiinvhat}) has a unique solution $\hat{v}\in X^{\xi,\perp}\cap W^{1,2}((0,1);\R^m)$.

2. For $v\in V^\xi$ we have
\begin{equation}
\|v\|_{L^\infty(\R;\R^m)}\leq C_1\epsilon^{-\frac{1}{2}}\|\mathscr{F}(u^\xi)\|,
\label{v-linfty}
\end{equation}
 and
\begin{equation}
\|N^\xi(v)\|\leq C_2\epsilon^{-\frac{1}{2}}\|\mathscr{F}(u^\xi)\|^2.
\label{Nv}
\end{equation}

Fix $\bar{x}\in[0,1]$ such that $\vert v(\bar{x})\vert=\min_{x\in[0,1]}\vert v(x)\vert$. Then we have
\begin{equation}\vert v(x)\vert^2\leq\vert v(\bar{x})\vert^2+2\int_{\bar{x}}^{\bar{x}+1}\vert v(s)\vert\vert v^\prime(s)\vert ds
\leq\|v\|^2+2\|v\|\|v^\prime\|\leq\frac{2}{\epsilon}\|v\|_{W_\epsilon^{1,2}}^2,
\label{linftyproof}
\end{equation}
and \eqref{v-linfty} follows from \eqref{Vxi}.

To prove \eqref{Nv} we note that, for each $x\in[0,1]$, it results
\[N^\xi(v)=\int_0^1\Big(W_{uu}(u^\xi+s v)-W_{uu}(u^\xi)\Big)v ds=\int_0^1s\int_0^1 W_{uuu}(u^\xi+rs v)v\cdot vdr ds.\]
From this, since \eqref{beta-error} implies
\[\max_{\vert z\vert\leq C_1\epsilon^{-\frac{1}{2}}\|\mathscr{F}(u^\xi)\|} \vert W_{uuu}(u^\xi+z)\vert\leq C,\]
 on the basis of \eqref{v-linfty}  we have,
\[\vert N^\xi(v)\vert\leq C\vert v\vert^2\leq C\epsilon^{-\frac{1}{2}}\|\mathscr{F}(u^\xi)\|\vert v\vert ,\;\;x\in[0,1],\]
and \eqref{Nv} follows.

3. $\hat{v}\in V^\xi$. By taking the inner product of (\ref{quasiinvhat}) with $\hat{v}$ we obtain via (\ref{Lbounds})
\[\begin{split}
&\mu^*\|\hat{v}\|_{W_\epsilon^{1,2}}^2\leq\|N^\xi(v)\|\|\hat{v}\|+\|\mathscr{F}(u^\xi)\|\|\hat{v}\|\\
&\leq (C_2\epsilon^{-\frac{1}{2}}\|\mathscr{F}(u^\xi)\|+1)\|\mathscr{F}(u^\xi)\|\|\hat{v}\|
.
\end{split}\]
This and $\|\mathscr{F}(u^\xi)\|=O(e^{-\frac{\alpha\rho}{\epsilon}})$ imply that, provided $\epsilon>0$ is sufficiently small, we have
\[\|\hat{v}\|_{W_\epsilon^{1,2}}\leq\frac{2}{\mu^*}\|\mathscr{F}(u^\xi)\|,\]
that proves the claim.

4. The map $V^\xi\ni v\rightarrow\hat{v}\in V^\xi$ constructed in 3. is a contraction. By subtracting equation (\ref{quasiinvhat}) written for $v, w\in V^\xi$ and taking the inner product with $\hat{v}-\hat{w}$  we get
\begin{equation} -\langle L^\xi(\hat{v}-\hat{w}),\hat{v}-\hat{w}\rangle
=\langle N^\xi(v)-N^\xi(w),\hat{v}-\hat{w}\rangle.
\label{contr-eq0}
\end{equation}
where we have used $\langle\varphi_j^\xi,\hat{v}-\hat{w}\rangle=0$. From \eqref{Lbounds} and \eqref{contr-eq0} it follows

\begin{equation}
\mu^*\|\hat{v}-\hat{w}\|_{W_\epsilon^{1,2}}
\leq\|N^\xi(v)-N^\xi(w)\|.
\label{contr-eq}
\end{equation}
We have
\[\begin{split}
&N^\xi(v)-N^\xi(w)
=\int_0^1(W_{uu}(u^\xi+w+\tau(v-w))-W_{uu}(u^\xi))(v-w)d\tau\\
&=\int_0^1\int_0^1W_{uuu}(u^\xi+t(w+\tau(v-w)))(w+\tau(v-w))(v-w)dt d\tau,
\end{split}\]
and therefore, since $v,w\in V^\xi$, by \eqref{v-linfty}
\[\begin{split}
&\vert N^\xi(v)-N^\xi(w)\vert\leq C(\vert v\vert+2\vert w\vert)\vert v-w\vert\leq 3\frac{C}{\epsilon^\frac{1}{2}}\|\mathscr{F}(u^\xi)\|\vert v-w\vert,\\
&\Rightarrow\quad\|N^\xi(v)-N^\xi(w)\|\leq 3\frac{C}{\epsilon^\frac{1}{2}}\|\mathscr{F}(u^\xi)\|\|v-w\|,\;\;v,w\in V^\xi.
\end{split}\]
From this and \eqref{contr-eq} we obtain
\[\|\hat{v}-\hat{w}\|_{W_\epsilon^{1,2}}\leq 3\frac{C}{{\mu^*}\epsilon^\frac{1}{2}}\|\mathscr{F}(u^\xi)\|\|v-w\|\]
that implies the claim for $\epsilon>0$ sufficiently small.

5. Let $v^\xi\in V^\xi$ the fixed point of the contraction map $V^\xi\ni v\rightarrow\hat{v}\in V^\xi$. Clearly $v^\xi$ satisfies (\ref{cvest})$_1$. The estimate for $c(\xi)$ follows from
\[c(\xi)=c(\xi,v^\xi)=-
(\langle N^\xi(v^\xi)-\mathscr{F}(u^\xi),\varphi_1^\xi
\rangle,\ldots,\langle N^\xi(v^\xi)-\mathscr{F}(u^\xi),\varphi_N^\xi\rangle)^\top\]
and (\ref{cvest})$_1$ and (\ref{Nv});
\eqref{cvest}$_3$ is just a rewriting of \eqref{quasiinv0} with $v=v^\xi$. The proof is complete.
\end{proof}
\begin{remark}
\label{c-xi-xi}
The definition of $u^\xi$ implies that $u^{\xi+s\nu}=u^\xi(\cdot-s)$, ($\nu=(1,\ldots,1)^\top$) this and \eqref{operator} imply that we also have $\varphi_j^{\xi+s\nu}=\varphi_j^\xi(\cdot-s)$ and in turn $v^{\xi+s\nu}=v^\xi(\cdot-s)$. These observations and \eqref{cvest} imply
\begin{equation}
c(\xi+s\nu)=c(\xi),
\label{c-xi-xi1}
\end{equation}
That is $c(\xi)$ depends on $\xi$ only through the differences $\xi_h-\xi_{h-1}$, $i=1,\ldots,N$ and the identity
\[\sum_{h=1}^N\xi_h-\xi_{h-1}=\xi_N-\xi_0=1,\;\;(\xi_0=\xi_N-1),\]
 implies that $c(\xi)$ is actually a function of $N-1$ variables.
\end{remark}

From Proposition \ref{quasiinv1} there is a periodic solution of (\ref{rescaled}) in a neighborhood of $u^\xi$ of size $\|\mathscr{F}(u^\xi)\|$ if and only if $\xi$ is a solution of the bifurcation equation
\begin{equation}
c(\xi)=0.
\label{bifurcation}
\end{equation}
From the above remark this is a system of $N$ equations in $N-1$ unknowns. The following Proposition shows that, in spite of this, it is  possible to solve \eqref{bifurcation}.

\begin{proposition}
\label{N-1=N}
For $\epsilon\in(0,\epsilon_0]$, for some $\epsilon_0>0$, and for each $\xi\in\Xi_\rho$ there exist numbers $p_j^\xi\in\R$, $j=1,\ldots,N$ that satisfy

\begin{equation}
\begin{split}
&\sum_{j=1}^N(\bar{q}_j+p_j^\xi) c_j(\xi)=0,\\
& p_j^\xi =\mathrm{O}(e^{-\frac{\alpha\rho}{\epsilon}}),
\end{split}
\label{cLC}
\end{equation}
where $\bar{q}_j^2=\int_\R\vert\bar{u}_j^\prime\vert^2 ds$.
\end{proposition}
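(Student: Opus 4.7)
The plan is to exploit the translation invariance of the periodic problem, which produces exactly one linear relation among the $N$ components of $c(\xi)$. The key starting point is the identity
\[
\langle \mathscr{F}(u), u_x\rangle \;=\; \int_0^1\!\bigl(\eps^2 u_{xx}-W_u(u)\bigr)\cdot u_x\,dx \;=\; \tfrac{\eps^2}{2}\bigl[|u_x|^2\bigr]_0^1 - \bigl[W(u)\bigr]_0^1 \;=\; 0,
\]
valid for every $1$-periodic $u\in W^{2,2}((0,1);\R^m)$ by periodicity. Applied to $\hat u^\xi:=u^\xi+v^\xi$, which is $1$-periodic by Proposition \ref{quasiinv1}, together with the third equation in \eqref{cvest}, this gives
\[
0 \;=\; \langle \mathscr{F}(\hat u^\xi),\hat u^\xi_x\rangle \;=\; \sum_{j=1}^N c_j(\xi)\,\langle \varphi_j^\xi,\hat u^\xi_x\rangle.
\]
Hence \eqref{cLC}$_1$ will follow as soon as one shows $\langle\varphi_j^\xi,\hat u^\xi_x\rangle = -\eps^{-1/2}(\bar q_j+p_j^\xi)$ with $p_j^\xi=\mathrm O(e^{-\alpha\rho/\eps})$.

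To compute this inner product I would combine three ingredients. First, direct differentiation of \eqref{uxi} in $x$ and in each $\xi_k$ yields the exact identity $u^\xi_x=-\sum_{k=1}^N u^\xi_{\xi_k}$, which is the infinitesimal version of the relation $u^{\xi+s\nu}(x)=u^\xi(x-s)$ already noted in Remark \ref{c-xi-xi}. Second, the exponential decay of $\bar u_j'$ recorded in \eqref{generic}, combined with the separation $\xi_j-\xi_{j-1}>\rho/\mu_j$ built into $\Xi_\rho$, yields after change of variable $t=(x-\xi_j)/\eps$ that
\[
\langle u^\xi_{\xi_j},u^\xi_{\xi_k}\rangle = \delta_{jk}\,\frac{\bar q_j^2}{\eps}+\mathrm O\!\bigl(\eps^{-1}e^{-\alpha\rho/\eps}\bigr), \qquad \|u^\xi_{\xi_j}\|=\frac{\bar q_j}{\sqrt\eps}\bigl(1+\mathrm O(e^{-\alpha\rho/\eps})\bigr),
\]
because only the diagonal self-overlap of $\bar u_j'$ survives at leading order. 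Third, by \eqref{approxphi} and Proposition \ref{quasiinv1} (together with \eqref{beta-error}) one has $\|\eta_j^\xi\|=\mathrm O(e^{-\alpha\rho/\eps})$ and $\|v^\xi\|_{W^{1,2}_\eps}=\mathrm O(e^{-\alpha\rho/\eps})$, while the bound $\|u^\xi_x\|=\mathrm O(\eps^{-1/2})$ is immediate. Putting these together, $\langle u^\xi_{\xi_j}/\|u^\xi_{\xi_j}\|,u^\xi_x\rangle=-\|u^\xi_{\xi_j}\|+\mathrm O(\eps^{-1/2}e^{-\alpha\rho/\eps})=-\bar q_j/\sqrt\eps+\mathrm O(\eps^{-1/2}e^{-\alpha\rho/\eps})$, while the cross terms $\langle\eta_j^\xi,u^\xi_x+v^\xi_x\rangle$ and $\langle u^\xi_{\xi_j}/\|u^\xi_{\xi_j}\|,v^\xi_x\rangle$ are all bounded by products of the exponentially small quantities above, up to inverse powers of $\eps$.

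Setting $\bar q_j+p_j^\xi:=-\sqrt\eps\,\langle\varphi_j^\xi,\hat u^\xi_x\rangle$ then produces the identity \eqref{cLC}$_1$, and the remaining error after multiplying by $-\sqrt\eps$ is of the form $\eps^{-k}e^{-\alpha\rho/\eps}$ for some fixed $k$, which is $\mathrm O(e^{-\alpha'\rho/\eps})$ for any $\alpha'<\alpha$ once $\eps$ is small, giving \eqref{cLC}$_2$ with the same generic constant convention used throughout the paper. The conceptual content is entirely the translation-invariance identity $\langle\mathscr{F}(u),u_x\rangle=0$, which furnishes exactly one constraint and so resolves the apparent mismatch between $N$ equations and $N-1$ unknowns in the bifurcation system $c(\xi)=0$; the main technical obstacle is just the bookkeeping of the $\eps^{-1/2}$ and $\eps^{-1}$ prefactors to verify that they can be absorbed into the exponential with a slight reduction of $\alpha$.
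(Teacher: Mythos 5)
Your proposal is essentially identical to the paper's own argument: both exploit the fact that $\mathscr{F}(\hat u^\xi)\cdot\hat u^\xi_x$ is an exact $x$-derivative of a $1$-periodic function (equivalently your identity $\langle\mathscr{F}(u),u_x\rangle=0$), pair this with $\mathscr{F}(\hat u^\xi)=\sum_j c_j(\xi)\varphi_j^\xi$ from \eqref{cvest}$_3$, and then evaluate $\langle\varphi_j^\xi,\hat u^\xi_x\rangle$ using $u_x^\xi=-\sum_k u^\xi_{\xi_k}$, the approximate orthonormality \eqref{approxphi}, and the exponential smallness of $v^\xi_x$ and $\eta_j^\xi$. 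The bookkeeping of $\eps^{-1/2}$ factors against $e^{-\alpha\rho/\eps}$ that you flag is handled in the paper in exactly the same implicit way via the generic-constant convention for $\alpha$.
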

\begin{proof}

Forming the inner product of \eqref{cvest}$_3$ with $u_x^\xi+v_x^\xi$ yields
\begin{equation}
0=\int_0^1\frac{d}{dx}\Big(\frac{\epsilon^2}{2}\vert u_x^\xi+v_x^\xi\vert^2-W(u^\xi+v^\xi)\Big)dx=\sum_jc_j(\xi)\langle\varphi_j^\xi,u_x^\xi+v_x^\xi\rangle,
\label{=0}
\end{equation}
where we have observed that the left hand side vanishes being the integral extended to one period of the derivative with respect to $x$ of a periodic function. Now we note that \eqref{uxi} implies $u_x^\xi=-\sum_ju_{\xi_j}^\xi$ and rewrite \eqref{=0} in the form
\[\begin{split}
&0=\sum_jc_j(\xi)\langle\varphi_j^\xi,u_x^\xi+v_x^\xi\rangle
=-\sum_jc_j(\xi)(\langle\varphi_j^\xi,\sum_i u_{\xi_i}^\xi\rangle-\langle\varphi_j^\xi,v_x^\xi\rangle)\\
&=-\sum_jc_j(\xi)(\langle\varphi_j^\xi,\sum_i \|u_{\xi_i}^\xi\|(\frac{u_{\xi_i}^\xi}{\|u_{\xi_i}^\xi\|}+\eta_i^\xi)\rangle-
\langle\varphi_j^\xi,v_x^\xi+\sum_i \|u_{\xi_i}^\xi\|\eta_i^\xi\rangle)\\
&=-\sum_jc_j(\xi)(\langle\varphi_j^\xi,\sum_i \|u_{\xi_i}^\xi\|\varphi_i^\xi\rangle+\mathrm{O}(e^{-\frac{\alpha\rho}{\eps}}))\\
&=-\sum_jc_j(\xi)(\|u_{\xi_j}^\xi\|+\mathrm{O}(e^{-\frac{\alpha\rho}{\eps}})),
\end{split}\]
Where we have used \eqref{approxphi} and $\|v_x^\xi\|=\mathrm{O}(e^{-\frac{\alpha\rho}{\eps}})$ that follows from \eqref{beta-error} and Proposition \ref{quasiinv1}. To conclude the proof we note that, see \eqref{uxixi}
\[\|u_{\xi_j}^\xi\|=\frac{1}{\epsilon^\frac{1}{2}}\bar{q}_j+\mathrm{O}(e^{-\frac{\alpha\rho}{\eps}}).\]
\end{proof}

The proof of Theorem \ref{CNS} is based on the detailed analysis of $c(\xi)$ in Theorem \ref{TH-c} and also utilizes assumption $\mathbf{H}_4$. Therefore is worthwhile to observe that, if all the components of $c(\xi)$ were equal or $c(\xi)$ were a scalar quantity,
Proposition \ref{N-1=N} would automatically imply $c(\xi)=0$ without any further analysis and without requiring
$\mathbf{H}_4$. A nontrivial situation where $c(\xi)=0$ can be deduced directly from Proposition \ref{N-1=N} arises when $W:\R^2\rightarrow\R$ is invariant under the rotation group $C_N$ of the regular polygon with $N\geq 3$ sides
\begin{equation}
W(\omega u)=W(u),\;\;u\in\R^2,\;\omega=\left(\begin{array}{l}
\cos{\frac{2\pi}{N}}\;\;-\sin{\frac{2\pi}{N}}\\
\sin{\frac{2\pi}{N}}\;\;\;\cos{\frac{2\pi}{N}}
\end{array}\right).
\label{W-inv}
\end{equation}
We assume that $W\geq 0$ has exactly $N$ zeros $\omega^{j-1}a$, $j=1,\ldots,N$ for some nondegenerate $a\in\R^2\setminus\{0\}$. These assumptions imply the existence of a minimizer $\bar{u}$ of $J(u)$ in the class of maps that connect $a$ to $\omega a$. From Proposition \ref{quasiinv1} and Proposition \ref{N-1=N} we have the following result on the existence of periodic solutions which are equivariant under $C_N$.
\begin{theorem}
\label{Equi-Per}
Assume that $W:\R^2\rightarrow\R$, $W\geq 0$, satisfies \eqref{W-inv} and that $\{W=0\}=\{a,\omega a,\ldots,\omega^{N-1} a\}$ for some nondegenerate $a\in\R^2\setminus\{0\}$. Assume that the zero eigenvalue of the operator $Lv=-v^{\prime\prime}+W_{uu}(\bar{u})v$ is simple. Then there exist $T_0>0$ and a family $u^T$, $T\geq T_0$ of $T$-periodic solutions of \eqref{newton} that satisfy
\[u^T(t+\frac{T}{N})=\omega u^T(t),\;\;t\in\R.\]
Moreover there are positive constants $k_0, K_0$ such that
\begin{equation}\begin{split}
&\|u^T-u^{T,\xi}\|_{W^{1,2}([0,1];\R^2)}\leq K_0e^{-k_0T}e^{-\frac{\mu_m}{2N}T},\\
&\vert J_{(0,T)}(u^T)-J_{(0,T)}(u^{T,\xi}\vert\leq K_0e^{-k_0T}e^{-\frac{\mu_m}{N}T},\\
&J_{(0,T)}(u^{T,\xi})=N\bar{q}^2+\mathrm{O}(e^{-\frac{\mu_m}{2N}T}),
\end{split}
\label{uT-est}
\end{equation}
where $\mu_m=\min\{\mu^-,\mu^+\}$, $\bar{q}^2=\int_\R\vert\bar{u}^\prime\vert^2ds$ and
\[\begin{split}
&u^{T,\xi}(t)=\sum_{n<0}\sum_{h=1}^N\omega^{h-1}\Big(\bar{u}(t-(n+\xi+\frac{h-1}{N})T)-a\Big)+a\\
&+\sum_{n\geq 0}\sum_{h=1}^N\omega^{h-1}\Big(\bar{u}(t-(n+\xi+\frac{h-1}{N})T)-\omega a\Big),\;\;\xi\in[0,\frac{1}{N}].
\end{split}\]
\end{theorem}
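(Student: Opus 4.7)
The plan is to apply Propositions \ref{quasiinv1} and \ref{N-1=N} on the one-parameter subfamily of $\Xi_\rho$ dictated by the $C_N$-symmetry of $W$, exploiting equivariance to reduce the $N$-dimensional bifurcation equation $c(\xi)=0$ to a single scalar equation that is then killed automatically by Proposition \ref{N-1=N}. I set $a_j:=\omega^{j-1}a$ and $\bar u_j(s):=\omega^{j-1}\bar u(s)$ for $j=1,\ldots,N$; the invariance \eqref{W-inv} gives $W_u(\omega u)=\omega W_u(u)$ and $W_{uu}(\omega u)=\omega W_{uu}(u)\omega^\top$, so each $\bar u_j$ is a minimizing heteroclinic from $a_j$ to $a_{j+1}$, and hypotheses $\mathbf{H}_1$--$\mathbf{H}_3$ hold for this data. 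I then restrict to the symmetric slice $\Xi_\rho^{\mathrm{sym}}:=\{\xi\in\Xi_\rho:\xi_j=\xi_1+(j-1)/N\}$, parametrized by $\xi=\xi_1\in[0,1/N]$, and introduce the unitary operator $(Sv)(x):=\omega^\top v(x+1/N)$ on $1$-periodic maps; it satisfies $S^N=\id$. A direct computation from \eqref{uxi} yields $u^\xi(x+1/N)=\omega u^\xi(x)$ on $\Xi_\rho^{\mathrm{sym}}$, whence $\mathscr{F}(u^\xi)$ is $S$-invariant and $[S,L^\xi]=0$.

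Differentiating the equivariance of $u^\xi$ yields $Su^\xi_{\xi_{j+1}}=u^\xi_{\xi_j}$. Since $[S,L^\xi]=0$ the spectral subspace $X^\xi$ of Proposition \ref{SpectrumL} is $S$-invariant; the circulant structure of the Gram matrix of $\{u^\xi_{\xi_j}\}$ then allows the orthonormal basis in \eqref{approxphi} to be chosen so that $S\varphi_{j+1}^\xi=\varphi_j^\xi$. Using $SN^\xi(v)=N^\xi(Sv)$ and $S\mathscr{F}(u^\xi)=\mathscr{F}(u^\xi)$, the contraction map $v\mapsto\hat v$ in the proof of Proposition \ref{quasiinv1} is $S$-equivariant (its Lagrange multipliers \eqref{ortcond} transform cyclically), so $Sv^\xi\in V^\xi$ is also a fixed point, and the uniqueness clause of that proposition forces $Sv^\xi=v^\xi$. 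Unitarity of $S$ applied to \eqref{ortcond} then gives
\[
c_{j+1}(\xi)=\langle\mathscr{F}(u^\xi)-N^\xi(v^\xi),\varphi_{j+1}^\xi\rangle
=\langle S[\mathscr{F}(u^\xi)-N^\xi(v^\xi)],S\varphi_{j+1}^\xi\rangle=c_j(\xi),
\]
so $c(\xi)=c_*(\xi)(1,\ldots,1)^\top$ on $\Xi_\rho^{\mathrm{sym}}$. Substituting into Proposition \ref{N-1=N} yields $c_*(\xi)\sum_{j=1}^N(\bar q_j+p_j^\xi)=0$; by the $C_N$-symmetry $\bar q_j=\bar q>0$ for all $j$ and $p_j^\xi=\mathrm{O}(e^{-\alpha\rho/\epsilon})$, so the sum is strictly positive for small $\epsilon$, forcing $c_*(\xi)=0$. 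Hence $u^\xi+v^\xi$ is an exact $1$-periodic solution of \eqref{rescaled} for every $\xi\in\Xi_\rho^{\mathrm{sym}}$.

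Reverting to $t=x/\epsilon$ with $T=1/\epsilon$ produces the advertised $T$-periodic family $u^T$, and the equivariance $u^T(t+T/N)=\omega u^T(t)$ is inherited from $u^\xi(x+1/N)=\omega u^\xi(x)$ together with $Sv^\xi=v^\xi$. The norm estimate in \eqref{uT-est} follows from $\|v^\xi\|_{W^{1,2}_\epsilon}=\mathrm{O}(\|\mathscr{F}(u^\xi)\|)$ (Proposition \ref{quasiinv1}) combined with Lemma \ref{est-Fuxi} specialized to the equispaced configuration $\xi_j-\xi_{j-1}=1/N$, which yields $\|\mathscr{F}(u^\xi)\|=\mathrm{O}(e^{-\mu_m T/(2N)})$; the quadratic improvement in the action estimate comes from Taylor-expanding $J_{(0,T)}(u^{T,\xi}+v^{T,\xi})-J_{(0,T)}(u^{T,\xi})$, whose linear part $-\langle\mathscr{F}(u^{T,\xi}),v^{T,\xi}\rangle$ is of order $\|\mathscr{F}(u^{T,\xi})\|^2=\mathrm{O}(e^{-\mu_m T/N})$. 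The principal technical point is arranging the orthonormal basis $\{\varphi_j^\xi\}$ compatibly with the cyclic action of $S$ and deducing $Sv^\xi=v^\xi$ from uniqueness rather than from an explicit construction; once these are in place, the reduction of the $N$-dimensional bifurcation equation to a single scalar equation is forced by equivariance, and the existence proof bypasses the delicate bifurcation analysis required for Theorem \ref{CNS}.
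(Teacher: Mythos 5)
Your proof is correct and reaches the same conclusion as the paper, but by a different route. The paper restricts the \emph{function space} at the outset: it redoes the Lyapunov--Schmidt machinery of Proposition \ref{quasiinv1} on the subspace of $1$-periodic $S$-equivariant maps (where $Sv(x)=\omega^\top v(x+\tfrac{1}{N})$), so that $L^\xi$ has a single small eigenvalue and $c(\xi)$ is scalar from the start; Proposition \ref{N-1=N} then immediately forces $c(\xi)=0$. You instead work in the full space of $1$-periodic maps, restrict the \emph{parameter} to the symmetric slice $\Xi_\rho^{\mathrm{sym}}$, and propagate equivariance through the full-space construction: you verify $[S,L^\xi]=0$, arrange the basis of $X^\xi$ so that $S\varphi_{j+1}^\xi=\varphi_j^\xi$, deduce $Sv^\xi=v^\xi$ from uniqueness of the contraction fixed point, and conclude $c_1(\xi)=\cdots=c_N(\xi)$, after which Proposition \ref{N-1=N} gives the same conclusion. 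Both routes hinge on the same structural fact --- the symmetry collapses the $N$-component bifurcation equation to a scalar one, and Proposition \ref{N-1=N} annihilates any scalar $c$. The paper's version is shorter because it never has to track the cyclic action on a basis of an $N$-dimensional eigenspace; yours makes explicit the equivariance of every intermediate object, which is more work but has the advantage of re-using Proposition \ref{quasiinv1} verbatim rather than in a restricted subspace.

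One technical point in your argument deserves to be made explicit: the orthonormal basis of $X^\xi$ produced in Lemma \ref{gammasquare0} by Gram--Schmidt does \emph{not} satisfy $S\varphi_{j+1}^\xi=\varphi_j^\xi$, because Gram--Schmidt singles out the first index. You need a symmetric orthogonalization (e.g.\ L\"owdin: $\varphi_j^\xi=\sum_k (G^{-1/2})_{kj}\frac{u_{\xi_k}^\xi}{\|u_{\xi_k}^\xi\|}$, where $G$ is the circulant Gram matrix) to get both cyclic equivariance and the $\mathrm{O}(\frac{1}{\epsilon}\max_{h,\pm}\mathscr{E}_h^\pm)$ closeness to $u_{\xi_j}^\xi/\|u_{\xi_j}^\xi\|$. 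This is achievable --- the circulant structure you invoke is exactly what makes $G^{-1/2}$ circulant and hence commute with the shift --- but the claim should not be attributed to Lemma \ref{gammasquare0} as written.
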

\begin{proof}
Set $u^\xi(\cdot)=u^{\frac{1}{\epsilon},\xi}(\frac{\cdot}{\epsilon})$. Since $\xi$ is a real parameter we find that the operator $L^\xi v=-\epsilon^2v_{xx}+W_{uu}(u^\xi)v$, restricted to the class of $1$-periodic equivariant maps
\[v(x+\frac{1}{N})=\omega v(x),\]
has a unique small eigenvalue $\lambda^\xi=\mathrm{O}(e^{-\frac{\alpha\rho}{\epsilon}})$ with corresponding eigenvector $\varphi^\xi=\frac{u_\xi^\xi}{\|u_\xi^\xi\|}+\mathrm{O}(e^{-\frac{\alpha\rho}{\epsilon}})$. Then arguing as in Proposition \ref{quasiinv1} we have that, for each $\xi$ there is $c(\xi)\in\R$ and a $1$-periodic equivariant map $v^\xi$ such that
\[\begin{split}
&\epsilon^2(u^\xi+v^\xi)_{xx}-W_u(u^\xi+v^\xi)=c(\xi)\varphi^\xi,\\
&\langle v^\xi,\varphi^\xi\rangle=0.
\end{split}\]
The reasoning of Proposition \ref{N-1=N} then shows that $c(\xi)=0$ and renders a $1$-periodic equivariant solution  $\hat{u}^\xi=u^\xi+v^\xi$ of \eqref{rescaled}. The change of variable $x=\frac{t}{T}$, $\epsilon=\frac{1}{T}$, yields the sought $T$-periodic equivariant solution $u^T$ of \eqref{newton}. The estimate \eqref{uT-est}$_3$ follows from \eqref{Juxi-est} and $J_{(0,T)}(u^{T,\xi})=\frac{1}{\epsilon}J_\epsilon(u^\xi)$. The other two estimates \eqref{uT-est} follow from \eqref{cvest}.
\end{proof}

\subsection{Discussion of the bifurcation equation}
In this subsection we derive an explicit expression for $c(\xi)$ which is basic for the proof of Theorem \ref{CNS} and for the description of layers dynamics.

We define $\bar{u}_j$ also for $j=N+1$, $s<0$ and for $j=0$ and $s>0$ by setting
\[\begin{split}
&\bar{u}_{N+1}(s)-a_{N+1}=\bar{u}_1(s)-a_1,\;\;s<0\\
&\bar{u}_0(s)-a_1=\bar{u}_N(s)-a_{N+1},\;\;s>0.
\end{split}\]
We also set
\[\xi_0=\xi_N-1,\quad\quad \xi_{N+1}=\xi_1+1.\]
\noindent These definitions imply
\begin{equation}
\begin{split}
&z_{N+1}^-=z_1^-,\quad\quad z_1^+=z_{N+1}^+,\\
&\mu_{N+1}^-=\mu_1^-,\quad\quad\mu_1^+=\mu_{N+1}^+,\\
&\bar{K}_{N+1}^-=\bar{K}_1^-,\quad\quad \bar{K}_0^+=\bar{K}_N^+,
\end{split}
\label{sigma0sigmaN}
\end{equation}
%\noindent and we can define
%\begin{equation}
%\label{varsigma}
%\varsigma_j=z_j^-\cdot z_j^+,\quad\quad k_j=\mu_j^2\bar{K}_j^-\bar{K}_{j-1}^+,\;\;j=1,\ldots,N+1.
%\end{equation}

\noindent We introduce the notation:
\begin{equation}
\begin{split}
&\mu_h=\frac{1}{2}(\mu_h^-+\mu_h^+),\;\;h=1,\ldots,N+1,\\
& k_h^\pm=\mu_h^\pm\mu_h\bar{K}_h^-\bar{K}_{h-1}^+,\;\;h=1,\ldots,N+1,\\
&\varsigma_h=z_h^-\cdot z_h^+,\;\;h=1,\ldots,N+1,
\end{split}
\label{varsigma}
\end{equation}
and
\begin{equation}
\begin{split}
&\mathscr{E}_h^\pm=e^{-\frac{\mu_h^\pm}{\epsilon}(\xi_h-\xi_{h-1})},\;\;h=1,\ldots,N+1,\\
&\mathscr{E}_h=e^{-\frac{\mu_h}{\epsilon}(\xi_h-\xi_{h-1})},\;\;h=1,\ldots,N+1.\\
\end{split}
\label{Eh}
\end{equation}

Observe that we have $\xi_1-\xi_0=\xi_{N+1}-\xi_N$ and therefore $\mathscr{E}_{N+1}^\pm=\mathscr{E}_1^\pm$. This and the identity $1=\sum_h(\xi_h-\xi_{h-1})$ imply that $\mathscr{E}_1^\pm,\ldots,\mathscr{E}_N^\pm$ are not independent and one of them is determined by the values of the remaining  $N-1$.

Note that \eqref{sigma0sigmaN} imply
\begin{equation}
\varsigma_{N+1}=\varsigma_1,\quad\quad k_{N+1}^\pm=k_1^\pm.
\label{varsigma1}
\end{equation}
 If {\bf{H}$_4$} holds ($z_h^-=\pm z_h^+$) we have
 \[\begin{split}
 &\varsigma_h=\pm 1,\\
 &\mathscr{E}_h^\pm=\mathscr{E}_h,\\
 &k_h^\pm=k_h:=\mu_h^2\bar{K}_h^-\bar{K}_{h-1}^+.
 \end{split}\]

 We prove
 \begin{theorem}
 \label{TH-c}
 There is $\epsilon_0>0$ such that for $\epsilon\in(0,\epsilon_0]$ and $\xi\in\Xi_\rho$ it results
 \begin{equation}
  c(\xi)=c^0(\xi)+r(\xi),
 \label{c0-def0}
 \end{equation}
  where
  \begin{equation}
 \begin{split}
 &c_j^0(\xi)=\frac{2\epsilon^\frac{1}{2}}{\bar{q}_j}(\varsigma_{j+1}k_{j+1}^+\mathscr{E}_{j+1}
 -\varsigma_jk_j^-\mathscr{E}_j),\;\;j=1,\ldots,N,\\
 &r_j(\xi)=\mathrm{O}(e^{-\frac{\alpha\rho}{\epsilon}}\max_{h,\pm}\mathscr{E}_h^\pm).
 \end{split}
 \label{c0-def}
 \end{equation}
 \end{theorem}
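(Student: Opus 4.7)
The plan is to start from \eqref{cvest}$_2$ of Proposition~\ref{quasiinv1}, namely $c_j(\xi) = \langle \mathscr{F}(u^\xi), \varphi_j^\xi\rangle + O(\epsilon^{-1/2}\|\mathscr{F}(u^\xi)\|^2)$. Because $\|\mathscr{F}(u^\xi)\|=O(e^{-\alpha\rho/\epsilon})$ by \eqref{beta-error}, the quadratic piece is already of order $\epsilon^{-1/2}e^{-2\alpha\rho/\epsilon}$ and fits the remainder budget. Using \eqref{approxphi} to replace $\varphi_j^\xi$ with $u_{\xi_j}^\xi/\|u_{\xi_j}^\xi\|$ and the normalization $\|u_{\xi_j}^\xi\| = \bar{q}_j\epsilon^{-1/2} + O(e^{-\alpha\rho/\epsilon})$ (a direct $L^2$ computation on \eqref{uxi} exploiting the exponential localization of $\bar{u}_j'$), the task reduces to computing $\frac{\epsilon^{1/2}}{\bar{q}_j}\langle \mathscr{F}(u^\xi), u_{\xi_j}^\xi\rangle$ at leading order and matching it with $c_j^0(\xi)$.

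I split the integral over $[0,1]$ into the subintervals $I_h = (\hat{\xi}_{h-1},\hat{\xi}_h)$. Differentiating \eqref{uxi} shows that $u_{\xi_j}^\xi$ is exponentially concentrated on $I_j$, with only $O(e^{-\alpha\rho/\epsilon})$ mass elsewhere, so contributions from $I_h$ with $h\neq j$ are absorbed into $r_j$. On $I_j$ I write $u^\xi = U_j + \phi_j$, where $U_j(x) := \bar{u}_j((x-\xi_j)/\epsilon)$ is the principal layer and $\phi_j$ collects the exponentially small contributions of all other translates in \eqref{uxi}. Since $U_j$ satisfies $\epsilon^2 U_j'' = W_u(U_j)$, Taylor expansion of $W_u$ around $U_j$ gives
\[
\mathscr{F}(u^\xi) = \epsilon^2 \phi_j'' - W_{uu}(U_j)\phi_j + O(|\phi_j|^2),
\]
and a direct derivative of \eqref{uxi} yields $u_{\xi_j}^\xi = -U_j' + O(e^{-\alpha\rho/\epsilon})$ on $I_j$. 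Differentiating $\epsilon^2 U_j'' = W_u(U_j)$ once in $x$ gives $\epsilon^2 U_j''' - W_{uu}(U_j) U_j' = 0$, so two integrations by parts collapse the bulk integral into boundary terms,
\[
\int_{I_j}\bigl(\epsilon^2 \phi_j'' - W_{uu}(U_j)\phi_j\bigr)U_j'\, dx = \epsilon^2\bigl[\phi_j' U_j' - \phi_j U_j''\bigr]_{\hat{\xi}_{j-1}}^{\hat{\xi}_j}.
\]

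Evaluating these boundary terms via \eqref{generic} is the crux of the argument. At $\hat{\xi}_j$, the dominant part of $\phi_j$ is the left tail of $U_{j+1}$ approaching $a_{j+1}$, so $\phi_j(\hat{\xi}_j) \sim \bar{K}_{j+1}^- z_{j+1}^- e^{-\mu_{j+1}^-(\xi_{j+1}-\hat{\xi}_j)/\epsilon}$, while $U_j(\hat{\xi}_j)-a_{j+1}\sim \bar{K}_j^+ z_{j+1}^+ e^{-\mu_{j+1}^+(\hat{\xi}_j-\xi_j)/\epsilon}$, with analogous formulas for the derivatives. Because $\hat{\xi}_j = (\xi_j+\xi_{j+1})/2$, the two exponents combine into $\mu_{j+1}(\xi_{j+1}-\xi_j)/\epsilon$, producing $\mathscr{E}_{j+1}$; the scalar product $z_{j+1}^-\cdot z_{j+1}^+$ produces $\varsigma_{j+1}$; and using $\mu_{j+1}^- + \mu_{j+1}^+ = 2\mu_{j+1}$, the coefficient simplifies to $-2k_{j+1}^+\varsigma_{j+1}$. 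An identical computation at $\hat{\xi}_{j-1}$, where $\phi_j$ is now dominated by the right tail of $U_{j-1}$ near $a_j$, yields $-2k_j^-\varsigma_j\mathscr{E}_j$. Combining the two boundary contributions and multiplying by the normalization factor $\epsilon^{1/2}/\bar{q}_j$ (taking into account the sign from $u_{\xi_j}^\xi = -U_j'$) reproduces exactly the claimed $c_j^0(\xi) = \frac{2\epsilon^{1/2}}{\bar{q}_j}(\varsigma_{j+1}k_{j+1}^+\mathscr{E}_{j+1} - \varsigma_j k_j^-\mathscr{E}_j)$.

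The main technical obstacle is the bookkeeping: showing that every auxiliary error fits the bound $O(e^{-\alpha\rho/\epsilon}\max_{h,\pm}\mathscr{E}_h^\pm)$. These errors have distinct origins: the quadratic $O(|\phi_j|^2)$ produces products of two $\mathscr{E}$-factors and is hence much smaller than the leading terms; the subleading $\bar{w}_j^\pm$ in \eqref{generic} decay strictly faster than the leading exponentials since $\hat{\mu}_j^\pm > \mu_j^\pm$; the tail of $u_{\xi_j}^\xi$ on $I_h$ for $h\neq j$ is $O(e^{-\alpha\rho/\epsilon})$; the correction $\eta_j^\xi$ in $\varphi_j^\xi$ has $\|\eta_j^\xi\|=O(e^{-\alpha\rho/\epsilon})$; and the $O(\epsilon^{-1/2}\|\mathscr{F}(u^\xi)\|^2)$ remainder in \eqref{cvest}$_2$ is $O(\epsilon^{-1/2}e^{-2\alpha\rho/\epsilon})$. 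Each of these contributes at worst a factor $e^{-\alpha\rho/\epsilon}$ smaller than $\max_{h,\pm}\mathscr{E}_h^\pm$, so a (possibly smaller) uniform choice of $\alpha$ validates the stated bound on $r_j(\xi)$.
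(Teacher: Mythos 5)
Your leading-order calculation is essentially the paper's computation of $\bar{c}_j(\xi)=\langle\mathscr{F}(u^\xi),u_{\xi_j}^\xi/\|u_{\xi_j}^\xi\|\rangle$: you split $u^\xi$ near $\xi_j$ into the principal layer plus tails, linearize $\mathscr{F}$, use $\epsilon^2U_j'''=W_{uu}(U_j)U_j'$ to integrate by parts down to boundary terms at $\hat\xi_{j-1},\hat\xi_j$, and read off the coefficients from \eqref{generic}. That part is fine and matches the paper's Lemma \ref{barc-lem}.

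The gap is in the final bookkeeping paragraph, and it is not cosmetic. Two of the error sources you list are controlled only by crude bounds that do \emph{not} give $\mathrm{O}(e^{-\alpha\rho/\epsilon}\max_{h,\pm}\mathscr{E}_h^\pm)$. First, you bound the contribution of $\eta_j^\xi$ by invoking $\|\eta_j^\xi\|=\mathrm{O}(e^{-\alpha\rho/\epsilon})$, the weak estimate from \eqref{approxphi}. Combined with $\|\mathscr{F}(u^\xi)\|=\mathrm{O}(e^{-\alpha\rho/\epsilon})$, this gives $|\langle\mathscr{F}(u^\xi),\eta_j^\xi\rangle|=\mathrm{O}(e^{-2\alpha\rho/\epsilon})$, which need not be $\leq Ce^{-\alpha'\rho/\epsilon}\max_{h,\pm}\mathscr{E}_h^\pm$: the right side can be as small as $e^{-c/\epsilon}$ with $c$ of order $\mu_{\max}/N$, independent of $\rho$, so for small $\rho$ it drops far below $e^{-2\alpha\rho/\epsilon}$. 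Second, the same problem occurs with the $\mathrm{O}(\epsilon^{-1/2}\|\mathscr{F}(u^\xi)\|^2)$ remainder from \eqref{cvest}$_2$, which you estimate only as $\mathrm{O}(\epsilon^{-1/2}e^{-2\alpha\rho/\epsilon})$.

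Both estimates need to carry a factor of $\max_{h,\pm}\mathscr{E}_h^\pm$ explicitly. The paper obtains this from two sharper ingredients that you do not use: Lemma \ref{est-Fuxi}, which gives the pointwise bound $|\mathscr{F}(u^\xi)|\leq Ce^{-\alpha\rho/\epsilon}\max_{h,\pm}{\mathscr{E}_h^\pm}^{1/2}$ (hence $\|\mathscr{F}(u^\xi)\|^2\leq Ce^{-\alpha\rho/\epsilon}\max_{h,\pm}\mathscr{E}_h^\pm$), and the refined Lemma \ref{gammasquare0}, which upgrades $\|\eta_j^\xi\|$ to $\mathrm{O}(\frac{1}{\epsilon}\max_{h,\pm}\mathscr{E}_h^\pm)$ via a Gram--Schmidt argument built on the projections $\gamma_j^\xi$ of $\epsilon u_{\xi_j}^\xi$ onto $X^{\xi,\perp}$. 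The paper even flags this explicitly before Lemma \ref{gammasquare0}: \emph{``the estimate for $\eta_j^\xi$ in (\ref{approxphi}) is not sufficient.''} Without these refinements your remainder budget does not close, and the theorem's $r_j(\xi)=\mathrm{O}(e^{-\alpha\rho/\epsilon}\max_{h,\pm}\mathscr{E}_h^\pm)$ is not established.
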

\noindent Note that if {\bf{H}$_4$} holds we have $\mu_h^-=\mu_h^+$, $\mathscr{E}_h=\mathscr{E}_h^\pm$,
$k_h^\pm=k_h=\mu_h^2\bar{K}_h^-\bar{K}_{h-1}^+$ and therefore
\begin{equation}
\begin{split}
&c_j^0(\xi)=\frac{2\epsilon^\frac{1}{2}}{\bar{q}_j}(\varsigma_{j+1}k_{j+1}\mathscr{E}_{j+1}
 -\varsigma_jk_j\mathscr{E}_j),\\
&r_j(\xi)=\mathrm{O}(e^{-\frac{\alpha\rho}{\epsilon}}\max_h\mathscr{E}_h)
\end{split}
\label{c0-gen}
 \end{equation}
with $\varsigma_j=\pm 1$.

\begin{remark}
From Theorem \ref{TH-c} $c_j^0(\xi)$ depends only on the differences $\xi_j-\xi_{j-1}$ and $\xi_{j+1}-\xi_j$ and we expect that, at least approximately, the same is true for $c_j(\xi)$. Actually this is not always the case. Indeed, depending on the values of the $\mu_j^\pm$ and on the spacing of $\xi_h$ the contribution of the term $\mathrm{O}(e^{-\frac{\alpha\rho}{\epsilon}}\max_{h,\pm}\mathscr{E}_h^\pm)$ may exceed $c_j^0(\xi)$ and therefore $c_j(\xi)$ may depend on $\xi_h$ for $h\not\in\{j-1,j,j+1\}$. In spite of this, the estimate provided by Theorem \ref{TH-c} is sufficient for the proof of Theorem \ref{CNS} and to derive information on layers dynamics. We also note that we don't expect that a better analysis of the difference $c_j(\xi)-c_j^0(\xi)$ is possible under our general assumptions. Improving the estimate of  $c_j(\xi)-c_j^0(\xi)$ given by Theorem \ref{TH-c} requires  specific assumptions on the values of $\mu_h^\pm$ and on the position of the $\xi_h$.
\end{remark}

\begin{proof} (Proof of Theorem \ref{TH-c}) The proof of Theorem \ref{TH-c} is quite elaborate and requires several steps and lemmas.

We begin with a detailed analysis of the expression \eqref{uxi} of $u^\xi$, $\xi\in\Xi_\rho$.

\noindent Given $1\leq j\leq N$, using the identity
\[ a_1+\sum_{h\leq j-1} \Big( \bar{u}_h( \frac{x-\xi_h}{\eps})-a_h \Big)=\sum_{h\leq j-1} \Big( \bar{u}_h( \frac{x-\xi_h}{\eps})-a_{h+1} \Big)+a_j,\]
we rewrite (\ref{uxi}) in the form
\begin{equation}
\begin{split}
u^{\xi}(x)=\displaystyle\bar{u}_{j-1}( \frac{x-\xi_{j-1}}{\eps})-a_j+\bar{u}_j( \frac{x-\xi_j}{\eps})
+\bar{u}_{j+1}( \frac{x-\xi_{j+1}}{\eps})-a_{j+1}+ \tau_j,
\end{split}
\label{uxi1}
\end{equation}

%%%%% INIZIO

where
\begin{equation}
\begin{split}
&\tau_j=\sum_{h<j-1} \Big( \bar{u}_h( \frac{x-\xi_h}{\eps})-a_{h+1}\Big)+\sum_{h>j+1} \Big( \bar{u}_h( \frac{x-\xi_h}{\eps})-a_h\Big)\\
&+\sum_{h=1}^N\sum_{n<0}\Big( \bar{u}_h( \frac{x-n-\xi_h}{\eps})-a_{h+1} \Big)
+\sum_{h=1}^N\sum_{n>0} \Big( \bar{u}_h( \frac{x-n-\xi_h}{\eps})-a_h \Big).
\end{split}
\label{tau}
\end{equation}
We also consider
\begin{equation}
\begin{split}
&\epsilon u_{\xi_j}^\xi(x)=-\bar{u}_j^\prime(\frac{x-\xi_j}{\epsilon})-\sum_{n\neq 0}\bar{u}_j^\prime(\frac{x-n-\xi_j}{\epsilon})=-\bar{u}_j^\prime(\frac{x-\xi_j}{\epsilon})+\varkappa_j,\\
&\varkappa_j=-\sum_{n\neq 0}\bar{u}_j^\prime(\frac{x-n-\xi_j}{\epsilon}).
\end{split}
\label{uxixij}
\end{equation}

\begin{lemma}
\label{est-uxi}
For small $\epsilon>0$ we have the estimates
\begin{equation}
\begin{split}
&\vert\tau_j\vert
\leq Ce^{-\frac{\alpha\rho}{\eps}}\max_{h,\pm}\mathscr{E}_h^\pm,\;x\in[\hat{\xi}_{j-1},\hat{\xi}_j],\;\;
\hat{\xi}_h=\frac{1}{2}(\xi_{h+1}+\xi_h),\\
&\vert\varkappa_j\vert\leq C(\mathscr{E}_j^-+\mathscr{E}_{j+1}^+),\;\;x\in[0,1].
\end{split}
\label{tau-est}
\end{equation}
Moreover $\vert\eps\tau_{jx}\vert$,  $\vert\eps\tau_{j\xi_j}\vert$, $\vert\eps^2\tau_{jxx}\vert$ and $\vert\eps^3\tau_{jxx\xi_j}\vert$ are bounded in $[\hat{\xi}_{j-1},\hat{\xi}_j]$ by $Ce^{-\frac{\alpha\rho}{\eps}}\max_{h,\pm}\mathscr{E}_h^\pm$. A similar statement applies to $\varkappa_j$ in $[0,1]$. We also have

\begin{equation}
\begin{split}
&\|u_{\xi_j}^\xi\|=\frac{1}{\epsilon^\frac{1}{2}}\bar{q}_j+\mathrm{O}(\mathscr{E}_j^-+\mathscr{E}_{j+1}^+),
\;\;(\bar{q}_j^2=\int_\R\vert\bar{u}_j^\prime\vert^2 ds),\\
&\|u_{\xi_i\xi_j}^\xi\|\leq\frac{C}{\epsilon^\frac{3}{2}}\delta_{ij},\\
&\| u_x^\xi\|\leq \frac{C}{\epsilon^\frac{1}{2}},\;\;
\|u_{xx}^\xi\|\leq \frac{C}{\epsilon^\frac{3}{2}},
\end{split}
\label{uxixi}
\end{equation}
\begin{equation}
J_\epsilon(u^\xi)=\epsilon\sum_j\bar{q}_j^2+\mathrm{O}(\epsilon\max_{h,\pm}{\mathscr{E}_h^\pm}^\frac{1}{2}),
\label{Juxi-est}
\end{equation}
and
\begin{equation}
\frac{\vert u_{\xi_i}^\xi(x)\cdot u_{\xi_j}^\xi(x)\vert}{\|u_{\xi_i}^\xi\|\|u_{\xi_j}^\xi\|}
\leq\frac{C}{\epsilon}\max_{h,\pm}\mathscr{E}_h^\pm,\;\;i\neq j
.\quad\quad\quad
\label{vij}
\end{equation}
\end{lemma}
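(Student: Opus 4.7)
The plan is to derive every estimate from the exponential decay \eqref{generic} of $\bar u_h$ to $a_h$ or $a_{h+1}$, combined with the gap structure $\xi_h-\xi_{h-1}>\rho/\mu_h$ built into $\Xi_\rho$. The main device is a ``splitting trick'': whenever a term in \eqref{tau} or \eqref{uxixij} is evaluated at an argument $(x-n-\xi_h)/\epsilon$ that is far from $0$, one writes $x-n-\xi_h=(\xi_{h\pm 1}-\xi_h)+(x-n-\xi_{h\pm 1})$ and factorizes the resulting exponential bound as $\mathscr{E}_{h\pm 1}^{\pm}$ times a surplus $e^{-\mu(x-n-\xi_{h\pm 1})/\epsilon}$. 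On $[\hat\xi_{j-1},\hat\xi_j]$ the $\Xi_\rho$ gap forces the surplus exponent to exceed $\rho/(2\max_k\mu_k)$, yielding an independent factor $e^{-\alpha\rho/\epsilon}$. Iterating the split across every summand of \eqref{tau} (the $h<j-1$, $h>j+1$ and $n\neq 0$ portions) produces \eqref{tau-est}$_1$. A single application of the split yields \eqref{tau-est}$_2$ for $\varkappa_j$, where no surplus $e^{-\alpha\rho/\epsilon}$ is asserted. The stated bounds for $\epsilon\tau_{jx},\epsilon\tau_{j\xi_j},\epsilon^2\tau_{jxx},\epsilon^3\tau_{jxx\xi_j}$ follow identically, since each differentiation in $x$ or $\xi_j$ brings down a $1/\epsilon$ and replaces $\bar w_h^\pm$ by $D^k\bar w_h^\pm$, both of which obey the same exponential bound by \eqref{generic}.

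For \eqref{uxixi}$_1$, I would use \eqref{uxixij} and integrate $\|u_{\xi_j}^\xi\|^2$ sub-interval by sub-interval. On $[\hat\xi_{j-1},\hat\xi_j]$ the dominant contribution is $\epsilon^{-2}|\bar u_j'((x-\xi_j)/\epsilon)|^2$; the change of variables $s=(x-\xi_j)/\epsilon$ extends the integral over $\R$ up to tails $\epsilon\int_{|s|\geq(\xi_j-\xi_{j-1})/(2\epsilon)}|\bar u_j'|^2\,ds$, which by \eqref{generic} are $O(\mathscr{E}_j^-+\mathscr{E}_{j+1}^+)$. Contributions from all other sub-intervals, together with those of $\varkappa_j/\epsilon$, are already of this order by \eqref{tau-est}$_2$; taking the square root gives \eqref{uxixi}$_1$. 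For \eqref{uxixi}$_2$: each summand in \eqref{uxi} depends on only one $\xi_h$, so $u_{\xi_i\xi_j}^\xi\equiv 0$ for $i\neq j$, while for $i=j$ the same change of variables yields $\|u_{\xi_j\xi_j}^\xi\|\leq C\epsilon^{-3/2}\|\bar u_j''\|_{L^2(\R)}$. The bounds on $\|u_x^\xi\|$ and $\|u_{xx}^\xi\|$ follow by combining the decomposition \eqref{uxi1} on each $[\hat\xi_{j-1},\hat\xi_j]$ with the already-derived bounds for $\tau_j,\tau_{jx},\tau_{jxx}$ and the same change of variables applied to $\bar u_j',\bar u_j''$.

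For \eqref{Juxi-est}, I would split $J_\epsilon(u^\xi)=\sum_j\int_{\hat\xi_{j-1}}^{\hat\xi_j}(\epsilon^2|u_x^\xi|^2+W(u^\xi))\,dx$. On each sub-interval I substitute \eqref{uxi1}, treating $\bar u_j((x-\xi_j)/\epsilon)$ as reference and expanding $W$ to first order around it. The boundary remainders $\bar u_{j\pm 1}((x-\xi_{j\pm 1})/\epsilon)-a_{j,j+1}$ are of $L^2$-size $(\mathscr{E}^\pm_*)^{1/2}\epsilon^{1/2}$ (the square root arises from integrating $e^{-2\mu(x-\xi)/\epsilon}$ over a width-$O(1)$ interval), and $\tau_j$ is strictly smaller. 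The leading contribution $\epsilon\int_\R(\tfrac12|\bar u_j'|^2+W(\bar u_j))\,ds=\epsilon\bar q_j^2$ follows from equipartition $\int W(\bar u_j)\,ds=\tfrac12\int|\bar u_j'|^2\,ds$ and change of variables; summing gives \eqref{Juxi-est}. Finally, \eqref{vij} follows because, for $i\neq j$, $u_{\xi_i}^\xi$ and $u_{\xi_j}^\xi$ are exponentially localized near $\xi_i$ and $\xi_j$ respectively: their $L^2$ inner product is controlled by the values of $|\bar u_i'|$ and $|\bar u_j'|$ near the separating midpoint $\hat\xi_{\min(i,j)}$, each of which is pointwise $O(\mathscr{E}_*^\pm/\epsilon)$, and dividing by $\|u_{\xi_i}^\xi\|\|u_{\xi_j}^\xi\|\sim \bar q_i\bar q_j/\epsilon$ produces the stated bound.

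The proof is essentially bookkeeping. The main obstacle, and the only genuinely delicate point, is ensuring that the splitting trick produces an honest $e^{-\alpha\rho/\epsilon}$ slack beyond $\max_{h,\pm}\mathscr{E}_h^\pm$ in every sub-case of the double sum \eqref{tau} --- in particular for the $n\neq 0$ periodic tails, where two or more successive splits may be required --- and similarly checking that no uncancelled $1/\epsilon$ factor survives in the derivative estimates after multiplication by the prescribed power of $\epsilon$; nothing beyond \eqref{generic} and the geometry of $\Xi_\rho$ is needed.
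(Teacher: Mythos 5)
Your arguments for \eqref{tau-est}, the derivative bounds, \eqref{uxixi} and \eqref{Juxi-est} are essentially the same route the paper takes: everything is driven by the exponential decay in \eqref{generic} and the gap $\xi_h-\xi_{h-1}>\rho/\mu_h$ built into $\Xi_\rho$, and your ``splitting trick'' is exactly the manipulation the paper performs case by case on the four blocks of \eqref{tau1} (writing, e.g., $x-n-\xi_h\leq -(\tfrac{\xi_{j+1}-\xi_j}{2}+\xi_h-\xi_{h-1}+n)$ to peel off a surplus $e^{-\alpha\rho/\epsilon}$ and a geometric series in $n$). Your slightly different bookkeeping for \eqref{Juxi-est} (Taylor-expanding $W$ around $\bar u_j$ rather than combining $u_x^\xi=-\sum_j u_{\xi_j}^\xi$ with the already-proved near-orthogonality) is not a substantive deviation.

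There is, however, a genuine gap in your treatment of \eqref{vij}. First, \eqref{vij} is a \emph{pointwise} bound, uniform in $x\in[0,1]$; it is not a statement about the $L^2$ inner product $\langle u_{\xi_i}^\xi,u_{\xi_j}^\xi\rangle$, so the phrase ``their $L^2$ inner product is controlled by the values near the separating midpoint'' addresses the wrong object. More importantly, restricting attention to the midpoint $\hat\xi_{\min(i,j)}$ cannot yield the required supremum bound: on $[\xi_i,\xi_j]$ (say $i<j$) the dominant term in the numerator is $\frac{1}{\epsilon^2}e^{-\mu_{i+1}^+(x-\xi_i)/\epsilon}e^{-\mu_j^-(\xi_j-x)/\epsilon}$, whose exponent is \emph{linear} in $x$, so that unless $\mu_{i+1}^+=\mu_j^-$ the maximum is attained at an endpoint of $[\xi_i,\xi_j]$, where one factor is $O(1)$ and the other gives $e^{-\mu_j^-(\xi_j-\xi_i)/\epsilon}\leq\mathscr{E}_j^-$ or $e^{-\mu_{i+1}^+(\xi_j-\xi_i)/\epsilon}\leq\mathscr{E}_{i+1}^+$. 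At the midpoint each factor is only $O\big((\mathscr{E}_*^\pm)^{1/2}/\epsilon\big)$, not $O(\mathscr{E}_*^\pm/\epsilon)$ as you assert, and the product there is strictly smaller than the boundary value, so your argument proves the inequality at one point rather than everywhere. The paper's proof of \eqref{vij} makes precisely this ``linear-exponent maximized at the boundary'' observation, combined with $\xi_j-\xi_i\geq\max(\xi_j-\xi_{j-1},\,\xi_{i+1}-\xi_i)$, and then repeats it on the complementary arc $[\xi_j,\xi_i+1]$, which periodicity forces you to treat as well.
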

\begin{proof}
See Section \ref{Pf}.
\end{proof}

Next we derive accurate estimates for $\mathscr{F}(u^\xi)=\epsilon^2u_{xx}^\xi-W_u(u^\xi)$:
\begin{lemma}
\label{est-Fuxi}
We have
\begin{equation}
\begin{split}
&\vert\mathscr{F}(u^\xi)\vert\leq Ce^{-\frac{\alpha\rho}{\eps}}\max_{h,\pm}{\mathscr{E}_h^\pm}^\frac{1}{2},
\;\;x\in[0,1],\\
&\vert\mathscr{F}_{\xi_j}(u^\xi)\vert\leq \frac{C}{\eps}\max_{h,\pm}\mathscr{E}_h^\pm,
\;\;x\in[0,1].\\
\end{split}
\label{scrF}
\end{equation}

\end{lemma}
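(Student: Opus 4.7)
The plan is to work inside each interval $I_j := [\hat{\xi}_{j-1},\hat{\xi}_j]$ separately, exploiting the local representation \eqref{uxi1}. Writing $u^\xi = \bar u_j(\tfrac{x-\xi_j}{\epsilon}) + \delta u$ in $I_j$, where
\[
\delta u = \bigl[\bar u_{j-1}(\tfrac{x-\xi_{j-1}}{\epsilon})-a_j\bigr] + \bigl[\bar u_{j+1}(\tfrac{x-\xi_{j+1}}{\epsilon})-a_{j+1}\bigr] + \tau_j,
\]
the computation of $\epsilon^2 u^\xi_{xx}$ is transparent because each $\bar u_h$ satisfies the ODE $\bar u_h''=W_u(\bar u_h)$. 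Consequently
\[
\mathscr{F}(u^\xi) = W_u(\bar u_{j-1}) + W_u(\bar u_{j+1}) + \bigl[W_u(\bar u_j) - W_u(\bar u_j + \delta u)\bigr] + \epsilon^2\tau_{j,xx},
\]
after using $W_u(a_j)=W_u(a_{j+1})=0$.

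To obtain the first estimate, I would invoke the exponential tails \eqref{generic}: in $I_j$ the worst contribution to $|\bar u_{j-1}(\tfrac{x-\xi_{j-1}}{\epsilon})-a_j|$ occurs at $x=\hat{\xi}_{j-1}$, where it is of order $(\mathscr{E}_j^+)^{1/2}$; symmetrically, $|\bar u_{j+1}(\tfrac{x-\xi_{j+1}}{\epsilon})-a_{j+1}|$ is bounded by $(\mathscr{E}_{j+1}^-)^{1/2}$. A Taylor expansion around the relevant minimum gives $|W_u(\bar u_{j\pm 1})|\leq C|\bar u_{j\pm 1}-a_{\cdot}|$, and a Taylor expansion of $W_u(\bar u_j+\delta u)$ around $\bar u_j$ controls the difference $W_u(\bar u_j)-W_u(\bar u_j+\delta u)$ by $C|\delta u|$, of the same order. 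The term $\epsilon^2\tau_{j,xx}$ is controlled by Lemma \ref{est-uxi} and is in fact strictly better than what is required. To extract the explicit $e^{-\alpha\rho/\epsilon}$ factor from the half-exponents $(\mathscr{E}_h^\pm)^{1/2}$ I would use the spacing constraint $\xi_h-\xi_{h-1}>\rho/\mu_h$ built into $\Xi_\rho$: a fraction of each exponent can be traded for $e^{-\alpha\rho/\epsilon}$ with a suitably small $\alpha$, leaving the remaining factor $\max_{h,\pm}(\mathscr{E}_h^\pm)^{1/2}$.

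For the derivative bound, differentiating $\mathscr{F}(u^\xi)$ gives $\mathscr{F}_{\xi_j}(u^\xi) = \epsilon^2 u^\xi_{xx\xi_j} - W_{uu}(u^\xi)u^\xi_{\xi_j}$. I would substitute the representation \eqref{uxixij} for $u^\xi_{\xi_j}$ and use the identity $\bar u_j''' = W_{uu}(\bar u_j)\bar u_j'$ obtained by differentiating the heteroclinic ODE. Modulo controllable $\varkappa_j$-contributions this yields
\[
\mathscr{F}_{\xi_j}(u^\xi) = \tfrac{1}{\epsilon}\bigl[W_{uu}(u^\xi)-W_{uu}(\bar u_j(\tfrac{x-\xi_j}{\epsilon}))\bigr]\bar u_j'(\tfrac{x-\xi_j}{\epsilon}) + \text{(lower order)}.
\]
The key mechanism explaining why here the full $\mathscr{E}_h^\pm$ (rather than its square root) arises is the combination of two complementary exponentials: the factor $|W_{uu}(u^\xi)-W_{uu}(\bar u_j)|\leq C|\delta u|$ decays away from the layer at $\xi_{j-1}$ or $\xi_{j+1}$, while $|\bar u_j'(\tfrac{x-\xi_j}{\epsilon})|$ decays away from $\xi_j$. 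On $[\hat{\xi}_{j-1},\xi_j]$ the product of these two is (in the symmetric case $\mu_j^- = \mu_j^+$) exactly $\mathscr{E}_j$, independent of $x$; analogously on $[\xi_j,\hat{\xi}_j]$. The $1/\epsilon$ prefactor in the bound comes from the explicit $1/\epsilon$ in $u^\xi_{\xi_j}$.

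The main obstacle is the bookkeeping in this last step: one has to verify uniformly in $x\in I_j$ that the two exponential factors in the product combine correctly, especially when the eigenvalues $\mu_j^-$ and $\mu_j^+$ differ, in which case $I_j$ must be split at $\xi_j$ and each side estimated with the correct exponent; this is where the $\max_{h,\pm}\mathscr{E}_h^\pm$ ultimately appears. The $\varkappa_j$-contributions and the differentiability statements needed to commute derivatives are handled by the last estimates of Lemma \ref{est-uxi}, so they add nothing essential beyond verification.
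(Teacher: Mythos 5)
Your decomposition of $\mathscr{F}(u^\xi)$ on $I_j=[\hat\xi_{j-1},\hat\xi_j]$ is the same one the paper uses, and your treatment of the derivative bound is essentially the paper's: the $1/\epsilon$ comes out of $u^\xi_{\xi_j}$, the difference $W_{uu}(u^\xi)-W_{uu}(\bar u_j)$ decays away from the adjacent layers while $\bar u_j'$ decays away from $\xi_j$, and multiplying these two opposite decays on each half of $I_j$ yields the full exponent $\mathscr{E}_h^\pm$; that part is sound.

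However, there is a genuine gap in your argument for the \emph{first} estimate. After the decomposition you bound the two remaining terms
$W_u(\bar u_{j\pm 1})$ and $W_u(\bar u_j)-W_u(\bar u_j+\delta u)$ \emph{separately}, each by $C\max_{h,\pm}(\mathscr{E}_h^\pm)^{1/2}$, and then try to produce the extra factor $e^{-\alpha\rho/\epsilon}$ by ``trading a fraction of the exponent.'' This trading does not work: if $(\mathscr{E}_j^+)^{1/2}$ already equals $\max_{h,\pm}(\mathscr{E}_h^\pm)^{1/2}$ (which is allowed by the definition of $\Xi_\rho$), writing $(\mathscr{E}_j^+)^{1/2}=(\mathscr{E}_j^+)^{\theta}(\mathscr{E}_j^+)^{1/2-\theta}$ gives a factor $e^{-\alpha\rho/\epsilon}$ at the price of a \emph{larger} remaining factor $(\mathscr{E}_j^+)^{1/2-\theta}>(\mathscr{E}_j^+)^{1/2}$, not a smaller one. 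So the naive term-by-term bound only yields $C\max_{h,\pm}(\mathscr{E}_h^\pm)^{1/2}$, missing the extra exponential smallness the lemma claims.

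The missing idea is a cancellation. On $[\hat\xi_{j-1},\xi_j]$ both $\bar u_{j-1}$ and $\bar u_j$ are close to $a_j$, and the leading (linear) part of $W_u(\bar u_{j-1})$, namely $W_{uu}(a_j)(\bar u_{j-1}-a_j)$, cancels against the leading linear part of $W_u(\bar u_j)-W_u(\bar u_j+\delta u)$, which is $-W_{uu}(a_j)(\bar u_{j-1}-a_j)$ up to the negligible $\bar u_{j+1}$ and $\tau_j$ contributions. What survives is of the order of the \emph{product} $|\bar u_{j-1}-a_j|\cdot|\bar u_j-a_j|$ plus the correction $|\bar u_{j-1}-a_j|\cdot|\bar u_{j-1}-a_j|$, i.e.\ full exponentials of the type $\mathscr{E}_j$, $\mathscr{E}_j^+$, together with the subleading tail of $\bar u_{j-1}$ whose rate $\hat\mu_{j-1}^+>\mu_j^+$ supplies the extra $e^{-\alpha\rho/\epsilon}$. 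The paper makes this explicit by replacing $W_{uu}(\bar u_j)$ by $W_{uu}(a_j)$ and then pairing $W_{uu}(a_j)(\bar u_{j-1}-a_j)$ with $\bar u_{j-1}''$ via \eqref{2-der}, so that only the subleading tail remains. Your proof must exhibit this cancellation; without it, the claimed bound cannot be reached from the individual half-exponent estimates.
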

\begin{proof}

1. Set $U(x)=\bar{u}_{j-1}( \frac{x-\xi_{j-1}}{\eps})-a_j
+\bar{u}_{j+1}( \frac{x-\xi_{j+1}}{\eps})-a_{j+1}$.
From \eqref{generic} we have
\begin{equation}
\begin{split}
&x\geq\xi_{j-1}\;\;\Rightarrow\\
&\vert\bar{u}_{j-1}(\frac{x-\xi_{j-1}}{\epsilon})-a_j\vert\leq Ce^{-\frac{\mu_j^+}{\epsilon}(x-\xi_{j-1})},\\
&x\leq\xi_{j+1}\;\;\Rightarrow\\
&\vert\bar{u}_{j+1}(\frac{x-\xi_{j+1}}{\epsilon})-a_{j+1}\vert\leq Ce^{-\frac{\mu_{j+1}^-}{\epsilon}(\xi_{j+1}-x)}.\\
\end{split}
\label{uj-aj}
\end{equation}
It follows
\begin{equation}
\begin{split}
&\vert U\vert\leq C({\mathscr{E}_j^+}^\frac{1}{2}+{\mathscr{E}_{j+1}^-}^\frac{1}{2}),\;\;x\in[\hat{\xi}_{j-1},\hat{\xi}_j],\\
&\vert U\vert^2\leq C(\mathscr{E}_j^++\mathscr{E}_{j+1}^-),\;\;x\in[\hat{\xi}_{j-1},\hat{\xi}_j]
\end{split}
\label{Uquadro}
\end{equation}

From \eqref{uxi1} we have
\begin{equation}
\begin{split}
&-\mathscr{F}(u^\xi)=W_{u}(\bar{u}_j(\frac{x-\xi_j}{\eps})+U+\tau_j)
-\bar{u}_j^{\prime\prime}(\frac{x-\xi_j}{\eps})-\epsilon^2U_{xx}-\epsilon^2\tau_{jxx}\\
&=W_{u}(\bar{u}_j(\frac{x-\xi_j}{\eps})+U)-W_{u}(\bar{u}_j(\frac{x-\xi_j}{\eps}))-\epsilon^2U_{xx}+\tau_j^1
\;\;x\in[\hat{\xi}_{j-1},\hat{\xi}_j].
\end{split}
\label{beta0}
\end{equation}
where we have used $\bar{u}_j^{\prime\prime}=W_{u}(\bar{u}_j)$ and denoted $\tau_j^1$ a quantity that satisfies \eqref{tau-est}. We have
\begin{equation}
\begin{split}
&-\mathscr{F}(u^\xi)=\int_0^1W_{uu}(\bar{u}_j(\frac{x-\xi_j}{\eps})+sU)Uds-\epsilon^2U_{xx}+\tau_j^1\\
&=W_{uu}(\bar{u}_j(\frac{x-\xi_j}{\eps}))U-\epsilon^2U_{xx}+\mathrm{O}(\vert U\vert^2)+\tau_j^1,
\;\;x\in[\hat{\xi}_{j-1},\hat{\xi}_j],
\end{split}
\label{beta.}
\end{equation}
and using \eqref{Uquadro}
\begin{equation}
-\mathscr{F}(u^\xi)=W_{uu}(\bar{u}_j(\frac{x-\xi_j}{\eps}))U-\epsilon^2U_{xx}
+\mathrm{O}(\mathscr{E}_j^++\mathscr{E}_{j+1}^-)+\tau_j^1,
\;\;x\in[\hat{\xi}_{j-1},\hat{\xi}_j].
\label{beta}
\end{equation}

We now continue the analysis of $\mathscr{F}(u^\xi)$ separately in the intervals $[\hat{\xi}_{j-1},\xi_j]$ and $[\xi_j,\hat{\xi}_j]$. We observe that, since ${\mu_h^\pm}^2$ is the eigenvalue of $W_{uu}(a_h)$ associated to the eigenvector $z_h^\pm$, \eqref{generic} implies
\begin{equation}
\begin{split}
&\bar{u}_h^{\prime\prime}(s)=W_{uu}(a_h)(\bar{u}_h(s)-a_h)+\mathrm{O}(e^{\hat{\mu}_h^-s}),\;\;s\leq 0,\\
&\bar{u}_{h-1}^{\prime\prime}(s)=W_{uu}(a_h)(\bar{u}_{h-1}(s)-a_h)+\mathrm{O}(e^{-\hat{\mu}_{h-1}^+s}),\;\;s\geq 0.
\end{split}
\label{2-der}
\end{equation}

Writing simply $\bar{u}_h$ instead of $\bar{u}_h(\frac{x-\xi_h}{\eps})$ we have
\begin{equation}
\begin{split}
&W_{uu}(\bar{u}_j)U-\epsilon^2U_{xx}
=W_{uu}(a_j)U-\epsilon^2U_{xx}+\mathrm{Int}\\
&=W_{uu}(a_j)(\bar{u}_{j-1}-a_j)-\bar{u}_{j-1}^{\prime\prime}
+W_{uu}(a_j)(\bar{u}_{j+1}-a_{j+1})-\bar{u}_{j+1}^{\prime\prime}+\mathrm{Int},
\end{split}
\label{beta-sin}
\end{equation}
where
\begin{equation}
\begin{split}
&\vert\mathrm{Int}\vert=\vert \int_0^1W_{uu}(a_j+s(\bar{u}_j-a_j))(\bar{u}_j-a_j)dsU\vert\\
&\leq C\vert\bar{u}_j-a_j\vert\vert U\vert\leq
 Ce^{\frac{\mu_j^-}{\eps}(x-\xi_j)}(e^{-\frac{\mu_j^+}{\eps}(x-\xi_{j-1})}
+e^{\frac{\mu_{j+1}^-}{\eps}(x-\xi_{j+1})})\\
&\leq C(\mathscr{E}_j+\mathscr{E}_j^+
+\mathscr{E}_{j+1}^-),\;\;x\in[\hat{\xi}_{j-1},\xi_j].
\end{split}
\label{Int}
\end{equation}
where we have assumed $x\in[\hat{\xi}_{j-1},\xi_j]$ and also used \eqref{uj-aj}.
From \eqref{2-der} and \eqref{generic}, we also have
\[
\begin{split}
&\vert W_{uu}(a_j)(\bar{u}_{j-1}-a_j)-\bar{u}_{j-1}^{\prime\prime}\vert\leq Ce^{-\frac{\hat{\mu}_{j-1}^+}{\eps}(x-\xi_{j-1})}\\
&\leq Ce^{-\frac{\hat{\mu}_{j-1}^+}{2\eps}(\xi_j-\xi_{j-1})}
\leq Ce^{-\frac{\hat{\mu}_{j-1}^+-\mu_j^+}{2\eps}(\xi_j-\xi_{j-1})}{\mathscr{E}_j^+}^\frac{1}{2},\;\;x\in[\hat{\xi}_{j-1},\xi_j],
\end{split}
\]
 and
\[
\begin{split}
&\vert W_{uu}(a_j)(\bar{u}_{j+1}-a_{j+1})\vert+\vert\bar{u}_{j+1}^{\prime\prime}\vert\\
&\leq Ce^{\frac{\mu_{j+1}^-}{\eps}(x-\xi_{j+1})}\leq C\mathscr{E}_{j+1}^-,
\;\; x\in[\hat{\xi}_{j-1},\xi_j].
\end{split}
\]

These estimates and \eqref{beta-sin} yield
\begin{equation}
\begin{split}
&\vert W_{uu}(\bar{u}_j)U-\epsilon^2U_{xx}\vert\leq C(e^{-\frac{\hat{\mu}_{j-1}^+-\mu_j^+}{2\eps}(\xi_j-\xi_{j-1})}{\mathscr{E}_j^+}^\frac{1}{2}+\mathscr{E}_j
+\mathscr{E}_j^++\mathscr{E}_{j+1}^-),\;\;x\in[\hat{\xi}_{j-1},\xi_j],
\end{split}
\label{beta-sin1}
\end{equation}

A similar computation gives
\begin{equation}
\begin{split}
&\vert W_{uu}(\bar{u}_j)U-\epsilon^2U_{xx}\vert\leq C(e^{-\frac{\hat{\mu}_{j+1}^--\mu_{j+1}^-}{2\eps}(\xi_{j+1}-\xi_j)}{\mathscr{E}_{j+1}^-}^\frac{1}{2}+\mathscr{E}_{j+1}
+\mathscr{E}_j^++\mathscr{E}_{j+1}^-),\;\;x\in[\xi_j,\hat{\xi}_{j+1}].
\end{split}
\label{beta-des1}
\end{equation}

From \eqref{beta}, \eqref{beta-sin1} and \eqref{beta-des1} we finally conclude
\[\vert\mathscr{F}(u^\xi)\vert\leq Ce^{-\frac{\alpha\rho}{\eps}}\max_{h,\pm}{\mathscr{E}_h^\pm}^\frac{1}{2}
+\vert\tau_j^1\vert\leq Ce^{-\frac{\alpha\rho}{\eps}}\max_{h,\pm}{\mathscr{E}_h^\pm}^\frac{1}{2}
,\;\;x\in[\hat{\xi}_{j-1},\hat{\xi}_j].\]
where we have used \eqref{tau-est}. Since this is valid for all $j$ \eqref{scrF}$_1$ follows.

To estimate $\mathscr{F}_{\xi_j}(u^\xi)$ we differentiate the expression of $\mathscr{F}(u^\xi)$ given in \eqref{beta0} and observe that $U_{\xi_j}=U_{xx\xi_j}=0$ to obtain
\[
-\mathscr{F}_{\xi_j}(u^\xi)=-\Big(W_{uu}(\bar{u}_j+U+\tau_j)-W_{uu}(\bar{u}_j)\Big)
\frac{1}{\eps}\bar{u}_j^\prime
+W_{uu}(\bar{u}_j+U+\tau_j)\tau_{j\xi_j}+\epsilon^2\tau_{jxx\xi_j},
\]
where we have used $\bar{u}_j^{\prime\prime\prime}=W_{uu}(\bar{u}_j)\bar{u}_j^\prime$.
It follows
\[\vert\mathscr{F}_{\xi_j}(u^\xi)\vert\leq\frac{C}{\epsilon}\Big((\vert U\vert+\vert\tau_j\vert)\vert\bar{u}_j^\prime\vert+\epsilon\vert\tau_{j\xi_j}\vert+\epsilon^3\vert\tau_{jxx\xi_j}\vert\Big)\]
As in the estimate \eqref{Int} for $\mathrm{Int}$, from \eqref{generic} and \eqref{uj-aj} it follows
\[\begin{split}
&\vert U\vert\vert\bar{u}_j^\prime\vert\leq Ce^{\frac{\mu_j^-}{\epsilon}(x-\xi_j)}(e^{-\frac{\mu_j^+}{\epsilon}(x-\xi_{j-1})}
+e^{\frac{\mu_{j+1}^-}{\epsilon}(x-\xi_{j+1})})\\
&\leq C\max_{h,\pm}\mathscr{E}_h^\pm
,\;\;x\in[\hat{\xi}_{j-1},\xi_j],\\
&\vert U\vert\vert\bar{u}_j^\prime\vert\leq
Ce^{-\frac{\mu_j^+}{\epsilon}(x-\xi_j)}(e^{-\frac{\mu_j^+}{\epsilon}(x-\xi_{j-1})}
+e^{\frac{\mu_{j+1}^-}{\epsilon}(x-\xi_{j+1})})\\
&\leq C\max_{h,\pm}\mathscr{E}_h^\pm ,\;\;x\in[\xi_j,\hat{\xi}_j].
\end{split}\]
From this and the estimates for $\tau_j$, $\epsilon\tau_{j\xi_j}$ and $\epsilon^3\tau_{jxx\xi_j}$ the bound for $\mathscr{F}_{\xi_j}(u^\xi)$ follows.
The proof is complete
\end{proof}

\vskip.2cm
A first step in the analysis of $c(\xi)$ is an asymptotic formula for $\bar{c}(\xi)$ defined by
\begin{equation}
\bar{c}_j(\xi)=\langle\mathscr{F}(u^\xi),\frac{u_{\xi_j}^\xi}{\| u_{\xi_j}^\xi\|}\rangle,\;\;j=1,\ldots,N.
\label{cbar-def}
\end{equation}

\begin{lemma}
\label{barc-lem}
 There is $\epsilon_0>$ such that, for $\epsilon\in(0,\epsilon_0]$, it results
 \[\begin{split}
&\bar{c}_j(\xi)=\frac{2\epsilon^\frac{1}{2}}{\bar{q}_j}\Big(\varsigma_{j+1}k_{j+1}^+\mathscr{E}_{j+1}
-\varsigma_jk_j^-\mathscr{E}_j
\,+\mathrm{O}(e^{-\frac{\alpha\rho}{\eps}}\max_{h,\pm}\mathscr{E}_h^\pm)\Big),\;\;j=1,\ldots,N.
\end{split}\]

where $\varsigma_j$ and $k_j^\pm$ are defined in \eqref{varsigma} and  $\bar{q}_j^2=\int_\R\vert\bar{u}_j^\prime\vert^2ds$.
\end{lemma}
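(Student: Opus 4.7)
My plan is to compute $\bar c_j(\xi)=\langle\mathscr F(u^\xi),u^\xi_{\xi_j}/\|u^\xi_{\xi_j}\|\rangle$ by localising the inner product to a neighborhood of $\xi_j$, inserting the decomposition of $\mathscr F(u^\xi)$ produced in the proof of Lemma \ref{est-Fuxi}, and reducing the resulting bulk integral to boundary contributions via integration by parts, exploiting that $\bar u_j'$ lies in the kernel of $L_j$.

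Set $\varphi(x):=\bar u_j'((x-\xi_j)/\epsilon)$. By \eqref{uxixij} one has $\epsilon u^\xi_{\xi_j}=-\varphi+\varkappa_j$ with $\varkappa_j$ pointwise bounded by $C(\mathscr E_j^-+\mathscr E_{j+1}^+)$, and Lemma \ref{est-uxi} gives $\|u^\xi_{\xi_j}\|=\bar q_j/\epsilon^{1/2}+\mathrm{O}(\mathscr E_j^-+\mathscr E_{j+1}^+)$. Combined with the pointwise bound $|\mathscr F(u^\xi)|\le Ce^{-\alpha\rho/\epsilon}\max_{h,\pm}(\mathscr E_h^\pm)^{1/2}$ from \eqref{scrF}, the exponential decay of $\varphi$ outside $[\hat\xi_{j-1},\hat\xi_j]$ supplied by \eqref{generic}, and elementary $L^\infty$--$L^1$ estimates, I would show that replacing $u^\xi_{\xi_j}/\|u^\xi_{\xi_j}\|$ by $-\varphi/(\bar q_j\epsilon^{1/2})$ and restricting the domain to $[\hat\xi_{j-1},\hat\xi_j]$ alters $\bar c_j$ only by $\tfrac{2\epsilon^{1/2}}{\bar q_j}\mathrm{O}(e^{-\alpha\rho/\epsilon}\max_{h,\pm}\mathscr E_h^\pm)$; it therefore suffices to compute $-\tfrac{1}{\bar q_j\epsilon^{1/2}}\int_{\hat\xi_{j-1}}^{\hat\xi_j}\mathscr F(u^\xi)\cdot\varphi\,dx$.

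On $[\hat\xi_{j-1},\hat\xi_j]$ the proof of Lemma \ref{est-Fuxi} yields
\[
-\mathscr F(u^\xi)=W_{uu}\bigl(\bar u_j(\tfrac{\cdot-\xi_j}{\epsilon})\bigr)U-\epsilon^2 U_{xx}+R,\qquad U=\bar u_{j-1}(\tfrac{\cdot-\xi_{j-1}}{\epsilon})-a_j+\bar u_{j+1}(\tfrac{\cdot-\xi_{j+1}}{\epsilon})-a_{j+1},
\]
with $|R|\le Ce^{-\alpha\rho/\epsilon}\max_{h,\pm}\mathscr E_h^\pm$. The identity $L_j\bar u_j'=0$ rescales to $-\epsilon^2\varphi_{xx}+W_{uu}(\bar u_j(\tfrac{\cdot-\xi_j}{\epsilon}))\varphi=0$, so two integrations by parts together with the symmetry of $W_{uu}$ collapse the bulk integral to $\epsilon^2[U\cdot\varphi_x-U_x\cdot\varphi]_{\hat\xi_{j-1}}^{\hat\xi_j}$. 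To evaluate the lower endpoint, the only non-negligible summand in $U$ comes from $\bar u_{j-1}$, and \eqref{generic} at $s=-(\xi_j-\xi_{j-1})/(2\epsilon)$ and $s=(\xi_j-\xi_{j-1})/(2\epsilon)$ gives, to leading order, $U\sim\bar K_{j-1}^+z_j^+(\mathscr E_j^+)^{1/2}$, $\epsilon U_x\sim -\mu_j^+\bar K_{j-1}^+z_j^+(\mathscr E_j^+)^{1/2}$, $\varphi\sim\mu_j^-\bar K_j^-z_j^-(\mathscr E_j^-)^{1/2}$, $\epsilon\varphi_x\sim(\mu_j^-)^2\bar K_j^-z_j^-(\mathscr E_j^-)^{1/2}$. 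Using $z_j^-\cdot z_j^+=\varsigma_j$, $(\mathscr E_j^-)^{1/2}(\mathscr E_j^+)^{1/2}=\mathscr E_j$, and the algebraic identity $(\mu_j^-)^2+\mu_j^-\mu_j^+=\mu_j^-(\mu_j^-+\mu_j^+)=2\mu_j^-\mu_j$, one obtains $\epsilon^2(U\cdot\varphi_x-U_x\cdot\varphi)\big|_{\hat\xi_{j-1}}=2\epsilon\,\varsigma_j k_j^-\mathscr E_j+\text{error}$; the symmetric computation at $\hat\xi_j$ yields $2\epsilon\,\varsigma_{j+1}k_{j+1}^+\mathscr E_{j+1}+\text{error}$. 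Taking the difference of the two endpoints and dividing by $-\bar q_j\epsilon^{1/2}$ produces the formula claimed in the lemma.

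The calculation is not conceptually subtle once the right objects are in place; the main technical obstacle is the bookkeeping of subleading contributions --- the $\varkappa_j$ correction in $u^\xi_{\xi_j}$, the remainder $R$, the cross term at $\hat\xi_{j-1}$ involving $\bar u_{j+1}$ (dominated by the compounded exponential $e^{-\mu_j^-(\xi_j-\xi_{j-1})/(2\epsilon)}e^{-\mu_{j+1}^-(\xi_{j+1}-\xi_j)/(2\epsilon)}$), and the $\bar w_j^\pm$ remainders in \eqref{generic} whose strictly faster rates $\hat\mu_j^\pm>\mu_j^\pm$ are exactly what makes them harmless --- each of which must be absorbed into $\mathrm{O}(e^{-\alpha\rho/\epsilon}\max_{h,\pm}\mathscr E_h^\pm)$ via the bounds of Lemmas \ref{est-uxi} and \ref{est-Fuxi}. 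The substantive observation is the cancellation combining $U\cdot\varphi_x$ with $-U_x\cdot\varphi$: it is precisely this combination that replaces the naive prefactor $\mu_j^\pm$ by the symmetric mean $\mu_j=(\mu_j^-+\mu_j^+)/2$, producing the coefficients $k_j^\pm=\mu_j^\pm\mu_j\bar K_j^-\bar K_{j-1}^+$ appearing in the statement.
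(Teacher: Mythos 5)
Your proposal is correct and follows essentially the same route as the paper's proof: localize to $[\hat{\xi}_{j-1},\hat{\xi}_j]$, replace $u_{\xi_j}^\xi/\|u_{\xi_j}^\xi\|$ by $-\bar{u}_j'/(\bar{q}_j\epsilon^{1/2})$, insert the decomposition of $\mathscr{F}(u^\xi)$ from Lemma \ref{est-Fuxi}, and reduce the bulk integral to endpoint contributions using that $\bar{u}_j'$ solves the scaled linearized equation. The paper records this last reduction by observing that the integrand is the exact $s$-derivative of $g=\bar{u}_j''\cdot U-\bar{u}_j'\cdot U'$, which is the same algebraic fact as your Green's-identity boundary term $\epsilon^2[U\cdot\varphi_x-U_x\cdot\varphi]$.
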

\begin{proof}In this proof, if there is no risk of confusion, we simply write $\bar{u}_h$ instead of $\bar{u}_h(\frac{x-\xi_h}{\eps})$.
 From \eqref{beta.} and the expression of $U$ we obtain
 \begin{equation}\begin{split}
 &\bar{c}_j(\xi)=\frac{1}{\| \epsilon u_{\xi_j}^\xi\|}(\int_0^1\mathscr{F}(u^\xi)\cdot\varkappa_j dx
 -\int_0^{\hat{\xi}_{j-1}}\mathscr{F}(u^\xi)\cdot\bar{u}_j^\prime dx-\int_{\hat{\xi}_j}^1\mathscr{F}(u^\xi)\cdot \bar{u}_j^\prime dx)\\
 &+\frac{1}{\|\epsilon u_{\xi_j}^\xi\|}
 \int_{\hat{\xi}_{j-1}}^{\hat{\xi}_j}\Big(W_{uu}(\bar{u}_j)(\bar{u}_{j-1}-a_j+\bar{u}_{j+1}-a_{j+1})\\
&-\bar{u}_{j-1}^{\prime\prime}-\bar{u}_{j+1}^{\prime\prime}
+\mathrm{O}(\vert\tau_j^1\vert+\vert U\vert^2)\Big)\cdot\bar{u}_j^\prime dx
\end{split}
\label{barcj}
\end{equation}
From Lemma \ref{est-uxi} and Lemma \ref{est-Fuxi} we have
\[\begin{split}
&\vert\mathscr{F}(u^\xi)\cdot\varkappa_j\vert\leq Ce^{-\frac{\alpha\rho}{\eps}}\max_{h,\pm}\mathscr{E}_h^\pm,\;\;x\in[0,1],\\
&\vert\mathscr{F}(u^\xi)\cdot\bar{u}_j^\prime\vert\leq C\vert\mathscr{F}(u^\xi)\vert e^{-\frac{\mu_j^-}{2\eps}(\xi_j-\xi_{j-1})}
\leq Ce^{-\frac{\alpha\rho}{\eps}}\max_{h,\pm}\mathscr{E}_h^\pm,\;\;x\leq\hat{\xi}_{j-1},\\
&\vert\mathscr{F}(u^\xi)\cdot\bar{u}_j^\prime\vert\leq C\vert\mathscr{F}(u^\xi)\vert e^{-\frac{\mu_{j+1}^+}{2\eps}(\xi_{j+1}-\xi_j)}
\leq Ce^{-\frac{\alpha\rho}{\eps}}\max_{h,\pm}\mathscr{E}_h^\pm,\;\;x\geq\hat{\xi}_j,\\
&\vert\tau_j^1\vert\vert\bar{u}_j^\prime\vert\leq Ce^{-\frac{\alpha\rho}{\eps}}\max_{h,\pm}\mathscr{E}_h^\pm,\;\;x\in[\hat{\xi}_{j-1},\hat{\xi}_j].
\end{split}\]
and by mean of \eqref{uj-aj} we find
\[\begin{split}
&\vert U\vert^2\vert\bar{u}_j^\prime\vert
\leq 2(\vert\bar{u}_{j-1}-a_j\vert^2+\vert\bar{u}_{j+1}-a_{j+1}\vert^2)\vert\bar{u}_j^\prime\vert\\
&\leq C(e^{-\frac{2\mu_j^+}{\eps}(x-\xi_{j-1})}+e^{\frac{2\mu_{j+1}^-}{\eps}(x-\xi_{j+1})})
e^{\frac{\mu_j^-}{\eps}(x-\xi_j)}\\
&\leq Ce^{-\frac{\alpha\rho}{\eps}}\max_{h,\pm}\mathscr{E}_h^\pm,\;\;x\in[\hat{\xi}_{j-1},\xi_j],
\end{split}\]
and in a similar way
\[\vert U\vert^2\vert\bar{u}_j^\prime\vert
\leq Ce^{-\frac{\alpha\rho}{\eps}}\max_{h,\pm}\mathscr{E}_h^\pm,\;\;x\in[\xi_j,\hat{\xi}_j].
\]
These estimates imply that we can rewrite \eqref{barcj} in the form
\begin{equation}
 \begin{split}
 &\bar{c}_j(\xi)=
 \frac{1}{\|\epsilon u_{\xi_j}^\xi\|}\Big(
 \int_{\hat{\xi}_{j-1}}^{\hat{\xi}_j}\Big(W_{uu}(\bar{u}_j)(\bar{u}_{j-1}-a_j+\bar{u}_{j+1}-a_{j+1})
-\bar{u}_{j-1}^{\prime\prime}-\bar{u}_{j+1}^{\prime\prime}
\Big)\cdot\bar{u}_j^\prime dx\\
&+\mathrm{O}(e^{-\frac{\alpha\rho}{\eps}}\max_{h,\pm}\mathscr{E}_h^\pm\Big)=\frac{1}{\|\epsilon u_{\xi_j}^\xi\|}(I+\mathrm{O}(e^{-\frac{\alpha\rho}{\eps}}\max_{h,\pm}\mathscr{E}_h^\pm)),
\end{split}
\label{barcj1}
\end{equation}
with obvious definition of $I$.
Since $W_{uu}(u)$ is a symmetric matrix and
 $\bar{u}_j^{\prime\prime\prime}= W_{uu}(\bar{u}_j)\bar{u}_j^\prime$, we can rewrite $I$ as
\[\begin{split}
&I= \int_{\hat{\xi}_{j-1}}^{\hat{\xi}_j}\Big(W_{uu}(\bar{u}_j)\bar{u}_j^\prime\cdot(\bar{u}_{j-1}-a_j+\bar{u}_{j+1}-a_{j+1})
-(\bar{u}_{j-1}^{\prime\prime}+\bar{u}_{j+1}^{\prime\prime})
\cdot\bar{u}_j^\prime\Big) dx\\
&=\int_{\hat{\xi}_{j-1}}^{\hat{\xi}_j}\Big(\bar{u}_j^{\prime\prime\prime}\cdot(\bar{u}_{j-1}-a_j+\bar{u}_{j+1}-a_{j+1})
-(\bar{u}_{j-1}^{\prime\prime}+\bar{u}_{j+1}^{\prime\prime})
\cdot\bar{u}_j^\prime\Big) dx.
\end{split}\]
Set
\[g=\bar{u}_j^{\prime\prime}\cdot
(\bar{u}_{j-1}-a_j+\bar{u}_{j+1}-a_{j+1})
-\bar{u}_j^{\prime}\cdot
(\bar{u}_{j-1}^{\prime}+\bar{u}_{j+1}^{\prime}).\]
We regard $g$ as a function of $s=\frac{x}{\epsilon}$ and we have $\epsilon\frac{d}{dx}g=g^\prime$.
Note that
\[g^\prime=\bar{u}_j^{\prime\prime\prime}\cdot(\bar{u}_{j-1}-a_j+\bar{u}_{j+1}-a_{j+1})
-(\bar{u}_{j-1}^{\prime\prime}+\bar{u}_{j+1}^{\prime\prime})
\cdot\bar{u}_j^\prime,\]
that is $g^\prime$ coincides with the integrand of $I$. It follows
\begin{equation}
I=\int_{\hat{\xi}_{j-1}}^{\hat{\xi}_j}\epsilon\frac{d}{dx}gdx=\epsilon g\vert_{\hat{\xi}_{j-1}}^{\hat{\xi}_j}.
\label{I-g}
\end{equation}

From \eqref{generic} we obtain %with $k_j^\pm$ defined in \eqref{k-pm},  $k_j=\mu_j^2\bar{K}_j^-\bar{K}_{j-1}^+$ and $\varsigma_j=\sigma_j^-\sigma_{j-1}^+$,  $j=1,\ldots,N$

\[\begin{split}
&\bar{u}_j^{\prime\prime}\vert_{x={\hat{\xi}_j}}=z_{j+1}^+{\mu_{j+1}^+}^2\bar{K}_j^+
e^{-\frac{\mu_{j+1}^+}{2\epsilon}(\xi_{j+1}-\xi_j)}
+\mathrm{O}(e^{-\frac{\hat{\mu}_j^+-\mu_{j+1}^+}{2\epsilon}(\xi_{j+1}-\xi_j)}
e^{-\frac{\mu_{j+1}^+}{2\epsilon}(\xi_{j+1}-\xi_j)})\\
&=z_{j+1}^+{\mu_{j+1}^+}^2\bar{K}_j^+{\mathscr{E}_{j+1}^+}^\frac{1}{2}
+\mathrm{O}(e^{-\frac{\alpha\rho}{\eps}}{\mathscr{E}_{j+1}^+}^\frac{1}{2})\\
&(\bar{u}_{j+1}-a_{j+1})\vert_{x={\hat{\xi}_j}}=z_{j+1}^-\bar{K}_{j+1}^-
e^{-\frac{\mu_{j+1}^-}{2\epsilon}(\xi_{j+1}-\xi_j)}
+\mathrm{O}(e^{-\frac{\hat{\mu}_{j+1}^--\mu_{j+1}^-}{2\epsilon}(\xi_{j+1}-\xi_j)}e^{-\frac{\mu_{j+1}^-}{2\epsilon}(\xi_{j+1}-\xi_j)})
\\
&=z_{j+1}^-\bar{K}_{j+1}^-
{\mathscr{E}_{j+1}^-}^\frac{1}{2}
+\mathrm{O}(e^{-\frac{\alpha\rho}{\eps}}{\mathscr{E}_{j+1}^-}^\frac{1}{2})
\\\\
&-\bar{u}_j^{\prime}\vert_{x={\hat{\xi}_j}}=z_{j+1}^+\mu_{j+1}^+\bar{K}_j^+
{\mathscr{E}_{j+1}^+}^\frac{1}{2}
+\mathrm{O}(e^{-\frac{\alpha\rho}{\eps}}{\mathscr{E}_{j+1}^+}^\frac{1}{2})\\
&\bar{u}_{j+1}^{\prime}\vert_{x={\hat{\xi}_j}}=z_{j+1}^-\mu_{j+1}^-\bar{K}_{j+1}^-
{\mathscr{E}_{j+1}^-}^\frac{1}{2}
+\mathrm{O}(e^{-\frac{\alpha\rho}{\eps}}{\mathscr{E}_{j+1}^-}^\frac{1}{2}).
\end{split}\]
Since from \eqref{Eh} ${\mathscr{E}_h^-}^\frac{1}{2}{\mathscr{E}_h^+}^\frac{1}{2}=\mathscr{E}_h$, it follows
\[\begin{split}
&\bar{u}_j^{\prime\prime}\cdot(\bar{u}_{j+1}-a_{j+1})\vert_{x={\hat{\xi}_j}}=
\varsigma_{j+1}{\mu_{j+1}^+}^2\bar{K}_{j+1}^-\bar{K}_j^+\mathscr{E}_{j+1}
+\mathrm{O}(e^{-\frac{\alpha\rho}{\eps}}\mathscr{E}_{j+1})\\\\
&-\bar{u}_j^{\prime}\cdot\bar{u}_{j+1}^{\prime}\vert_{x={\hat{\xi}_j}}=
\varsigma_{j+1}\mu_{j+1}^+\mu_{j+1}^-\bar{K}_{j+1}^-\bar{K}_j^+\mathscr{E}_{j+1}
+\mathrm{O}(e^{-\frac{\alpha\rho}{\eps}}\mathscr{E}_{j+1}),
\end{split}\]
and recalling the definition \eqref{varsigma} of $k_j^\pm$ we finally obtain
\[\begin{split}
&(\bar{u}_j^{\prime\prime}\cdot
(\bar{u}_{j+1}-a_{j+1})
-\bar{u}_j^{\prime}\cdot
\bar{u}_{j+1}^{\prime})\vert_{x={\hat{\xi}_{j}}}=\varsigma_{j+1}2k_{j+1}^+\mathscr{E}_{j+1}
+\mathrm{O}(e^{-\frac{\alpha\rho}{\eps}}\mathscr{E}_{j+1}).
\end{split}\]
A similar computation yields

\[\begin{split}
&(\bar{u}_j^{\prime\prime}\cdot
(\bar{u}_{j-1}-a_j)
-\bar{u}_j^{\prime}\cdot
\bar{u}_{j-1}^{\prime})
\vert_{x=\hat{\xi}_{j-1}}=\varsigma_j2k_j^-\mathscr{E}_j
+\mathrm{O}(e^{-\frac{\alpha\rho}{\eps}}\mathscr{E}_{j}),
\end{split}\]
and we also have
\[\begin{split}
&(\bar{u}_j^{\prime\prime}\cdot
(\bar{u}_{j+1}-a_{j+1})
-\bar{u}_j^{\prime}\cdot
\bar{u}_{j+1}^{\prime})\vert_{x={\hat{\xi}_{j-1}}}
=\mathrm{O}(e^{-\frac{\mu_j^-+\mu_{j+1}^-}{2\epsilon}(\xi_j-\xi_{j-1})}
e^{-\frac{\mu_{j+1}^-}{\epsilon}(\xi_{j+1}-\xi_j)})\\
&=\mathrm{O}(e^{-\frac{\alpha\rho}{\eps}}\mathscr{E}_{j+1}^-)\\
&(\bar{u}_j^{\prime\prime}\cdot
(\bar{u}_{j-1}-a_j)
-\bar{u}_j^{\prime}\cdot
\bar{u}_{j-1}^{\prime})
\vert_{x=\hat{\xi}_j}
=\mathrm{O}(e^{-\frac{\mu_j^++\mu_{j+1}^+}{2\epsilon}(\xi_{j+1}-\xi_{j})}
e^{-\frac{\mu_{j}^+}{\epsilon}(\xi_{j}-\xi_{j-1})})\\
&=\mathrm{O}(e^{-\frac{\alpha\rho}{\eps}}\mathscr{E}_j^+)
\end{split}\]

From these estimates \eqref{I-g} and \eqref{barcj} we get
\begin{equation}
\begin{split}
&\bar{c}_j(\xi)=\frac{2}{\| u_{\xi_j}^\xi\|}\Big(\varsigma_{j+1}k_{j+1}\mathscr{E}_{j+1}
-\varsigma_jk_j\mathscr{E}_j
\,+\mathrm{O}(e^{-\frac{\alpha\rho}{\eps}}\max_{h,\pm}\mathscr{E}_h^\pm)\Big),\;\;j=1,\ldots,N.
\end{split}
\label{scripC}
\end{equation}
This and \eqref{uxixi} complete the  proof.
\end{proof}

%%%%%%%%%%%%%%%%%%%%%%%%%%%%%%%%%%%%%%%%%%%%%%%%%%%%%%%%%%%%%%%%%%%%%%%%%%%%%%%%%%%%%%%%%%%%%%%%%%%%%%%

%%%%%%%%%%%%%%%%%%%%%%%%%%%%%%%%%%%%%%%%%%%%%%%%%%%%%%%%%%%%%%%%%%%%%%%%%%%%%%%%%%%%%%%%%%%%%%%%%%%%%%%%%%%%%%%%%%%%%

To obtain a good approximation to $c(\xi)$ valid for $\eps>0$ small, the estimate for $\eta_j^\xi$ in (\ref{approxphi}) is not sufficient. We need the following refinement,

\begin{lemma}\label{gammasquare0}
Let $X^\xi$ the eigen-space introduced in Proposition \ref{SpectrumL}. There exist vectors $\eta_1^\xi,\ldots,\eta_N^\xi\in W^{1,2}((0,1);\R^m)$ such that
\begin{enumerate}
\item
$\|\eta_j^\xi\|\leq\frac{C}{\eps}\max_{h,\pm}\mathscr{E}_h^\pm.$
\item
$\varphi_j^\xi=\frac{u_{\xi_j}^\xi}{\|u_{\xi_j}^\xi\|}+\eta_j^\xi,\;\;j=1,\ldots,N
$ is an orthonormal basis for $X^\xi$.
\end{enumerate}
\end{lemma}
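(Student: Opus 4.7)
The plan is to start from the natural candidate basis $e_j:=u_{\xi_j}^\xi/\|u_{\xi_j}^\xi\|$, show that each $e_j$ lies exponentially close to the small-eigenvalue subspace $X^\xi$, and then orthonormalize. The crucial input is that $u_{\xi_j}^\xi$ is an \emph{approximate} kernel element of $L^\xi$ with a quantitative error controlled by Lemma \ref{est-Fuxi}.

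First I would differentiate the defining identity $\mathscr{F}(u^\xi)=\epsilon^2 u_{xx}^\xi-W_u(u^\xi)$ with respect to $\xi_j$ to obtain
\[
L^\xi u_{\xi_j}^\xi=-\mathscr{F}_{\xi_j}(u^\xi).
\]
Integrating the pointwise estimate \eqref{scrF}$_2$ over $[0,1]$ yields $\|L^\xi u_{\xi_j}^\xi\|\leq (C/\epsilon)\max_{h,\pm}\mathscr{E}_h^\pm$. Let $\pi$ denote the orthogonal projection onto $X^\xi$. Since $L^\xi$ preserves the splitting $X^\xi\oplus X^{\xi,\perp}$, the component $(1-\pi)u_{\xi_j}^\xi\in X^{\xi,\perp}$ satisfies $L^\xi(1-\pi)u_{\xi_j}^\xi=(1-\pi)L^\xi u_{\xi_j}^\xi$, and the spectral gap \eqref{Lbounds}$_1$ (with Cauchy--Schwarz giving $\|L^\xi v\|\geq\lambda^*\|v\|$ on $X^{\xi,\perp}$) delivers
\[
\|(1-\pi)u_{\xi_j}^\xi\|\leq\tfrac{1}{\lambda^*}\|L^\xi u_{\xi_j}^\xi\|\leq\tfrac{C}{\epsilon}\max_{h,\pm}\mathscr{E}_h^\pm.
\]
Dividing by $\|u_{\xi_j}^\xi\|\sim\epsilon^{-1/2}\bar q_j$ from \eqref{uxixi} gives $\|(1-\pi)e_j\|\leq C\epsilon^{-1/2}\max_{h,\pm}\mathscr{E}_h^\pm$, so $\pi e_j=e_j+O(\epsilon^{-1/2}\max_{h,\pm}\mathscr{E}_h^\pm)$ in $L^2$ and $\pi e_j\in X^\xi$.

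Next I would orthonormalize $\{\pi e_j\}_{j=1}^N$ inside $X^\xi$. The Gram matrix $G_{ij}=\langle\pi e_i,\pi e_j\rangle$ differs from the identity by $E$, where for $i\ne j$
\[
\langle\pi e_i,\pi e_j\rangle=\langle e_i,e_j\rangle+\langle e_i,(\pi-1)e_j\rangle=O(\epsilon^{-1}\max_{h,\pm}\mathscr{E}_h^\pm)
\]
by \eqref{vij} together with the bound on $\|(1-\pi)e_j\|$ just obtained, and the diagonal satisfies $\langle\pi e_j,\pi e_j\rangle=1-\|(1-\pi)e_j\|^2=1+O(\epsilon^{-1}\max^2)$. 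Thus $\|E\|=O(\epsilon^{-1}\max_{h,\pm}\mathscr{E}_h^\pm)$, and setting $\varphi_j^\xi:=\sum_i(G^{-1/2})_{ji}\pi e_i$ produces an orthonormal basis of $X^\xi$ with
\[
\|\varphi_j^\xi-\pi e_j\|\leq C\|E\|=\tfrac{C}{\epsilon}\max_{h,\pm}\mathscr{E}_h^\pm.
\]
Combining with $\|\pi e_j-e_j\|=O(\epsilon^{-1/2}\max)$ gives $\eta_j^\xi:=\varphi_j^\xi-e_j$ with the claimed bound $\|\eta_j^\xi\|\leq(C/\epsilon)\max_{h,\pm}\mathscr{E}_h^\pm$.

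The only delicate point is bookkeeping of the two sources of error: the near-kernel error (through $L^\xi u_{\xi_j}^\xi=-\mathscr{F}_{\xi_j}$ and the spectral gap), and the quasi-orthogonality error from \eqref{vij}; both turn out to be of order $\epsilon^{-1}\max_{h,\pm}\mathscr{E}_h^\pm$ after normalization, and neither dominates catastrophically. Everything else is a routine Gram--Schmidt manipulation once the $L^2$ bound on $(1-\pi)u_{\xi_j}^\xi$ is in hand.
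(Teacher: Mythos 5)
Your argument is correct and follows essentially the same strategy as the paper's: you exploit the exact identity $L^\xi u_{\xi_j}^\xi=-\mathscr{F}_{\xi_j}(u^\xi)$ together with the pointwise bound \eqref{scrF}$_2$ and the spectral gap of Proposition \ref{SpectrumL} to show that $u_{\xi_j}^\xi$ lies within $O(\epsilon^{-1}\max_{h,\pm}\mathscr{E}_h^\pm)$ of $X^\xi$ in $L^2$ (the paper packages this as Lemma \ref{eta} for the component $\gamma_j^\xi\in X^{\xi,\perp}$, using the $W_\epsilon^{1,2}$ coercivity $\mu^*$ in place of your $\lambda^*$ plus Cauchy--Schwarz, but the content is the same), and you then invoke \eqref{vij} to control the quasi-orthogonality, which is the dominant $O(\epsilon^{-1}\max_{h,\pm}\mathscr{E}_h^\pm)$ error. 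The only genuine, and purely stylistic, difference is in the final orthonormalization: you use the symmetric (L\"owdin) construction via $G^{-1/2}$, which gives the error bound in one shot, whereas the paper carries out Gram--Schmidt with an inductive estimate on the accumulated corrections; both are routine and yield the same order.
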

\begin{proof}
See Section \ref{Pf}
\end{proof}

%%%%%%%%%%%%%%%%%%%%%%%%%%%%%%%%%%%%%%%%%%%%%%%%%%%%%%%%%%%%%%%%%%%%%%%%%%%%%%%%%%%%%%%%%%%%%%%%%%%%%%%%%%%

%\mathscr{F}%

%%%%%%%%%%%%%%%%%%%%%%%%%%%%%%%%%%%%%%%%%%%%%%%%%%%%%%%%%%%%%%%%%%%%%%%%%%%%%%%%%%%%%%%%%%%%%%%%%%%%%%%%%%%%%

%%%%%%%%%%%%%%%%%%%%%%%%%%%%%%%%%%%%%%%%%%%%%%%%%%%%%%%%%%%%%%%%%%%%%%%%%%%%%%%%%%%%%
From \eqref{scrF} and Lemma \ref{gammasquare0} it follows
\[\begin{split}
&\|\mathscr{F}(u^\xi)\|^2\leq Ce^{-\frac{\alpha\rho}{\epsilon}}\max_{h,\pm}\mathscr{E}_h^\pm,\\
&\vert\langle\mathscr{F}(u^\xi),\eta_j^\xi\rangle\vert
\leq Ce^{-\frac{\alpha\rho}{\epsilon}}\max_{h,\pm}\mathscr{E}_h^\pm).
\end{split}\]
 This \eqref{cvest} and Lemma \ref{barc-lem} yield $c_j(\xi)-\bar{c}_j(\xi)=\mathrm{O}(e^{-\frac{\alpha\rho}{\epsilon}}\max_{h,\pm}\mathscr{E}_h^\pm)$ and we can conclude that

\begin{equation}
\begin{split}
& c_j(\xi)
=c_j^0(\xi)
+r_j(\xi)\\
& r_j(\xi)=\mathrm{O}(e^{-\frac{\alpha\rho}{\eps}}
\max_{h,\pm}\mathscr{E}_h^\pm),\;\;j=1,\ldots,N.
\end{split}
\label{cest}
\end{equation}

where $c_j^0(\xi)$ is defined in \eqref{c0-gen}. This conclude the proof of Theorem \ref{TH-c}.
\end{proof}

Note that, since, as we have observed,
$c(\xi)$ depends on $\xi$ through the differences $\xi_j-\xi_{j-1}$, $j=1,\ldots,N$ and therefore is a function of only $N-1$ variables, equation \eqref{cest} implies that also the error $r_j(\xi)$ in the estimate \eqref{cest} depends only on $\xi_j-\xi_{j-1}$, $j=1,\ldots,N$.

We can now state
\begin{theorem}
\label{per-exists}
Assume $\bf{H}_1-\bf{H}_4$, then there exists $\epsilon_0>0$ such that, provided $\epsilon\in(0,\epsilon_0]$, the condition
\begin{equation}
\varsigma_j=\varsigma_{j+1},\;\;j=1,\ldots,N
\label{sigmaproduct}
\end{equation}
is necessary and sufficient in order that there is $\xi\in\Xi_\rho$ with the property that $\hat{u}^\xi=u^\xi+v^\xi$, $v^\xi$ as in Proposition \ref{quasiinv1}, is a periodic solution of period $1$ of \eqref{rescaled}. The vector $\xi\in\Xi_\rho$ that determines the periodic orbit satisfies
\begin{equation}
\xi_j-\xi_{j-1}=\frac{1/\mu_j}{\sum_{h=1}^N1/\mu_h}+\epsilon\delta_j(\xi),
\;\;j=1,\ldots,N,\;\;\sum_{j=1}^N\delta_j=0.
\label{xi-det}
\end{equation}
where $\delta_j=\bar{\delta}_j+\mathrm{O}(e^{-\frac{\alpha\rho}{\eps}})$ and $\bar{\delta}_j$ are constants that depend only on $\mu_1,\ldots,\mu_N$  and  $k_1,\ldots,k_N$.
\end{theorem}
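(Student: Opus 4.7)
By \eqref{cvest}$_3$ and the linear independence of the $\varphi_j^\xi$, the map $\hat{u}^\xi=u^\xi+v^\xi$ is a $1$-periodic solution of \eqref{rescaled} exactly when $\xi\in\Xi_\rho$ satisfies the bifurcation equation $c(\xi)=0$. Under $\mathbf{H}_4$, Theorem~\ref{TH-c} furnishes the decomposition
\[
c_j(\xi)=c_j^0(\xi)+r_j(\xi),\quad c_j^0(\xi)=\frac{2\epsilon^{1/2}}{\bar{q}_j}\bigl(\varsigma_{j+1}k_{j+1}\mathscr{E}_{j+1}-\varsigma_jk_j\mathscr{E}_j\bigr),\quad r_j(\xi)=\mathrm{O}\bigl(e^{-\alpha\rho/\epsilon}\max_h\mathscr{E}_h\bigr),
\]
with $k_j>0$. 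Both directions of the proof rest on this formula.

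For necessity, suppose $c(\xi)=0$ and pick $j^\ast$ with $\mathscr{E}_{j^\ast}=\max_h\mathscr{E}_h$. The $(j^\ast-1)$-th equation reads
\[
\varsigma_{j^\ast}k_{j^\ast}\mathscr{E}_{j^\ast}-\varsigma_{j^\ast-1}k_{j^\ast-1}\mathscr{E}_{j^\ast-1}=\mathrm{O}\bigl(\epsilon^{-1/2}e^{-\alpha\rho/\epsilon}\mathscr{E}_{j^\ast}\bigr).
\]
If $\varsigma_{j^\ast-1}\neq\varsigma_{j^\ast}$ the two terms on the left have equal sign and their sum is at least $k_{j^\ast}\mathscr{E}_{j^\ast}$, contradicting the exponential smallness of the right-hand side for small $\epsilon$. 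Hence $\varsigma_{j^\ast-1}=\varsigma_{j^\ast}$, and the near cancellation of the leading term then forces $\mathscr{E}_{j^\ast-1}$ to remain within a bounded multiplicative factor of $\mathscr{E}_{j^\ast}$. Iterating this reasoning around the cyclic index propagates the sign equality to all~$j$ and establishes \eqref{sigmaproduct}.

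For sufficiency, assume $\varsigma_j\equiv\varsigma$. The leading-order equation $c^0=0$ reduces to $k_j\mathscr{E}_j=K$ for a single constant $K$; taking logarithms gives $\mu_j(\xi_j-\xi_{j-1})=\epsilon\log(k_j/K)$, and the identity $\sum_j(\xi_j-\xi_{j-1})=1$ determines $\log K=-\frac{1}{\epsilon S}+\frac{1}{S}\sum_i\frac{\log k_i}{\mu_i}$ with $S=\sum_h 1/\mu_h$. This yields the explicit leading profile
\[
\bar{\xi}_j-\bar{\xi}_{j-1}=\frac{1/\mu_j}{S}+\epsilon\bar{\delta}_j,\qquad \bar{\delta}_j=\frac{1}{\mu_j}\Bigl(\log k_j-\frac{1}{S}\sum_i\frac{\log k_i}{\mu_i}\Bigr),\qquad \sum_j\bar{\delta}_j=0,
\]
which supplies the constants predicted by \eqref{xi-det}. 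To upgrade $\bar{\xi}$ to an exact zero of $c$, set $\xi=\bar{\xi}+\epsilon\eta$ with $\sum_j\eta_j=0$ (legitimate by the translation invariance of Remark~\ref{c-xi-xi}), so that $k_j\mathscr{E}_j(\xi)=Ke^{-\mu_j(\eta_j-\eta_{j-1})}$. Dividing by the common prefactor, $c(\xi)=0$ becomes the cyclic system
\[
e^{-\mu_{j+1}(\eta_{j+1}-\eta_j)}-e^{-\mu_j(\eta_j-\eta_{j-1})}=R_j(\eta),\qquad R_j(\eta)=\mathrm{O}(e^{-\alpha\rho/\epsilon}),
\]
uniformly for $|\eta|\leq 1$. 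Linearization at $\eta=0$ gives the symmetric tridiagonal cyclic matrix $A$ with rows $(-\mu_j,\mu_j+\mu_{j+1},-\mu_{j+1})$: this is precisely the Laplacian of the weighted $N$-cycle with edge weights $\mu_j$, hence its kernel is spanned by $(1,\ldots,1)^\top$ and $A$ is invertible on the hyperplane $\{\sum_j\eta_j=0\}$. Recasting the problem as the fixed-point equation $\eta=A^{-1}(R(\eta)-\mathcal{N}(\eta))$, with $\mathcal{N}(\eta)$ the quadratic remainder of the two exponentials, a standard Brouwer (or Banach) argument on the ball $\{|\eta|\leq Ce^{-\alpha\rho/\epsilon}\}$ produces a unique small $\eta$, and restoring $\xi=\bar{\xi}+\epsilon\eta$ yields \eqref{xi-det} with $\delta_j-\bar{\delta}_j=\eta_j-\eta_{j-1}=\mathrm{O}(e^{-\alpha\rho/\epsilon})$.

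The main obstacle is making the contraction/Brouwer step genuinely uniform: one must verify that the error $r_j$ supplied by Theorem~\ref{TH-c} retains the bound $\mathrm{O}(e^{-\alpha\rho/\epsilon}\max_h\mathscr{E}_h)$ as $\xi$ varies over the exponentially small ball around $\bar{\xi}$ (so that $R_j(\eta)$ is truly $\mathrm{O}(e^{-\alpha\rho/\epsilon})$ for every candidate $\eta$), and that this ball sits inside $\Xi_\rho$; the latter is automatic once $\epsilon$ is small enough that the leading gap $\bar{\xi}_j-\bar{\xi}_{j-1}\sim 1/(\mu_j S)$ dominates $\rho/\mu_j$ uniformly in $j$.
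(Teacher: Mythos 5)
Your necessity argument and the leading-order computation of $\bar{\delta}_j$ are correct, and the overall plan (linearize around the explicit approximate zero $\bar\xi$ of $c^0$ and close by Brouwer) is a legitimate, and in fact cleaner, alternative to the paper's substitution/elimination. However, there is one genuine gap in the sufficiency step that the paper resolves with Proposition~\ref{N-1=N} and your proposal does not address.

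The fixed-point equation $\eta = A^{-1}\bigl(R(\eta)-\mathcal{N}(\eta)\bigr)$ is not well-posed as written. You correctly observe that $A$ is the Laplacian of the weighted $N$-cycle, so $\ker A=\operatorname{span}(1,\dots,1)^\top$ and, since $A$ is symmetric, $\operatorname{ran} A=\{x:\sum_j x_j=0\}$. The nonlinear remainder $\mathcal{N}(\eta)$ lies in this hyperplane (the full left-hand side telescopes to zero, and so does its linear part, hence so does its quadratic remainder). But $R(\eta)$, which is built from the error terms $r_j(\xi)$ of Theorem~\ref{TH-c}, has no a priori reason to satisfy $\sum_j R_j(\eta)=0$; Theorem~\ref{TH-c} only gives the componentwise bound $r_j=\mathrm{O}(e^{-\alpha\rho/\epsilon}\max_h\mathscr{E}_h)$, not a cancellation. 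Consequently $R(\eta)-\mathcal{N}(\eta)$ need not be in the range of $A$, and $A^{-1}$ applied to it is undefined. If you instead project onto the hyperplane before inverting, the Brouwer argument produces an $\eta^*$ with $c_j(\xi^*)=\lambda$ for all $j$ and some scalar $\lambda$; you still need to show $\lambda=0$. In short, the system $c_j(\xi)=0$, $j=1,\dots,N$, is $N$ equations in effectively $N-1$ unknowns, and your argument never exploits the hidden dependency among the $c_j$.

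This is precisely the role of Proposition~\ref{N-1=N} in the paper: the identity $\sum_j(\bar q_j+p_j^\xi)c_j(\xi)=0$, which is a structural consequence of the conservation law along $\hat u^\xi$, shows that one of the $N$ bifurcation equations is automatically implied by the other $N-1$. The paper therefore solves only the first $N-1$ equations (a square system in $\delta_2,\dots,\delta_N$ after using translation invariance to normalize), and then invokes Proposition~\ref{N-1=N} to conclude the $N$-th. To repair your proof you should either (i) drop one of the $N$ equations before setting up the fixed point and then invoke Proposition~\ref{N-1=N} to recover it, or (ii) run your projected Brouwer argument and then apply Proposition~\ref{N-1=N} to the resulting $\xi^*$ (where all $c_j(\xi^*)$ equal the same $\lambda$) to force $\lambda\sum_j(\bar q_j+p_j^{\xi^*})=0$ and hence $\lambda=0$. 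Either route works, but one of them is needed; the linear-algebra fact that $A$ is invertible on the hyperplane does not by itself dispose of the over-determination. The uniformity concern you flag at the end is real but minor, since the estimates of Theorem~\ref{TH-c} and the constants in $\Xi_\rho$ are uniform in $\xi$, so it is enough to check that the exponentially small ball about $\bar\xi$ stays in $\Xi_\rho$ for $\epsilon$ small, which is immediate from \eqref{new-var1}.
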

\begin{proof}
From Theorem \ref{TH-c} we have $c(\xi)=0$ if and only if
\begin{equation}
\varsigma_{j+1}k_{j+1}\mathscr{E}_{j+1}=\varsigma_jk_j\mathscr{E}_j+\frac{\bar{q}_jr_j(\xi)}{2\epsilon^\frac{1}{2}},
\;\;j=1,\ldots,N.
\label{c=0}
\end{equation}
Since we are assuming $\bf{H}_4$, \eqref{c0-gen} implies that at least one of the $\mathscr{E}_j$ coincides with $\max_h\mathscr{E}_h$ and that, for small $\epsilon>0$, $r_j(\xi)$ is much smaller than $\max_h\mathscr{E}_h$. From this and $\varsigma_j=\pm 1$ it follows that \eqref{sigmaproduct} is a necessary condition for the existence of a solution $\xi\in\Xi$ of \eqref{c=0} and it results
\begin{equation}
C^-\max_h\mathscr{E}_h\leq\mathscr{E}_j\leq C^+\max_h\mathscr{E}_h,
\label{E=Emax}
\end{equation}
for some constants $C^\pm>0$ independent of $\epsilon$. This proves the necessity of condition \eqref{sigmaproduct}. The prove the sufficiency we note that, with \eqref{sigmaproduct}, equation \eqref{c=0} becomes
\begin{equation}
k_{j+1}\mathscr{E}_{j+1}=k_j\mathscr{E}_j+\frac{\bar{q}_j\varsigma_jr_j(\xi)}{2\epsilon^\frac{1}{2}},
\;\;j=1,\ldots,N.
\label{c=01}
\end{equation}
From Proposition \ref{N-1=N} to show that there exists $\xi\in\Xi$ that solves the equation $c(\xi)=0$ it suffices to show that we can solve the first $N-1$ of the equations \eqref{c=01}. These equations constitute a linear system in the unknowns $k_j\mathscr{E}_j$, $j=2,\ldots,N$ that can be solved yielding
\begin{equation}
k_j\mathscr{E}_j=k_1\mathscr{E}_1+\frac{\varsigma_1}{2\epsilon^\frac{1}{2}}
\sum_{h=1}^{j-1}\bar{q}_hr_h(\xi),\;\;j=2,\ldots,N.
\label{c=02}
\end{equation}
If we take the logarithm of these equations and multiply by $-\eps$ we get
\begin{equation}
\begin{split}
&\mu_j(\xi_j-\xi_{j-1})=\mu_1(\xi_j-\xi_0)+\epsilon\ln{\frac{k_j}{k_1}}+\epsilon\tilde{r}_j(\xi),\;\;j=2,\ldots,N.\\
&\tilde{r}_j(\xi)=-\ln{(1+
\frac{\varsigma_1}{2\epsilon^\frac{1}{2}k_1\mathscr{E}_1}
\sum_{h=1}^{j-1}\bar{q}_hr_h(\xi))}.
\end{split}
\label{c=03}
\end{equation}
We introduce the new variable $\delta\in\R^N$ by setting
\begin{equation}
\begin{split}
&\xi_j-\xi_{j-1}=\frac{\frac{1}{\mu_j}}{\sum_h\frac{1}{\mu_h}}+\epsilon\delta_j,\;\;j=1,\ldots,N,\\
&\sum_{j=1}^N\delta_j=0,
\end{split}
\label{new-var}
\end{equation}
where \eqref{new-var}$_2$ follows from $\sum_{j=1}^N(\xi_j-\xi_{j-1})=1$.
Since, as $r(\xi)$, $\tilde{r}(\xi)$ depends on $\xi$ through the differences $\xi_j-\xi_{j-1}$, $j=1,\ldots,N$ the function $\tilde{r}(\xi(\delta))$ is well defined via the change of variables \eqref{new-var}. We also observe that, since $\rho>0$ in the definition \eqref{basic} of $\Xi$ is a small fixed number, given $\delta_0>0$, for small $\epsilon>0$, we have
\begin{equation}
\frac{\frac{1}{\mu_j}}{\sum_h\frac{1}{\mu_h}}+\epsilon\delta_j>\rho,
\;\;\vert\delta\vert\leq\delta_0,\,\;j=1,\ldots,N,
\label{new-var1}
\end{equation}
and therefore \eqref{c0-gen}, \eqref{c=03}$_2$ and \eqref{E=Emax} imply
\begin{equation}
\tilde{r}(\xi(\delta))=\mathrm{O}(e^{-\frac{\alpha\rho}{\epsilon}}),\;\;\vert\delta\vert\leq\delta_0.
\label{rdelta-bound}
\end{equation}
 With the new variable, after dividing by $\eps\mu_j$,  \eqref{c=03} becomes
\begin{equation}
\begin{split}
&\delta_j=\frac{\mu_1}{\mu_j}\delta_1+\frac{1}{\mu_j}\ln{\frac{k_j}{k_1}}+\frac{1}{\mu_j}\tilde{r}_j(\xi(\delta)),
\;\;j=2,\ldots,N,\\
&\sum_{j=1}^N\delta_j=0.
\end{split}
\label{c=04}
\end{equation}
We make a further change of variable by setting $\delta=\bar{\delta}+\delta^*$ where $\bar{\delta}$ is the solution of \eqref{c=04} corresponding to $\epsilon=0$:
\[\begin{split}
&\bar{\delta}_1=-\frac{1}{\mu_1}\frac{\sum_{h=2}^N\frac{1}{\mu_h}\ln{\frac{k_h}{k_1}}}{\sum_{h=1}^N\frac{1}{\mu_h}},\\
&\bar{\delta}_j=\frac{\mu_1}{\mu_j}\bar{\delta}_1+\frac{1}{\mu_j}\ln{\frac{k_j}{k_1}}
,\;\;j=2,\ldots,N.
\end{split}\]
If we insert $\delta=\bar{\delta}+\delta^*$ in \eqref{c=04} we have
\begin{equation}
\begin{split}
&\delta_j^*=\frac{1}{\mu_j}\tilde{r}_j(\xi(\bar{\delta}+\delta^*)),
\;\;j=2,\ldots,N,\\
&\delta_1^*=-\sum_{j=2}^N\delta_j^*.
\end{split}
\label{c=05}
\end{equation}
\vskip.3cm
Set $B_1=\{\delta^*\in\R^N:\vert\delta^*\vert\leq 1,\;\sum_h\delta_h^*=0\}$ and let $F:B_1\rightarrow\R^N$ be defined by
setting $F_j(\delta^*)=\tilde{r}_j(\xi(\bar{\delta}+\delta^*))$, $j=2,\ldots,N$,  $F_1(\delta^*)=-\sum_{j=2}^N\tilde{r}_j(\xi(\bar{\delta}+\delta^*))$. $F$ is a continuous function that, by \eqref{rdelta-bound} satisfies $F(\delta^*)=\mathrm{O}(e^{-\frac{\alpha\rho}{\eps}})$. It follows that $F$ is a continuous map that maps $B_1$ into itself and Brouwer fixed point theorem implies that $F$ has a fixed in $B_1$ that is a solution of \eqref{c=05} that we still denote $\delta^*$. Moreover from \eqref{rdelta-bound} it follows
 that $\delta^*=\mathrm{O}(e^{-\frac{\alpha\rho}{\eps}})$. This concludes the proof.
\end{proof}

Theorem \ref{CNS} is essentially equivalent to Theorem \ref{per-exists}. Theorem \ref{per-exists} shows that under the assumptions of Theorem \ref{CNS} there exists a $\xi\in\Xi_\rho$ such that, for small $\epsilon>0$, $\hat{u}=u^\xi+v^\xi$, $v^\xi$ as in Proposition \ref{quasiinv1}, is a $1$-periodic solution of \eqref{rescaled}. If we set $\epsilon=\frac{1}{T}$ and $x=\frac{t}{T}$, then $u^T(t)=\hat{u}(\frac{t}{T})$ is a $T$- periodic solution of \eqref{newton}. From \eqref{uxi1}, \eqref{tau-est}, $\|v^\xi\|_{W_\epsilon^{1,2}}\leq C\|\mathscr{F}(u^\xi)\|$ and \eqref{scrF} it follows

\begin{equation}
\begin{split}
&\vert\hat{u}(x)-\bar{u}_j(\frac{x-\xi_j}{\epsilon})\vert\leq C(e^{-\frac{\mu_j}{\eps}(x-\xi_{j-1})}
+e^{\frac{\mu_{j+1}}{\eps}(x-\xi_{j+1})}
+e^{-\frac{\alpha\rho}{\eps}}\max_h\mathscr{E}_h^\frac{1}{2})\\
&\leq Ce^{-\frac{1}{2\epsilon\sum_h\frac{1}{\mu_h}}}),\;\;x\in[\hat{\xi}_{j-1},\hat{\xi}_j],\;j=1,\ldots,N,
\end{split}
\label{hatu-aj}
\end{equation}
where we have also used the characterization \eqref{xi-det} of $\xi$. In terms of $u^T$ \eqref{hatu-aj}  becomes
\begin{equation}
u^T(t)=\bar{u}_j(t-\xi_jT)+\mathrm{O}(e^{-\frac{T}{2\sum_h\frac{1}{\mu_h}}}),
\;\;t\in[\hat{\xi}_{j-1}T,\hat{\xi}_jT],\;j=1,\ldots,N,
\label{hatu-aj1}
\end{equation}
Statement (i) of Theorem \ref{CNS} follows from \eqref{xi-det} with $\epsilon=\frac{1}{T}$ and from \eqref{hatu-aj1} with the change of variable $t\rightarrow t+\xi_jT$. To prove (ii) we note that \eqref{generic} implies the existence of a constant $C>0$ such that for small $\delta>0$ we have
\[\begin{array}{l}
\vert\bar{u}_{j-1}-a_j\vert\leq\frac{\delta}{2},\;\;s\geq -C\ln{\delta},\\
\vert\bar{u}_j-a_j\vert\leq\frac{\delta}{2},\;\;s\leq C\ln{\delta}.
\end{array},\;\;j=1,\ldots,N.\]
This and \eqref{hatu-aj1} imply (ii) with $t_\delta=-C\ln{\delta}$.
The last statement of Theorem \ref{CNS} follows from $\|v^\xi\|_{W_\epsilon^{1,2}}=\mathrm{O}(e^{-\frac{\alpha\rho}{\eps}}e^{-\frac{1}{2\eps\sum_h\frac{1}{\mu_h}}})$.
The proof of Theorem \ref{CNS} is complete.

%%%% ATTENZIONE $\tau$

%%%%% ATTENZIONE FINE $\tau$

\subsection{Proofs of Lemma \ref{est-uxi} and Lemma \ref{gammasquare0}.}
\label{Pf}
\begin{proof} (Lemma \ref{est-uxi}) We assume $[\xi_{j-1},\xi_{j+1}]\subset[0,1]$. The estimates that we derive are valid in any case.

 We rearrange $\tau_j$ in the form
\begin{equation}
\begin{split}
&\tau_j=\sum_{h<j-1}\sum_{n\leq 0}\Big( \bar{u}_h( \frac{x-n-\xi_h}{\eps})-a_{h+1} \Big)
+\sum_{h\geq j-1}\sum_{n<0}\Big( \bar{u}_h( \frac{x-n-\xi_h}{\eps})-a_{h+1} \Big)\\
&+\sum_{h>j+1}\sum_{n\geq 0}\Big( \bar{u}_h( \frac{x-n-\xi_h}{\eps})-a_h \Big)
+\sum_{h\leq j+1}\sum_{n>0}\Big( \bar{u}_h( \frac{x-n-\xi_h}{\eps})-a_h \Big)\\
&=\sum_{h<j-1}\tilde{\tau}_h^++\sum_{h\geq j-1}\hat{\tau}_h^+
+\sum_{h>j+1}\tilde{\tau}_h^-+\sum_{h\leq j+1}\hat{\tau}_h^-,
 \end{split}
\label{tau1}
\end{equation}
with obvious definition of $\tilde{\tau}_h^\pm$ and $\hat{\tau}_h^\pm$. We only analyze the last two summations. The other two can be estimate in the same way.

1. Estimating $\sum_{h>j+1}\tilde{\tau}_h^-$.

Note that
\[\begin{split}
&h>j+1\;\;\text{and}\;\;x\leq\hat{\xi}_j\;\Rightarrow\;\;
 x-n-\xi_h\leq\frac{\xi_{j+1}+\xi_j}{2}-n-\xi_h\\
&=-(\frac{\xi_{j+1}-\xi_j}{2}+\xi_h-\xi_{j+1}+n)\leq-(\frac{\xi_{j+1}-\xi_j}{2}+\xi_h-\xi_{h-1}+n) .
\end{split}\]
This and \eqref{generic}, observing also that the contribution of $\bar{w}_h^-$ can be absorbed in the constant $C$, imply
\[\begin{split}
&\vert\bar{u}_h( \frac{x-n-\xi_h}{\eps})-a_h\vert\\
&\leq Ce^{-\frac{\mu_h^-}{\eps}\frac{\xi_{j+1}-\xi_j}{2}}e^{-\frac{\mu_h^-}{\eps}(\xi_h-\xi_{h-1})}e^{-\frac{\mu_h^-}{\eps}n},
\;\;n=0,1,\ldots,\;x\leq\hat{\xi}_j.\end{split}\]
 It follows

\[\vert\tilde{\tau}_h^-\vert\leq Ce^{-\frac{\mu_h^-}{\eps}\frac{\xi_{j+1}-\xi_j}{2}}\mathscr{E}_h^-,\;x\leq\hat{\xi}_j,\;h>j+1,\;
,\]
where we have also used \eqref{Eh}.

2. Estimating $\sum_{h\leq j+1}\hat{\tau}_h^-$.
We have
\[\begin{split}
&h\leq j+1\;\;\text{and}\;\;x\leq\hat{\xi}_j\;\Rightarrow\;\; x-n-\xi_h\leq\frac{\xi_{j+1}+\xi_j}{2}-\xi_h-n\\
&=-(\frac{\xi_{j+1}-\xi_j}{2}+\xi_h-\xi_{j+1}+n)
\leq-(\frac{\xi_{j+1}-\xi_j}{2}+\xi_h-\xi_{h-1}+(n-1)),
\end{split}\]
where we have used that $h\leq j+1$ implies $\xi_h-\xi_{j+1}+1\geq\xi_h-\xi_{h-1}$.
This and \eqref{generic} yield
\[\vert \bar{u}_h( \frac{x-n-\xi_h}{\eps})-a_h\vert\leq e^{-\frac{\mu_h^-}{\eps}\frac{\xi_{j+1}-\xi_j}{2}}e^{-\frac{\mu_h^-}{\eps}(\xi_h-\xi_{h-1})}e^{-\frac{\mu_h^-}{\eps}(n-1)},\;n>0.\]
It follows
\[\vert\hat{\tau}_h^-\vert\leq Ce^{-\frac{\mu_h^-}{\eps}\frac{\xi_{j+1}-\xi_j}{2}}\mathscr{E}_h^-,\;x\leq\hat{\xi}_j,\;h\leq j+1.\]

In the same way, since
\[\begin{split}
&h<j-1,\;\;n\leq 0,\;\text{ and }\;x\geq\hat{\xi}_{j-1},\;\Rightarrow\\
&x-n-\xi_h\geq\frac{1}{2}(\xi_j-\xi_{j-1})+\xi_{j-1}-\xi_h+\vert n\vert\geq\frac{1}{2}(\xi_j-\xi_{j-1})+\xi_{h+1}-\xi_h+\vert n\vert,\\
\\
&h\geq j-1,\;\;n<0,\;\text{ and }\;x\geq\hat{\xi}_{j-1},\;\Rightarrow\\
&x-n-\xi_h\geq\frac{1}{2}(\xi_j-\xi_{j-1})+\xi_{j-1}-\xi_h+\vert n\vert\geq\frac{1}{2}(\xi_j-\xi_{j-1})+1+\xi_{j-1}-\xi_h+(\vert n\vert-1)\\
&\geq\frac{1}{2}(\xi_j-\xi_{j-1})+\xi_{h+1}-\xi_h+(\vert n\vert-1),
\end{split}\]
we
 find
 \[\begin{split}
&\vert \tilde{\tau}_h^+\vert\leq Ce^{-\frac{\mu_{h+1}^+}{\eps}\frac{\xi_j-\xi_{j-1}}{2}}\mathscr{E}_{h+1}^+,\;x\leq\hat{\xi}_j,\;h<j-1\\
&\vert \hat{\tau}_h^+\vert\leq Ce^{-\frac{\mu_{h+1}^+}{\eps}\frac{\xi_j-\xi_{j-1}}{2}}\mathscr{E}_{h+1}^+,\;x\leq\hat{\xi}_j,\;h\geq j-1.
 \end{split}\]

and we conclude that
\begin{equation}
\begin{split}
&\vert\tau_j\vert\leq C\sum_h(e^{-\frac{\mu_{h}^-}{\eps}\frac{\xi_{j+1}-\xi_{j}}{2}}\mathscr{E}_{h}^-
+e^{-\frac{\mu_{h+1}^+}{\eps}\frac{\xi_j-\xi_{j-1}}{2}}\mathscr{E}_{h+1}^+)\\
&\leq Ce^{-\frac{\alpha\rho}{\epsilon}}\max_{h,\pm}\mathscr{E}_h^\pm.
\label{tau-est1}
\end{split}
\end{equation}
This concludes the estimate of $\tau_j$.

3.
 Note that \eqref{generic} and
\[\begin{split}
&n>0\;\;\text{and}\;\;x\in[0,1]\;\;\Rightarrow \\
&x-\xi_j-n\leq -\xi_j-(n-1)\leq-(\xi_j-\xi_{j-1})-(n-1),\\
&n<0\;\;\text{and}\;\;x\in[0,1]\;\;\Rightarrow\\
& x-\xi_j-n\geq -\xi_j+1+(\vert n\vert-1)\geq (\xi_{j+1}-\xi_j)+(\vert n\vert-1)
\end{split}\]
imply
\begin{equation}
\begin{split}
&\sum_{n>0}\vert\bar{u}_j^\prime(\frac{x-\xi_j-n}{\eps})\vert\leq Ce^{-\frac{\mu_j^-}{\eps}(\xi_j-\xi_{j-1})},\;\;x\in[0,1],\\
&\sum_{n<0}\vert\bar{u}_j^\prime(\frac{x-\xi_j-n}{\eps})\vert\leq Ce^{-\frac{\mu_{j+1}^+}{\eps}(\xi_{j+1}-\xi_j)}
,\;\;x\in[0,1].
\end{split}
\label{sigma-}
\end{equation}
It follows
\begin{equation}
\vert\varkappa_j\vert\leq C(\mathscr{E}_j^-+\mathscr{E}_{j+1}^+)
,\;\;x\in[0,1].
\label{sigma}
\end{equation}

From the expression \eqref{tau1} of $\tau_j$ we see that $\epsilon\tau_{jx}$, $\epsilon\tau_{j\xi_j}$, $\epsilon^2\tau_{jxx}$ and $\epsilon^3\tau_{jxx\xi_j}$ have a structure similar to $\tau_j$ and therefore,  \eqref{generic} implies that can be estimated by the same arguments developed for estimating $\tau_j$. The same remark applies to $\varkappa_j$.

4. From \eqref{uxixij} we have
\[
\|\epsilon u_{\xi_j}^\xi\|=\|\bar{u}_j^\prime(\frac{\cdot-\xi_j}{\eps})\|+\mathrm{O}(\|\varkappa_j\|),\]
and recalling that $\bar{q}_j^2=\int_\R\vert\bar{u}_j^\prime\vert^2ds$
\begin{equation}
\begin{split}
&\vert\|\bar{u}_j^\prime(\frac{\cdot-\xi_j}{\eps})\|^2-\epsilon\bar{q}_j^2\vert
\leq\epsilon\int_{-\infty}^{-\frac{\xi_j}{\epsilon}}\vert\bar{u}_j^\prime(s)\vert^2ds
+\epsilon\int_{\frac{1-\xi_j}{\epsilon}}^{+\infty}\vert\bar{u}_j^\prime(s)\vert^2ds\\
&\leq C\epsilon({\mathscr{E}_j^-}^2+{\mathscr{E}_{j+1}^+}^2),
\end{split}
\label{uxij2-es}
\end{equation}
where we have also used \eqref{generic} and the assumption $[\xi_{j-1},\xi_{j+1}]\subset[0,1]$.
This and the estimate for $\varkappa_j$ imply \eqref{uxixi}$_1$. Similarly we have
\[\begin{split}
&\|\epsilon^2 u_{\xi_j\xi_j}^\xi\|\leq\|\bar{u}_j^{\prime\prime}(\frac{\cdot-\xi_j}{\eps})\|+\|\epsilon^2\varkappa_{j\xi_j\xi_j}\|,\\
&\|\bar{u}_j^{\prime\prime}(\frac{\cdot-\xi_j}{\eps})\|^2\leq\epsilon\int_\R\vert\bar{u}_j^{\prime\prime}(s)\vert^2ds,
\end{split}\]
that together with $u_{\xi_i\xi_j}^\xi=0$ for $i\neq j$ yield \eqref{uxixi}$_2$. The proof of \eqref{uxixi}$_3$ follows from the previous estimates for $u_{\xi_j}^\xi, u_{\xi_j\xi_j}^\xi$ and $u_x^\xi=-\sum_ju_{\xi_j}^\xi$ and $u_{xx}^\xi=\sum_ju_{\xi_j\xi_j}^\xi$.

5. Set
\begin{equation}
\label{wij}
p_{ij}(x)=\frac{\vert u_{\xi_i}^\xi(x)\cdot u_{\xi_j}^\xi(x)\vert}{\|u_{\xi_i}^\xi\|\|u_{\xi_j}^\xi\|},\;\;i\neq j.
\end{equation}
To estimate $p_{ij}$ we can assume $\xi_i<\xi_j$.
Observe that $u_{\xi_i}^\xi$ and $u_{\xi_j}^\xi$ are periodic maps which, for small $\epsilon>0$, concentrate at $\xi_i+\Z$ and $\xi_j+\Z$ respectively. Therefore we consider the two intervals $[\xi_i,\xi_j]$ and
$[\xi_j,\xi_i+1]$. Set
\[\bar{p}_{ij}(x)=\frac{\vert\bar{u}_i^\prime(\frac{x-\xi_i}{\epsilon})\cdot\bar{u}_j^\prime(\frac{x-\xi_j}{\epsilon})\vert}
{\|\epsilon u_{\xi_i}^\xi\|\|\epsilon u_{\xi_j}^\xi\|},\]
then we have
\begin{equation}
p_{ij}(x)\leq\bar{p}_{ij}(x)+C\frac{\vert\varkappa_i\vert+\vert\varkappa_j\vert}{\|\epsilon u_{\xi_i}^\xi\|\|\epsilon u_{\xi_j}^\xi\|}.
\label{w-ij}
\end{equation}
From \eqref{generic} we have
\begin{equation}
\begin{split}
&\bar{p}_{ij}(x)\leq\frac{C}{\epsilon}e^{-\frac{\mu_{i+1}^+}{\eps}(x-\xi_i)}e^{-\frac{\mu_{j}^-}{\eps}(\xi_j-x)}
\leq\frac{C}{\epsilon}(e^{-\frac{\mu_{j}^-}{\eps}(\xi_j-\xi_i)}+e^{-\frac{\mu_{i+1}^+}{\eps}(\xi_j-\xi_i)})\\
&\leq\frac{C}{\epsilon}(\mathscr{E}_j^-+\mathscr{E}_{i+1}^+),\;\;x\in[\xi_i,\xi_j]
\end{split}
\label{h<k}
\end{equation}
where we have used the fact that a linear function assumes the maximum at the boundary of the interval of definition and also used $\xi_j\geq\xi_{i+1}$. In the same way one finds that in the interval $[\xi_j,\xi_i+1]$ it results
\[\bar{p}_{ij}\leq\frac{C}{\epsilon}(\mathscr{E}_i^-+\mathscr{E}_{j+1}^+).\] This, \eqref{h<k}, \eqref{w-ij}, \eqref{uxixi}$_1$ and the bound for $\varkappa_j$ conclude the proof of \eqref{vij}.

6. From $u_x^\xi=-\sum_ju_{\xi_j}^\xi$ and \eqref{vij} we have
\[u_x^\xi\cdot u_x^\xi=\sum_ju_{\xi_j}^\xi\cdot u_{\xi_j}^\xi
+\mathrm{O}(\frac{1}{\epsilon}\max_{j,\pm}\mathscr{E}_j^\pm).\]
From this \eqref{uxixij} and \eqref{uxij2-es} it follows
\[\begin{split}
&\epsilon^2\|u_x^\xi\|^2=\epsilon^2\sum_j\|u_{\xi_j}^\xi\|^2
+\mathrm{O}(\epsilon\max_{j,\pm}\mathscr{E}_j^\pm)\\
&=\sum_j\|\bar{u}_j^\prime(\frac{\cdot-\xi_j}{\epsilon})\|^2
+\mathrm{O}(\epsilon\max_{j,\pm}\mathscr{E}_j^\pm)
=\epsilon\sum_j\bar{q}_j^2+\mathrm{O}(\epsilon\max_{j,\pm}\mathscr{E}_j^\pm).
\end{split}\]
We also observe that from \eqref{uxi1} and Lemma \ref{est-uxi} it follows
\[\begin{split}
&x\in[\hat{\xi}_{j-1},\hat{\xi}_j]\;\Rightarrow\\
&W(u^\xi)=W(\bar{u}_j(\frac{x-\xi_j}{\epsilon}))
+\mathrm{O}(e^{-\frac{\mu_j^+}{\epsilon}(x-\xi_{j-1})}+e^{\frac{\mu_{j+1}^-}{\epsilon}(x-\xi_{j+1})}
+e^{-\frac{\alpha\rho}{\epsilon}}\max_{h,\pm}\mathscr{E}_h^\pm),
\end{split}\]
which yields
\[\begin{split}
&\int_{\hat{\xi}_{j-1}}^{\hat{\xi}_j}W(u^\xi)dx=
\epsilon\int_{-\frac{(\xi_j-\xi_{j-1})}{2\epsilon}}^{\frac{(\xi_{j+1}-\xi_j)}{2\epsilon}}W(\bar{u}_j(s))ds
+\mathrm{O}(\epsilon\max_{h,\pm}{\mathscr{E}_h^\pm}^\frac{1}{2})\\
&=\epsilon\int_\R W(\bar{u}_j(s))ds+\mathrm{O}(\epsilon\max_{h,\pm}{\mathscr{E}_h^\pm}^\frac{1}{2})
=\frac{\epsilon}{2}\bar{q}_j^2+\mathrm{O}(\epsilon\max_{h,\pm}{\mathscr{E}_h^\pm}^\frac{1}{2}),
\end{split}\]
where we have also used the fact that the heteroclinic solution $\bar{u}_j$ satisfies the equipartition of energy.
This completes the proof of \eqref{Juxi-est} and concludes the proof of the lemma.
\end{proof}
\begin{proof} (Lemma \ref{gammasquare0})
We begin by deriving an $L^\infty$ bound for the projection $\gamma_j^\xi$ of $\epsilon u_{\xi_j}^\xi$ on $X^{\xi,\top}$.

\begin{lemma}\label{eta}
Let $\gamma_j^\xi\in X^{\xi,\perp}$ be defined by the condition that $\epsilon u_{\xi_j}^\xi+\gamma_j^\xi\in X^\xi$. Then, provided $\epsilon>0$ is sufficiently small, it results
\[\begin{split}
&\|\gamma_j^\xi\|_{W_\epsilon^{1,2}}\leq C\max_{h,\pm}\mathscr{E}_h^\pm,\\
&\|\gamma_j^\xi\|_{L^\infty}\leq\frac{C}{\epsilon^\frac{1}{2}}\max_{h,\pm}\mathscr{E}_h^\pm.
\end{split}\]

\end{lemma}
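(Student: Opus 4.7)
The plan is to reduce the bound on $\gamma_j^\xi$ to a bound on $L^\xi(\epsilon u_{\xi_j}^\xi)$, which by direct computation is controlled by $\mathscr{F}_{\xi_j}(u^\xi)$, and the latter is already estimated in Lemma \ref{est-Fuxi}. The $L^\infty$ bound then follows from the Sobolev-type inequality \eqref{linftyproof} already exploited in the proof of Proposition \ref{quasiinv1}.

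First I would observe the key algebraic identity
\[
L^\xi u_{\xi_j}^\xi=-\mathscr{F}_{\xi_j}(u^\xi),
\]
obtained by differentiating $\mathscr{F}(u^\xi)=\epsilon^2 u_{xx}^\xi-W_u(u^\xi)$ with respect to $\xi_j$ and comparing with $L^\xi v=-\epsilon^2 v_{xx}+W_{uu}(u^\xi)v$.  Next I would use that $L^\xi$ is symmetric and that the spectral subspace $X^\xi$ is invariant under $L^\xi$, so the orthogonal complement $X^{\xi,\perp}$ is also invariant. By definition $\epsilon u_{\xi_j}^\xi+\gamma_j^\xi\in X^\xi$, hence $L^\xi(\epsilon u_{\xi_j}^\xi+\gamma_j^\xi)\in X^\xi$, while $L^\xi\gamma_j^\xi\in X^{\xi,\perp}$. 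Projecting on $X^{\xi,\perp}$ gives
\[
L^\xi\gamma_j^\xi=\epsilon\,P^{\xi,\perp}\mathscr{F}_{\xi_j}(u^\xi).
\]

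Pairing this identity with $\gamma_j^\xi\in X^{\xi,\perp}$ and applying the coercivity estimate \eqref{Lbounds}$_2$ yields
\[
\mu^*\|\gamma_j^\xi\|_{W_\epsilon^{1,2}}^2\le\langle L^\xi\gamma_j^\xi,\gamma_j^\xi\rangle
=\epsilon\langle\mathscr{F}_{\xi_j}(u^\xi),\gamma_j^\xi\rangle
\le\epsilon\|\mathscr{F}_{\xi_j}(u^\xi)\|\,\|\gamma_j^\xi\|_{W_\epsilon^{1,2}},
\]
so $\|\gamma_j^\xi\|_{W_\epsilon^{1,2}}\le\tfrac{\epsilon}{\mu^*}\|\mathscr{F}_{\xi_j}(u^\xi)\|$. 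Combining with the pointwise bound $|\mathscr{F}_{\xi_j}(u^\xi)|\le\tfrac{C}{\epsilon}\max_{h,\pm}\mathscr{E}_h^\pm$ from \eqref{scrF}$_2$ (which, integrated over $[0,1]$, controls the $L^2$ norm by the same quantity) produces the first bound of the lemma. The $L^\infty$ bound is then immediate from $\|v\|_{L^\infty}^2\le\tfrac{2}{\epsilon}\|v\|_{W_\epsilon^{1,2}}^2$, proved for $1$-periodic maps exactly as in \eqref{linftyproof}.

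There is no real obstacle here; the only thing to double check is the self-adjointness argument guaranteeing that $L^\xi X^{\xi,\perp}\subset X^{\xi,\perp}$ (this is immediate since $L^\xi$ is symmetric and $X^\xi$ is the span of eigenfunctions of $L^\xi$). The estimate is tight in the sense that both $\epsilon u_{\xi_j}^\xi$ and its projection onto $X^\xi$ are of order $\epsilon^{1/2}$ in $L^2$, while their difference $\gamma_j^\xi$ picks up the exponentially small off-diagonal terms through the tail interaction embodied in $\mathscr{F}_{\xi_j}(u^\xi)$.
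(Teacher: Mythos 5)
Your proposal is correct and follows essentially the same route as the paper: both arguments hinge on the identity $L^\xi u_{\xi_j}^\xi=-\mathscr{F}_{\xi_j}(u^\xi)$, the invariance of $X^\xi$ (hence of $X^{\xi,\perp}$) under $L^\xi$, the coercivity bound \eqref{Lbounds} paired against $\gamma_j^\xi$, and the pointwise estimate for $\mathscr{F}_{\xi_j}(u^\xi)$ from Lemma \ref{est-Fuxi}, with the $L^\infty$ bound obtained from the Sobolev inequality \eqref{linftyproof}. The only cosmetic difference is that you phrase the projection step via $P^{\xi,\perp}$ whereas the paper expands $\epsilon u_{\xi_j}^\xi+\gamma_j^\xi$ in the orthonormal basis $\varphi_h^\xi$ and then kills the $X^\xi$-component by pairing with $\gamma_j^\xi$.
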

\begin{proof}
In this proof we simply write $\gamma$ instead of $\gamma_j^\xi$.
The condition $\epsilon u_{\xi_j}^\xi+\gamma\in X^\xi$
can be expressed in the form
\[\epsilon u_{\xi_j}^\xi+\gamma=\epsilon\sum_{h=1}^N\langle u_{\xi_j}^\xi,\varphi_h^\xi\rangle\varphi_h^\xi,\]
where as before $\varphi_h^\xi$, $h=1,\ldots,N$ is an orthonormal basis for $X^\xi$.
Since $X^\xi$ is invariant under ${L}^\xi$ and $\gamma\in X^{\xi,\perp}$ we have

\[{L}^\xi(\epsilon u_{\xi_j}^\xi+\gamma)=\epsilon\sum_{h=1}^N\langle u_{\xi_j}^\xi,\varphi_h^\xi\rangle{L}^\xi\varphi_h^\xi.\]

Forming the inner product of this equation with $\gamma$ and using
\[\begin{split}
&\langle{L}^\xi\varphi_h^\xi,\gamma\rangle=0,\\
&{L}^\xi u_{\xi_j}^\xi=-\mathscr{F}_{\xi_j}(u^\xi),
\end{split}\]
on the basis of \eqref{Lbounds} in Proposition \ref{SpectrumL}, we get

\begin{equation}
\begin{split}
&\mu^*\|\gamma\|_{W_\epsilon^{1,2}}^2\leq\langle{L}^\xi\gamma,\gamma\rangle=\epsilon\langle \mathscr{F}_{\xi_j}(u^\xi),\gamma\rangle\\
&\leq\epsilon\|\mathscr{F}_{\xi_j}(u^\xi)\|\|\gamma\|
\leq\epsilon\|\mathscr{F}_{\xi_j}(u^\xi)\|\|\gamma\|_{W_\epsilon^{1,2}},
\end{split}
\label{gammagamma-eq}
\end{equation}
and therefore
\[\begin{split}
&\|\gamma\|_{W_\epsilon^{1,2}}\leq\frac{\epsilon}{\mu^*}\|\mathscr{F}_{\xi_j}(u^\xi)\|,\\
&\|\gamma\|_{L^\infty}\leq\frac{\epsilon^\frac{1}{2}}{\mu^*}\|\mathscr{F}_{\xi_j}(u^\xi)\|.
\end{split}\]
This and Lemma \ref{est-Fuxi} concludes the proof.
\end{proof}

We continue with the proof of Lemma \ref{gammasquare0}. In the remaining part of the proof we drop
 the superscript $\xi$. Define
\[v_j=\frac{u_{\xi_j}^\xi}{\|u_{\xi_j}^\xi\|},\quad v_{ij}=\vert\langle v_i,v_j\rangle\vert,\;i\neq j,\quad w_j=\frac{\gamma_j^\xi}{\|\epsilon u_{\xi_j}^\xi\|},\;\;j=1,\ldots,N,\]
where $\gamma_j^\xi$ is defined in Lemma \ref{eta}.
Note that Lemma \ref{eta}, \eqref{uxixi}$_1$ and \eqref{vij} imply
\begin{equation}
\begin{split}
&\|w_j\|\leq\frac{C}{\epsilon^\frac{1}{2}}\max_{h,\pm}\mathscr{E}_h^\pm,\\
&v_{ij}\leq\frac{C}{\epsilon}\max_{h,\pm}\mathscr{E}_h^\pm.
\end{split}
\label{w-v-est}
\end{equation}
The proof is based on Gram-Schmidt orthonormalization process applied to the vectors $v_j+w_j$ that on the basis of Lemma \ref{eta} belong to $X^\xi$. We have
\[\begin{split}
&\varphi_1=\frac{v_1+w_1}{\|v_1+w_1\|}=v_1+\hat{w}_1,\\
&\hat{w}_1=v_1\frac{1-\|v_1+w_1\|}{\|v_1+w_1\|}+\frac{w_1}{\|v_1+w_1\|},
\end{split}\]
and therefore

\[\begin{split}
&\|\hat{w}_1\|\leq\frac{2\|w_1\|}{\|v_1+w_1\|}\leq 4\|w_1\|,\\
&%\vert\hat{w}_1\vert\leq\vert v_1\vert\frac{\|w_1\|}{\|v_1+w_1\|}+\frac{\vert w_1\vert}{\|v_1+w_1\|}
%\leq C(\eps^{-\frac{1}{2}}\|w_1\|+\vert w_1\vert)\leq\frac{C}{\eps^\frac{1}{2}}\vert w_1\vert.
\end{split}\]
where we have used \eqref{w-v-est} and $\|v_1\|=1$ that imply $\|v_1+w_1\|\geq\frac{1}{2}$.
Let $\psi_n$, $\tilde{w}_n$ and $\hat{w}_n$ be defined by
\[\begin{split}
&\psi_n=v_n +w_n-\sum_{k=1}^{n-1}\langle v_n+w_n,\varphi_k\rangle\varphi_k=v_n+\tilde{w}_n,\\
&\varphi_n=\frac{v_n+\tilde{w}_n}{\|v_n+\tilde{w}_n\|}=v_n+\hat{w}_n.
\end{split}\]
Then there is $C_n>0$ such that, provided $\epsilon>0$ is sufficiently small
\begin{equation}
\begin{split}
&\|\hat{w}_n\|\leq 4\|\tilde{w}_n\|,\\
&\|\tilde{w}_n\|\leq C_n\Big(\sum_{k=1}^n\|w_k\|+\sum_{k=2}^n\sum_{h=1}^{k-1}v_{hk}\Big).
\end{split}
\label{iter}
\end{equation}

 It suffices to prove the second inequality. Indeed, if \eqref{iter}$_2$ holds and $\epsilon>0$ is small, \eqref{iter}$_1$ follows from \eqref{w-v-est} and the same argument that shows $\|\hat{w}_1\|\leq 4\|{w}_1\|$. We prove the second inequality by induction. For $n=2$, observing also that $\tilde{w}_1=w_1$, we have
\[\psi_2=v_2 +w_2-\langle v_2+w_2,\stackrel{\varphi_1}{v_1+\hat{w}_1}\rangle\varphi_1.\]
It follows, using also that $\langle w_2,\varphi_1\rangle=0$,
\[\begin{split}
&\tilde{w}_2=w_2-\langle v_2,v_1\rangle\varphi_1-\langle v_2,\hat{w}_1\rangle\varphi_1
,\\
&\|\tilde{w}_2\|\leq\|{w}_2\|+\|\hat{w}_1\| +v_{12}\leq \|{w}_2\|+C\|{w}_1\|+v_{12},
\end{split}\]
which shows that \eqref{iter}$_2$ holds for $n=2$ with a suitable choice of $C_2$. Now we show that if \eqref{iter}$_2$ holds for $2,\ldots,n$ then it also holds for $n+1$. From
\[\begin{split}
&\psi_{n+1}=v_{n+1} +w_{n+1}-\sum_{k=1}^{n}\langle v_{n+1}+w_{n+1},\stackrel{\varphi_k}{v_k+\hat{w}_k}\rangle\varphi_k\\
&=v_{n+1}+w_{n+1}-\sum_{k=1}^{n}(\langle w_{n+1},\varphi_k\rangle+\langle v_{n+1},v_k\rangle
+\langle v_{n+1},\hat{w}_k\rangle)\varphi_k
\end{split}\]
we obtain, recalling also that $\langle w_{n+1},\varphi_k\rangle=0$, $k=1,\ldots,n$,
\begin{equation}
\begin{split}
&\|\tilde{w}_{n+1}\|\leq\|{w}_{n+1}\|+\sum_{k=1}^nv_{k,n+1}+\sum_{k=1}^n\|\hat{w}_k\|\\
&\leq\|{w}_{n+1}\|+\sum_{k=1}^nv_{k,n+1}+4\sum_{k=1}^n\|\tilde{w}_k\|\\
&\leq\|{w}_{n+1}\|+\sum_{k=1}^nv_{k,n+1}+4\sum_{k=1}^n
C_k\Big(\sum_{j=1}^k\|w_j\|+\sum_{i=2}^k\sum_{h=1}^{i-1}v_{hi}\Big),
\end{split}
\label{n+1}
\end{equation}

where, for the last inequality, we have used \eqref{iter}$_2$ for $2,\ldots,n$.
The inequality \eqref{n+1} establishes \eqref{iter}$_2$ for $n+1$ for a suitable choice of $C_{n+1}$.
With the identification $\eta_j^\xi=\hat{w}_j$, the definition of $v_j$ and $\hat{w}_j$ imply that
$\varphi_j^\xi=\frac{u_{\xi_j}^\xi}{\|u_{\xi_j}^\xi\|}+\eta_j^\xi$, $j=1,\ldots,N$ is an orthonornal
basis for the subspace $X^\xi$. The estimate for $\eta_j^\xi$ follows from \eqref{iter} and \eqref{w-v-est}.
The proof is complete.
\end{proof}

%%%%% $\tau$ ATTENZIONE

\section{Layers Dynamics}
 We now focus on the dynamics of the parabolic equations
\begin{equation}
u_t=\epsilon^2u_{xx}-W_u(u),\;\;x\in(0,1),\;u\in W^{1,2},
\label{parabolic}
\end{equation}
on the set of $1$-periodic maps. As we have indicated in the introduction we expect that $c(\xi)$ determines the  dynamics of \eqref{parabolic}
in a neighborhood of the manifold $\mathcal{M}=\{{u}^\xi:  \xi\in\Xi_\rho\}$ and therefore layers dynamics
since maps in a $W^{1,2}$ neighborhood of $\mathcal{M}$ have a layered structure.
 The scope of this section is to prove that this is indeed the case. We begin by defining a smooth change of coordinate in a neighborhood of $\mathcal{M}$.  As before we denote $L^2$ and $W^{1,2}$ the corresponding subsets of  $1$-periodic maps.
For $\eta>0$, $\rho>0$ small set
\begin{equation}
\mathscr{U}_\rho^{\eta}=\{u\in W^{1,2}: \inf_{\zeta\in\Xi_\rho}\|u-u^\zeta\|_{W_\epsilon^{1,2}}<\eta\}.
\label{Neta}
\end{equation}
\begin{lemma}
\label{Projct}
Set $\eta=a\epsilon^\frac{1}{2}$. Then, if $a>0$ is sufficiently small, there is a constant $c>0$ such that, for each $u\in\mathscr{U}_\rho^{a\epsilon^\frac{1}{2}}$ there is a unique $\xi=\xi(u)\in\Xi_{\rho-c\epsilon}$ that satisfies
\begin{equation}
\begin{split}
&\langle u-{u}^\xi,{u}_{\xi_i}^\xi\rangle=0,\;\;i=1,\ldots,N,\\
&\|u-u^\xi\|\leq\inf_{\zeta\in\Xi_\rho}\|u-u^\zeta\|_{W_\epsilon^{1,2}}.
\end{split}
\label{ortho}
\end{equation}
Moreover
\begin{equation}
\|u-u^\xi\|_{W_\epsilon^{1,2}}\leq\bar{C}\inf_{\zeta\in\Xi_\rho}\|u-u^\zeta\|_{W_\epsilon^{1,2}},
\label{the-projec}
\end{equation}
for some constant $\bar{C}>0$, independent of $\epsilon>0$.
\end{lemma}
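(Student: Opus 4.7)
\medskip

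\noindent\emph{Proof plan.} The plan is to view the system $F_i(\xi,u):=\langle u-u^\xi,u_{\xi_i}^\xi\rangle=0$, $i=1,\ldots,N$, as a nonlinear equation for $\xi$ depending on the parameter $u$, and to invert it around a near-minimizer $\zeta\in\Xi_\rho$ of the distance $\|u-u^\zeta\|_{W_\epsilon^{1,2}}$ by a contraction/implicit function argument. The crucial computation is the Jacobian: at $u=u^\zeta$,
\[
\frac{\partial F_i}{\partial\xi_j}(\zeta,u^\zeta)=-\langle u_{\xi_j}^\zeta,u_{\xi_i}^\zeta\rangle,
\]
and Lemma \ref{est-uxi} gives $\langle u_{\xi_i}^\zeta,u_{\xi_j}^\zeta\rangle=\epsilon^{-1}\bar q_j^{\,2}\delta_{ij}+\mathrm{O}(\epsilon^{-1}\max_{h,\pm}\mathscr{E}_h^\pm)$, so this matrix is diagonally dominant, uniformly invertible, with inverse of norm $\mathrm{O}(\epsilon)$.

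For general $u\in\mathscr{U}_\rho^{a\epsilon^{1/2}}$ and $\xi$ close to $\zeta$, the Jacobian acquires an extra term $\langle u-u^\xi,u_{\xi_i\xi_j}^\xi\rangle$ bounded by $\|u-u^\xi\|\cdot\|u_{\xi_i\xi_j}^\xi\|\leq C a\epsilon^{1/2}\cdot C\epsilon^{-3/2}\delta_{ij}=Ca\,\epsilon^{-1}\delta_{ij}$, which for $a>0$ small is a relative perturbation of the diagonal of size $Ca/\bar q_j^{\,2}$; so the Jacobian stays invertible throughout the neighborhood and the contraction mapping applied to $\xi\mapsto\xi+A^{-1}F(\xi,u)$ (with $A$ the diagonal leading part) produces a unique fixed point $\xi=\xi(u)$ in a ball of radius $\mathrm{O}(\epsilon)$ around $\zeta$. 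Since $\zeta\in\Xi_\rho$ and $|\xi-\zeta|\leq c\epsilon$ for a constant $c$ depending on $\max_j\mu_j$, all the gaps $\xi_j-\xi_{j-1}$ still exceed $(\rho-c\epsilon)/\mu_j$, giving $\xi(u)\in\Xi_{\rho-c\epsilon}$.

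The orthogonality conditions are exactly the critical point equations for $\Phi_u(\xi):=\tfrac12\|u-u^\xi\|^2$, and the Jacobian computation above shows that the Hessian of $\Phi_u$ at $\xi(u)$ is positive definite. Hence $\xi(u)$ is a strict local minimizer. Choosing $\zeta\in\Xi_\rho$ with $\|u-u^\zeta\|_{W_\epsilon^{1,2}}$ arbitrarily close to the infimum $d$, one has $\zeta$ in the neighborhood where $\xi(u)$ minimizes $\Phi_u$, so $\|u-u^{\xi(u)}\|\leq\|u-u^\zeta\|\leq\|u-u^\zeta\|_{W_\epsilon^{1,2}}$ and \eqref{ortho}$_2$ follows by passing to the infimum.

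For \eqref{the-projec}, decompose $u-u^\xi=(u-u^\zeta)+(u^\zeta-u^\xi)$ and estimate the second summand by a Taylor expansion,
\[
u^\zeta-u^\xi=\sum_j(\zeta_j-\xi_j)\,u_{\xi_j}^\xi+\mathrm{O}(|\xi-\zeta|^2).
\]
The $L^2$ size of $\xi-\zeta$ is controlled by inverting the Gram matrix: from $\|u^\zeta-u^\xi\|\leq 2d$ and $\|u^\zeta-u^\xi\|^2\approx\epsilon^{-1}\sum_j\bar q_j^{\,2}(\zeta_j-\xi_j)^2$, one gets $|\xi-\zeta|\leq C\epsilon^{1/2}d$. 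Combining this with the $W_\epsilon^{1,2}$ bounds $\|u_{\xi_j}^\xi\|_{W_\epsilon^{1,2}}\leq C\epsilon^{-1/2}$ (which follows from $\|u_{\xi_j}^\xi\|\sim\epsilon^{-1/2}$ and $\epsilon\|u_{x\xi_j}^\xi\|\leq\epsilon\|u_{xx}^\xi\|\leq C\epsilon^{-1/2}$ in Lemma \ref{est-uxi}) gives $\|u^\zeta-u^\xi\|_{W_\epsilon^{1,2}}\leq Cd$, hence $\|u-u^\xi\|_{W_\epsilon^{1,2}}\leq d+Cd=\bar C d$.

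The main delicate point is the matching of constants: one must choose $a$ small enough that (i) the off-diagonal perturbations of the Gram matrix and the term $\langle u-u^\xi,u_{\xi_i\xi_j}^\xi\rangle$ together keep the Hessian uniformly positive definite on a tubular neighborhood, (ii) the contraction radius $\mathrm{O}(\epsilon)$ is smaller than the gap between $\Xi_\rho$ and $\partial\Xi_{\rho-c\epsilon}$, and (iii) the second-order remainder in the expansion of $u^\zeta-u^\xi$ is dominated by the linear term. All three requirements are compatible because the smallness in $a$ competes only against $\epsilon$-independent quantities $\bar q_j$ and $\mu_j$.
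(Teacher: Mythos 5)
Your approach — setting up $F_i(\xi,u)=\langle u-u^\xi,u_{\xi_i}^\xi\rangle=0$ as a nonlinear equation and inverting it by a Newton/contraction iteration around a near-minimizer $\zeta$ — is genuinely different from the paper's route, which minimizes $\zeta\mapsto\|u-u^\zeta\|$ over a small ball $\bar B_{\omega\epsilon}(\bar\xi)$ and recovers \eqref{ortho}$_1$ as the first-order condition of a minimizer in the interior. Both are standard tubular-neighborhood strategies, and the Jacobian computation you do is essentially the content of the paper's estimates \eqref{uxixi}, \eqref{vij}. The minimization route has the small advantage of delivering \eqref{ortho}$_2$ for free, whereas the IFT route needs a separate comparison.

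However, there is a genuine gap: you never establish, nor invoke, the separation estimate that the paper isolates as Lemma \ref{lemmaL2}, namely that $|\zeta-\xi|\geq\omega\epsilon$ forces $\|u^\zeta-u^\xi\|\geq m_0(\omega)\,\epsilon^{1/2}$, with a matching two-sided comparison $\|u^\zeta-u^\xi\|\asymp\epsilon^{-1/2}|\zeta-\xi|$ for $|\zeta-\xi|\leq\omega_0\epsilon$. Without the lower bound you cannot (i) conclude that the $\xi$ produced by the contraction is independent of the auxiliary near-minimizer $\zeta$ you start from, (ii) rule out a second solution of \eqref{ortho} located at $|\xi-\xi'|\gtrsim\epsilon$ in $\Xi_{\rho-c\epsilon}$ — two far-apart local wells of $\Phi_u(\xi)=\tfrac12\|u-u^\xi\|^2$, each with value $<a\epsilon^{1/2}$, are a priori possible since $\Phi_u$ is only shown to be locally convex near each fixed point — and (iii) justify the step where you claim that every $\zeta$ close to the infimum ``lies in the neighborhood where $\xi(u)$ minimizes $\Phi_u$.'' You also use the approximation $\|u^\zeta-u^\xi\|^2\approx\epsilon^{-1}\sum_j\bar q_j^2(\zeta_j-\xi_j)^2$ to get $|\xi-\zeta|\leq C\epsilon^{1/2}d$, but this is precisely the two-sided bound of Lemma \ref{lemmaL2}, whose proof is not a one-line Taylor expansion: the lower bound for $|\zeta-\xi|\geq\omega\epsilon$ hinges on the non-self-intersection of $\bar u_j$ and on $\bar u_j(0)\notin A$ (the paper's \eqref{us-aj}, \eqref{no-loop}), and occupies Section~\ref{Proflemma}. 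To make your argument complete you would have to state and prove (or at least cite) that separation result as an ingredient; as written the proof establishes local existence and local uniqueness near a chosen $\zeta$, but not the claimed unique $\xi(u)\in\Xi_{\rho-c\epsilon}$.
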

\begin{proof}
We need the following result that we prove in Section \ref{Proflemma},
\begin{lemma}
\label{lemmaL2}
There are positive constants $\omega_0, \omega_1$, $C_\pm^0, C_\pm^1$ and positive maps  $m_0:(0,\omega_0]\rightarrow\R$, $m_1:(0,\omega_1]\rightarrow\R$  such that, for $\xi,\zeta\in\Xi_\rho$, it results
\begin{equation}
\begin{split}
&\vert\zeta-\xi\vert\geq\omega\epsilon\Rightarrow \|u^\zeta-u^{\xi}\|_{L^\infty}\geq m_1(\omega),\;\;\omega\in(0,\omega_1],\\
&\vert\zeta-\xi\vert\leq\omega_1\epsilon\Rightarrow \frac{C_-^1}{\epsilon}\vert\zeta-\xi\vert\leq\|u^\zeta-u^{\xi}\|_{L^\infty}
\leq\frac{C_+^1}{\epsilon}\vert\zeta-\xi\vert,
\end{split}
\label{Linftycase}
\end{equation}
and
\begin{equation}
\begin{split}
&\vert\zeta-\xi\vert\geq\omega\epsilon\Rightarrow \|u^\zeta-u^{\xi}\|\geq m_0(\omega)\epsilon^\frac{1}{2}
,\;\;\omega\in(0,\omega_0],\\
&\vert\zeta-\xi\vert\leq\omega_0\epsilon\Rightarrow \frac{C_-^0}{\epsilon^\frac{1}{2}}\vert\zeta-\xi\vert\leq\|u^\zeta-u^{\xi}\|
\leq\frac{C_+^0}{\epsilon^\frac{1}{2}}\vert\zeta-\xi\vert.
\end{split}
\label{L2case}
\end{equation}
Moreover
\begin{equation}
\vert\zeta-\xi\vert\leq\omega_0\epsilon\Rightarrow \|u_x^\zeta-u_x^{\xi}\|
\leq\frac{C_+^0}{\epsilon^\frac{3}{2}}\vert\zeta-\xi\vert.\quad\quad
\label{uxix-est}
\end{equation}

\end{lemma}
1. From the definition of $\mathscr{U}_\rho^{a\epsilon^\frac{1}{2}}$ we can associate to each $u\in\mathscr{U}_\rho^{a\epsilon^\frac{1}{2}}$ a $\bar{\xi}\in\Xi_\rho$ such that  $\|u-u^{\bar{\xi}}\|<a\epsilon^\frac{1}{2}$. Let $\omega\in(0,\omega_0]$ a number to be chosen later. For $a\in(0,\frac{1}{2}m_0(\frac{\omega}{2})]$, $m_0$ as in \eqref{L2case}, the problem
\begin{equation}
\|u-u^\xi\|=\min_{\zeta\in\bar{B}_{\omega\epsilon}(\bar{\xi})}\|u-u^\zeta\|<a\epsilon^\frac{1}{2},
\label{local-min}
\end{equation}
has a solution $\xi\in B_{\frac{\omega}{2}\epsilon}(\bar{\xi})$.

Existence follows from the compactness of $\bar{B}_{\omega\epsilon}(\bar{\xi})$ and the continuity of the map $\zeta\rightarrow\|u-u^\zeta\|$. Lemma \ref{lemmaL2}, $a\in(0,\frac{1}{2}m_0(\frac{\omega}{2})]$, and
 $\vert\zeta-\bar{\xi}\vert\geq\frac{\omega}{2}\epsilon$ imply
\begin{equation}
 \|u-u^\zeta\|\geq\|u^{\bar{\xi}}-u^\zeta\|-\|u-u^{\bar{\xi}}\|\geq(m_0(\frac{\omega}{2})-a)\epsilon^\frac{1}{2}\geq a\epsilon^\frac{1}{2}.
\label{out}
\end{equation}
This implies $\xi\in B_{\frac{\omega}{2}\epsilon}(\bar{\xi})$ and concludes the proof of 1.

Note that while $\bar{\xi}\in\Xi_\rho$ it may occur that $B_{\frac{\omega}{2}\epsilon}(\bar{\xi})\setminus\Xi_\rho\neq\emptyset$. Therefore it may happen that  $\xi\not\in\Xi_\rho$ but, since $\vert\xi-\bar{\xi}\vert<\frac{\omega}{2}\epsilon$, there is a constant $c>0$ such that $\xi\in\Xi_{\rho-c\epsilon}$.

2. The minimizer $\xi$ determined in 1. satisfies \eqref{ortho}. 

The fact that $\xi$ satisfies \eqref{ortho}$_2$ follows from \eqref{local-min} and \eqref{out} that imply
\[\|u-u^\xi\|=\min_{\zeta\in\Xi_{\rho}}\|u-u^\zeta\|.\]
 On the other hand, since $\xi$ belong to the interior of $B_{\omega\epsilon}(\bar{\xi})$,
\eqref{ortho}$_1$ follows from the minimality of $\xi$ and a standard argument.

3. The solution $\xi$ of \eqref{ortho} is unique, independent from the auxiliary point $\bar{\xi}$ and is Lipschitz  continuous on $u\in L^2$. 

Given $u,v\in\mathscr{U}_\rho^{a\epsilon^\frac{1}{2}}$ let $\xi$ and $\eta$ be the corresponding solutions of \eqref{ortho} claimed in 2. We have
\begin{equation}
\begin{split}
&0=(\langle u-u^\xi,u_{\xi_j}^\xi\rangle)-(\langle v-u^\eta,u_{\xi_j}^\eta\rangle)\\
&=(\langle u^\eta-u^\xi,u_{\xi_j}^\xi\rangle)+(\langle u-v,u_{\xi_j}^\xi\rangle)+(\langle
 v-u^\eta,u_{\xi_j}^\xi-u_{\xi_j}^\eta\rangle),
 \end{split}
 \label{0+0}
 \end{equation}
where $(w_i)\subset\R^N$ denotes the vector with components $w_1,\ldots,w_N$.

Assume that $\vert\xi-\eta\vert\leq\omega\epsilon$. Then from \eqref{uzeta-uxi}, \eqref{ui.ujprime}, \eqref{D2uh} (cfr. the proof of Lemma \ref{lemmaL2} and \eqref{uxixij} we obtain
\[\vert\langle u^\zeta-u^\xi,u_{\xi_j}^\xi\rangle-\frac{\bar{q}_j^2}{\epsilon}(\eta_j-\xi_j)\vert\leq \frac{C}{\epsilon^2}\vert\zeta-\xi\vert^2+\mathrm{O}(e^{-\frac{\alpha\rho}{\epsilon}})\vert\zeta-\xi\vert,\]
and therefore, if we take $\omega=\frac{\bar{q}_m^2}{2C}$, $\bar{q}_m=\min_h\bar{q}_h$, we have

\[\begin{split}
&\vert(\langle u^\eta-u^\xi,u_{\xi_j}^\xi\rangle)\vert\geq\frac{\bar{q}_m^2}{\epsilon}\vert\eta-\xi\vert
-\frac{C}{\epsilon^2}\vert\eta-\xi\vert^2\\
&\geq
\frac{\bar{q}_m^2-C\omega}{\epsilon}\vert\eta-\xi\vert\geq \frac{\bar{q}_m^2}{2\epsilon}\vert\eta-\xi\vert,
\end{split}\]

A similar computation yields
\[
\|u_{\xi_j}^\xi-u_{\xi_j}^\eta\|\leq\frac{C}{\epsilon^\frac{3}{2}}\vert\eta-\xi\vert,
\]
and we obtain
\[\vert(\langle
 v-u^\eta,u_{\xi_j}^\xi-u_{\xi_j}^\eta\rangle)\vert\leq \frac{Ca}{\epsilon}\vert\eta-\xi\vert.\]
 We also have
\[\vert(\langle
u- v,u_{\xi_j}^\xi\rangle)\vert\leq\frac{C}{\epsilon^\frac{1}{2}}\|u-v\| \]
\noindent
From these estimates and \eqref{0+0} it follows
\begin{equation}
0\geq\frac{\frac{q_m^2}{2}-Ca}{\epsilon}\vert\eta-\xi\vert-\frac{C}{\epsilon^\frac{1}{2}}\|u-v\|
\geq\frac{q_m^2}{4\epsilon}\vert\eta-\xi\vert-\frac{C}{\epsilon^\frac{1}{2}}\|u-v\|,
\label{no=0}
\end{equation}
where $a\in(0,\frac{q_m^2}{4C}]$.

From \eqref{out} if a minimizer of \eqref{ortho}$_2$ exists it belongs to $B_{\frac{\omega}{2}\epsilon}(\bar{\xi})$. Therefore if $\xi$ and $\eta$ are solutions of \eqref{ortho}  corresponding to the same $u\in\mathscr{U}_\rho^{a\epsilon^\frac{1}{2}}$ we have $\vert\eta-\xi\vert<\omega\epsilon$ from \eqref{out} and we can apply \eqref{no=0} with $u=v$ that yields $\eta=\xi$. This establishes uniqueness. The same argument implies $\xi=\eta$ if $\xi$ and $\eta$ are minimizers of \eqref{local-min} corresponding to two different auxiliary points $\bar{\xi}$ and $\bar{\eta}$. Clearly \eqref{no=0} also yields the Lipschitz continuity of the solution of \eqref{ortho}.

To complete the proof set $\gamma=\inf_{\zeta\in\Xi_\rho}\|u-u^\zeta\|_{W_\epsilon^{1,2}}$. Since we have $\gamma<a\epsilon^\frac{1}{2}$ we can fix a $\bar{\xi}\in\Xi_\rho$ that satisfies
 \[\|u-u^{\bar{\xi}}\|_{W_\epsilon^{1,2}}\leq k\gamma<a\epsilon^\frac{1}{2},\]
 for some $k\in(1,2]$.
  This and $\|u-u^\xi\|\leq\gamma$ that follows from \eqref{ortho}$_2$ imply $\|u^{\bar{\xi}}-u^\xi\|\leq (1+k)\gamma$. From the first part of the proof we also have $\vert\xi-\bar{\xi}\vert<\omega_0\epsilon$ and we can apply \eqref{L2case}$_2$ in Lemma \ref{lemmaL2} that yields $\frac{C_-^0}{\epsilon^\frac{1}{2}}\vert\bar{\xi}-\xi\vert\leq(1+k)\gamma$ and in turn
\[\begin{split}
&\epsilon\|u_x^{\bar{\xi}}-u_x^\xi\| \leq\frac{C_+^0}{\epsilon^\frac{1}{2}}\vert\bar{\xi}-\xi\vert\leq\frac{C_+^0}{C_-^0}(1+k)\gamma,\\
&\Rightarrow\quad\|u^{\bar{\xi}}-u^\xi\|_{W_\epsilon^{1,2}}\leq(1+\frac{C_+^0}{C_-^0})(1+k)\gamma,
\end{split}\]
and, using also $k\leq 2$, we finally obtain
\[\|u-u^\xi\|_{W_\epsilon^{1,2}}\leq\|u-u^{\bar{\xi}}\|_{W_\epsilon^{1,2}}+\|u^{\bar{\xi}}-u^\xi\|_{W_\epsilon^{1,2}}
\leq (2+3(1+\frac{C_+^0}{C_-^0}))\gamma,\]
that prove the last statement of the Lemma. The proof is complete.
\end{proof}
Set
\[\begin{split}
&\mathscr{W}_\rho^{a\epsilon^\frac{1}{2}}=\{(\xi,w):\xi\in\Xi_\rho, w\in W^{1,2},\;\|w\|_{W_\epsilon^{1,2}}<
a\epsilon^\frac{1}{2},\;(\langle w,u_{\xi_j}^\xi\rangle)=0\},\\
&\mathscr{U}(\mathscr{W}_\rho^{a\epsilon^\frac{1}{2}})=\{u=u^\xi+w:(\xi,w)\in\mathscr{W}_\rho^{a\epsilon^\frac{1}{2}}\}.
\end{split}\]

Lemma \ref{Projct} implies that $\mathscr{U}(\mathscr{W}_\rho^{a\epsilon^\frac{1}{2}})\subset\mathscr{U}_\rho^{a\epsilon^\frac{1}{2}}$ and
  $\mathscr{U}_\rho^{a\epsilon^\frac{1}{2}}\subset\mathscr{U}(\mathscr{W}_{\rho-c\epsilon}^{\bar{C}a\epsilon^\frac{1}{2}})$  and that the map
\[\mathscr{W}_\rho^{a\epsilon^\frac{1}{2}}\ni(\xi,w)\rightarrow u=u^\xi+w\in\mathscr{U}(\mathscr{W}_\rho^{a\epsilon^\frac{1}{2}})\]
is a surjection. Note also that $\mathscr{U}(\mathscr{W}_\rho^{a\epsilon^\frac{1}{2}})$ is open in $W_\epsilon^{1,2}$. This follows from the fact that $(\xi_0,w_0)\in\mathscr{W}_\rho^{a\epsilon^\frac{1}{2}}$ implies $\xi_0\in\Xi_{\rho+\delta_0}$ and $\|w\|_{W_\epsilon^{1,2}}<a\epsilon^\frac{1}{2}-\delta_0$ for some $\delta_0>0$ that depends on $(\xi_0,w_0)$.
In conclusion $\mathscr{U}(\mathscr{W}_\rho^{a\epsilon^\frac{1}{2}})$ is a tubular neighborhood of the basic manifold $\mathcal{M}$ with a well defined system of coordinate and for each $u\in\mathscr{U}(\mathscr{W}_\rho^{a\epsilon^\frac{1}{2}})$
there is a  unique pair $\xi\in\Xi_\rho$ and $w\in W^{1,2}$ such that
\begin{equation}
\begin{split}
&u={u}^\xi+w,\\
&\langle w,u_{\xi_j}^\xi\rangle=0,\;\;j=1,\ldots,N.
\end{split}
\label{decomposition}
\end{equation}

\subsection{The proof of Theorems \ref{stay-near} and \ref{dynamic}}

The proof of Theorem \ref{stay-near} is based on a geometric characterization of the
energy functional $W^{1,2}\ni u\rightarrow J_\epsilon({u})=\int_0^1(\frac{\epsilon^2}{2}\vert{u}_x\vert^2+W({u}))dx$ for $u$ near $\mathcal{M}$. By mean of Theorem 2.1 in \cite{BFK} we show that a map $u$ laying in a neighborhood of $\mathcal{M}$ of size $\eta^*=\mathrm{O}(\epsilon^\frac{3}{2})$, if has energy $J_\epsilon(u)$ slightly larger than the energy of the maps $u^\xi\in\mathcal{M}$, ( $J_\epsilon(u)-J_\epsilon(u^\xi)=\mathrm{O}(e^{-\frac{\alpha\rho}{\epsilon}})$), actually lays in a much smaller neighborhood of size
$\eta_*=\mathrm{O}(e^{-\frac{\alpha\rho}{\epsilon}})$.

From Lemma \ref{Projct} for $u\in\mathscr{U}(\mathscr{W}_\rho^{a\epsilon^\frac{1}{2}})$ the change of variable $u\leftrightarrow(\xi,w)$ is well defined and the decomposition $u=u^\xi+w$ entails a decomposition of $J_\epsilon(u^\xi+w)-J_\epsilon(u^\xi)$:

\begin{equation}
J_\epsilon({u}^\xi+w)-J_\epsilon({u}^\xi)=\mathscr{L}^\xi(w)+\mathscr{Q}^\xi(w)+\mathscr{N}^\xi(w),
\label{Ew-dec}
\end{equation}
where
\begin{equation}
\begin{split}
&\mathscr{L}^\xi(w)=\int_0^1(\epsilon^2{u}_x^\xi\cdot w_x+W_u({u}^\xi)w)dx,\\
&\mathscr{Q}^\xi(w)=\frac{1}{2}\int_0^1(\epsilon^2\vert w_x\vert^2+W_{uu}({u}^\xi)w\cdot w)dx,\\
&\mathscr{N}^\xi(w)=\int_0^1(W({u}^\xi+w)-W({u}^\xi)-W_u({u}^\xi)-\frac{1}{2}W_{uu}({u}^\xi)w\cdot w)dx.
\end{split}
\label{Ew-dec1}
\end{equation}

Next we derive estimates that describe the geometry of the functional $u\rightarrow J_\epsilon(u)$ and allow for the application of Theorem 2.1 in \cite{BFK}.

We have
\begin{equation}
\vert\mathscr{L}^\xi(w)\vert
\leq\delta_L\|w\|_{W_\epsilon^{1,2}},\;\;\delta_L=\mathrm{O}( e^{-\frac{\alpha\rho}{\epsilon}}).
\label{deltaL}
\end{equation}
Indeed integrating by parts and using \eqref{scrF} yields
\[\begin{split}
&\vert\mathscr{L}^\xi(w)\vert=\vert\int_0^1\mathscr{F}(u^\xi)\cdot wdx\vert\\
&\leq e^{-\frac{\alpha\rho}{\epsilon}}\max_{h,\pm}{\mathscr{E}_h^\pm}^\frac{1}{2}\|w\|
\leq e^{-\frac{\alpha\rho}{\epsilon}}\|w\|_{W_\epsilon^{1,2}}.
\end{split}\]
Since $w\perp\mathrm{span}(\frac{u_{\xi_1}^\xi}{\|u_{\xi_1}^\xi\|},\ldots,\frac{u_{\xi_N}^\xi}{\|u_{\xi_N}^\xi\|})$  Lemma \ref{w-Lw-LwLw} below implies
\begin{equation}
\mathscr{Q}^\xi(w)=\frac{1}{2}\langle{L}^\xi w,w\rangle\geq\frac{1}{2}{\mu}^*\|w\|_{W_\epsilon^{1,2}}^2,
\label{Qscript}
\end{equation}

To estimate $\mathscr{N}^\xi(w)$ we observe that 
 $u\in\mathscr{U}(\mathscr{W}_\rho^{a\epsilon^\frac{1}{2}})$ implies $\|w\|_{W_\epsilon^{1,2}}\leq a\epsilon^\frac{1}{2}$ and, arguing as in \eqref{linftyproof}, we have
 \begin{equation}
 \|w\|_{L^\infty}\leq C\epsilon^{-\frac{1}{2}}\|w\|_{W_\epsilon^{1,2}}.
 \label{w-Linfty}
 \end{equation}
 It follows $\|w\|_{L^\infty}\leq C$ and therefore $\vert W_{uuu}(u^\xi+w)\vert\leq C$ and we conclude
\begin{equation}
\vert\mathscr{N}^\xi(w)\vert\leq C\vert w\vert^3\leq\frac{C}{\epsilon^{\frac{3}{2}}}\|w\|_{W_\epsilon^{1,2}}^3.
\label{N}
\end{equation}
From \eqref{Juxi-est} we also have
\[J_\epsilon({u}^\xi)-J_\epsilon({u}^{\bar{\xi}})\leq\delta_0,\;\;\xi,\bar{\xi}\in\Xi_\rho,\]
with
\begin{equation}
\delta_0\leq Ce^{-\frac{\alpha\rho}{\eps}}.
\label{delta0}
\end{equation}

We restrict to the class of maps that satisfy
\begin{equation}
\delta_1\geq J_\epsilon(u)-\sup_{\xi\in\Xi_\rho} J_\epsilon({u}^\xi),\;\;\delta_1=\mathrm{O}(e^{-\frac{\alpha\rho}{\eps}}).
\label{int-E-bound}
\end{equation}
Since the energy is nonincreasing along the flow generated by the parabolic equation \eqref{parabolic}, if this condition is satisfies at a certain time $\bar{t}$ then is satisfied for $t\geq\bar{t}$.
\vskip.2cm

The estimates \eqref{deltaL},\eqref{Qscript},\eqref{N},\eqref{delta0}  and the assumption \eqref{int-E-bound} allow us to compute the numbers $\eta^*$ and $\eta_*$ defined in
 Theorem 2.1 in \cite{BFK}. In the case at hand it results
\[\eta^*=\frac{\mu^*}{C\bar{C}}\epsilon^\frac{3}{2}=C\epsilon^\frac{3}{2},\quad \eta_*=\frac{\delta_L}{\mu^*}+\sqrt{\frac{\delta_L}{\mu^*}^2+2\frac{\delta_0+\delta_1}{\mu^*}}
=\mathrm{O}(e^{-\frac{\alpha\rho}{\eps}}),\]
($\bar{C}$ is the constant in \eqref{the-projec}).
Therefore, on the basis of Theorem 2.1 in \cite{BFK}, we can state that, for $\epsilon>0$ sufficiently small,
\begin{equation}
u=u^\xi+w\in \mathscr{U}(\mathscr{W}_\rho^{C\epsilon^\frac{3}{2}}),\;\delta_1=\mathrm{O}(e^{-\frac{\alpha\rho}{\eps}})
\;\;\Rightarrow\;\;\|w\|_{W_\epsilon^{1,2}}< Ce^{-\frac{\alpha\rho}{\eps}}.
\label{w-bound}
\end{equation}
Since \eqref{parabolic} is the $L^2$-gradient system associated to the functional $J_\epsilon(u)$, if the initial datum $u_0=u^{\xi_0}+w_0\in\mathscr{U}(\mathscr{W}_\rho^{C\epsilon^\frac{3}{2}})$ satisfies \eqref{int-E-bound}, the same is true for the the solution $u(t,u_0)=u^{\xi(t)}+w(t)$ of \eqref{parabolic} through $u_0$. It follows that $u(t,u_0)$ creeps along $\mathcal{M}$ and $w(t)$ satisfies \eqref{w-bound} for $t\in[0,T)$ where either $T=+\infty$ or $\xi(T)\in\partial\Xi_\rho$
and a standard computation, see Theorem 2.2 in \cite{BFK}, yields
\[T\geq e^{\frac{\alpha\rho}{\eps}}(d_{L^2}(u_0,\partial\mathcal{M})-C\epsilon^\frac{1}{2})^2\geq e^{\frac{\alpha\rho}{\eps}}d^2,\;\;d=\inf_{\zeta\in\partial\Xi_\rho}\|u^{\xi_0}-u^\zeta\|.\]
This concludes the proof of Theorem \ref{stay-near}.
 In Fig. \ref{fig4} we explain the meaning of the numbers $\eta^*$ and $\eta_*$ and sketch the qualitative structure of a cross section of the graph of $u\rightarrow J_\epsilon(u)$ for $u$ near ${\mathcal{M}}$.
 \begin{figure}
  \begin{center}
\begin{tikzpicture}
\draw [blue, very thick](-3,0).. controls (-2.8,-.1) and(-2.5,3)..(-1.8,5);
\draw [blue, very thick](-3,0).. controls (-3.2,-.1) and(-3.5,3)..(-4.2,5);

\draw [blue, very thick](-1.8,5).. controls (-1,5.5) and(-.8,3)..(-.5,2.5);
\draw [blue, very thick](-4.2,5).. controls (-5,5.5) and(-5.2,3)..(-5.5,2.5);

\draw [red,dotted](-2.95,-1)--(-2.95,2);

\draw [red,dotted](-.95,-1)--(-.95,5);

\node at (-.95,-1.2){$\eta^*$};

\draw [red,dotted](-2.64,-1)--(-2.64,2);

\node at (-2.64,-1.2){$\eta_*$};

\draw (-3.5,-.12)--(-1.5,.35);
\draw (-5,1.4)--(-2.67,1.4);
\draw (-5,.02)--(-2.95,.02);

\draw (-5,.02)--(-2.95,.02);

%\node at (-3.9,.02){$J_\epsilon(u^\xi)$};

\draw (-7,-1)--(1,-1);
\node at (1.2,-.8){$w$};

\draw [<-] (-4.5,1.4)--(-4.5,1.1);
\draw [<-](-4.5,.02)--(-4.5,.3);
\node at (-4.5,.7){$\delta_1$};

\draw[ fill] (-2.95,.02) circle [radius=0.05];
\node at (-3,-.2){$\mathcal{M}$};

\draw [](-3,0).. controls (-2.8,-.1) and(-2.52,3)..(-2.4,4);
\draw [](-3,0).. controls (-3.2,-.1) and(-3.48,3)..(-3.6,4);

\node at (-.9,.4){$\mathscr{L}^\xi(w)$};
\node at (-1.2,2.){$\mathscr{Q}^\xi(w)$};

\draw [] (-2.6,2.3)--(-1.7,2.);

%\node at (-.9,1.9){$\vert\mathscr{N}^\xi(w)\vert\leq\frac{C_0}{\epsilon^\frac{3}{2}}\|w\|^{2+p}$};
\end{tikzpicture}
\end{center}
\caption{The structure of $J_\epsilon(u)$ near $\mathcal{M}$.}
\label{fig4}
%\caption{$W$ in the infinite dimensional case.}
%\caption{The symmetry of $W$: finite dimension (left); infinite dimension (right)}
%\label{figsym}
\end{figure}
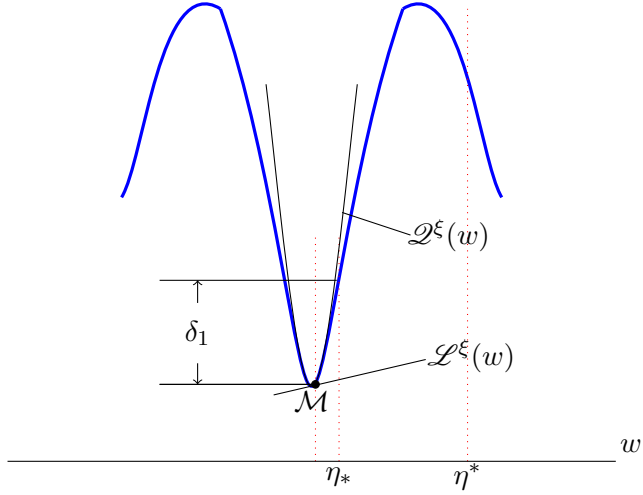

\vskip.3cm

We now continue with the proof of Theorem \ref{dynamic}.
On the basis of Theorem \ref{stay-near} we can assume that $u_0=u^{\xi_0}+w_0$ is chosen so that $u(t,u_0)$ remains in $\mathscr{U}(\mathscr{W}_\rho^{C\epsilon^\frac{3}{2}})$
 and \eqref{w-bound} holds for an exponentially long time.
In particular we have that the change of variables $u\rightarrow (\xi,w)$ is well defined along the motion and \eqref{parabolic} is equivalent to \begin{equation}
 \begin{split}
&w_t+\sum_{h=1}^N{u}_{\xi_h}^\xi\dot{\xi}_h=\mathscr{F}({u}^\xi+w)\\
&=\mathscr{F}({u}^\xi)+\mathscr{F}({u}^\xi+w)-\mathscr{F}({u}^\xi)\\
&=\mathscr{F}({u}^\xi)-{L}^\xi w-N^\xi(w),
\end{split}
\label{par-equiv}
\end{equation}

where
$N^\xi(w)=W_u({u}^\xi+w)-W_u({u}^\xi)-W_{uu}({u}^\xi)w$. Now, using \eqref{par-equiv}, we derive an equation for the evolution of $\langle w,{L}^\xi w\rangle=\int_0^1(\epsilon^2\vert w_x\vert^2+W_{uu}({u}^\xi)w\cdot w)dx$. We have
\begin{equation}
\begin{split}
&\frac{1}{2}\frac{d}{dt}\langle w,{L}^\xi w\rangle
=\langle{L}^\xi w,w_t\rangle+\frac{1}{2}\int_0^1W_{uuu}({u}^\xi)\sum_{h=1}^N{u}_{\xi_h}^\xi\dot{\xi}_hw\cdot wdx\\
&=-\|{L}^\xi w\|^2-\langle{L}^\xi w,N^\xi(w)\rangle+\langle{L}^\xi w,\mathscr{F}(u^\xi)-\sum_{h=1}^N{u}_{\xi_h}^\xi\dot{\xi}_h
\rangle\\
&+\frac{1}{2}\int_0^1W_{uuu}({u}^\xi)\sum_{h=1}^N{u}_{\xi_h}^\xi\dot{\xi}_hw\cdot wdx.
\end{split}
\label{wLw-eq}
\end{equation}

 Differentiating \eqref{decomposition}$_2$ we get the identity
\begin{equation}
\langle w_t,{u}_{\xi_j}^\xi\rangle+ \sum_{h=1}^N\langle w,{u}_{\xi_j\xi_h}^\xi\rangle\dot{\xi}_h=0,\;\;j=1,\ldots,N.
\label{iden-w}
\end{equation}

Inserting the expression of $w_t$ given by \eqref{par-equiv} into \eqref{iden-w} yields
\begin{equation}
\sum_{h=1}^N(\langle{u}_{\xi_h}^\xi,{u}_{\xi_j}^\xi\rangle-\langle w,{u}_{\xi_j\xi_h}^\xi\rangle)\dot{\xi}_h=
\langle\mathscr{F}({u}^\xi)-L^\xi w-N^\xi(w),{u}_{\xi_j}^\xi\rangle,\;\;j=1,\ldots,N,
\label{xidot-impl-1}
\end{equation}

 We rewrite \eqref{xidot-impl-1} in the form
\begin{equation}
\dot{\xi}_j=\frac{1}{\|u_{\xi_j}^\xi\|^2}\langle\mathscr{F}({u}^\xi)-L^\xi w-N^\xi(w),u_{\xi_j}^\xi \rangle
+\frac{\langle w,{u}_{\xi_j\xi_j}^\xi\rangle}{\|u_{\xi_j}^\xi\|^2}\dot{\xi}_j
-\sum_{h\neq j}\frac{\langle u_{\xi_j}^\xi,u_{\xi_h}^\xi\rangle}{\|u_{\xi_j}^\xi\|^2}
\dot{\xi}_h,
\label{diaddotxi}
\end{equation}
where we have used \eqref{uxixij} that implies ${u}_{\xi_j\xi_h}^\xi=0$ for $h\neq j$. From \eqref{w-bound} and \eqref{uxixi} it follows that, for small $\epsilon>0$, this equation can be solved for $\dot{\xi}$. Note that by \eqref{cbar-def} $\langle\mathscr{F}({u}^\xi),\frac{u_{\xi_j}^\xi}{\|u_{\xi_j}^\xi\|}\rangle=\bar{c}_j(\xi)$ and that
$L^\xi u_{\xi_j}^\xi=-\mathscr{F}_{\xi_j}({u}^\xi)$. From these observations and
\begin{equation}
\vert N^\xi(w)\vert=\vert\int_0^1\int_0^1sW_{uuu}({u}^\xi+\tau sw)w\cdot wd\tau ds\vert\leq C\vert w\vert^2
\leq\frac{C}{\epsilon^\frac{1}{2}}\|w\|_{W_\eps^{1,2}}\vert w\vert,
\label{N-est}
\end{equation}
which implies
\[\vert\langle N^\xi(w),\frac{u_{\xi_j}^\xi}{\|u_{\xi_j}^\xi\|}\rangle\vert
\leq\frac{C}{\epsilon^\frac{1}{2}}\|w\|_{W_\eps^{1,2}}\|w\|,\]
it follows
\begin{equation}
\begin{split}
&\frac{1}{\|u_{\xi_j}^\xi\|^2}\langle\mathscr{F}({u}^\xi)-L^\xi w-N^\xi(w),u_{\xi_j}^\xi \rangle\\
&=\frac{\bar{c}_j(\xi)}{\|u_{\xi_j}^\xi\|}+\mathrm{O}(\|w\|_{W_\eps^{1,2}}
(\epsilon\|\mathscr{F}_{\xi_j}({u}^\xi)\|+\|w\|_{W_\eps^{1,2}})),
\end{split}
\label{maindotxi}
\end{equation}
where we have also used \eqref{uxixi}.
Therefore by solving \eqref{diaddotxi} we obtain
\begin{equation}
\begin{split}
&\dot{\xi}_j=\frac{\bar{c}_j(\xi)}{\|u_{\xi_j}^\xi\|}+\mathrm{O}(\|w\|_{W_\eps^{1,2}}
(\epsilon\|\mathscr{F}_{\xi_j}({u}^\xi)\|+\|w\|_{W_\eps^{1,2}}))\\
&+\mathrm{O}(e^{-\frac{\alpha\rho}{\epsilon}}
\sum_h(\vert\bar{c}_h(\xi)\vert+\|w\|_{W_\eps^{1,2}}\|\mathscr{F}_{\xi_h}({u}^\xi)\|)).
\label{xidot}
\end{split}
\end{equation}

From \eqref{xidot}, $\vert\bar{c}_h(\xi)\vert\leq\|\mathscr{F}(u^\xi)\|$, Lemma \ref{est-Fuxi}, and \eqref{w-bound} it follows
 \begin{equation}
 \vert\dot{\xi}\vert\leq C(\epsilon^\frac{1}{2}\|\mathscr{F}(u^\xi)\|+e^{-\frac{\alpha\rho}{\eps}}\|w\|)\leq C e^{-\frac{\alpha\rho}{\eps}}.
 \label{xdot-bound}
 \end{equation}

We now estimate the terms on the right hand side of \eqref{wLw-eq}. Using \eqref{xdot-bound} we have
\begin{equation}
\begin{split}
&\vert\int_0^1W_{uuu}({u}^\xi)\sum_{h=1}^N{u}_{\xi_h}^\xi\dot{\xi}_hw\cdot w)dx\vert\\
&\leq C_\eps e^{-\frac{\alpha\rho}{\eps}}\int_0^1\vert w\vert^2dx=  C_\eps e^{-\frac{\alpha\rho}{\eps}}\| w\|^2.
\end{split}
\label{first}
\end{equation}
From $\|w\|_{L^\infty}\leq\frac{C}{\epsilon^\frac{1}{2}}\|w\|_{W_\epsilon^{1,2}}$, \eqref{N-est} and \eqref{w-bound} we have
$\|N^\xi(w)\|\leq C\|w\|_{W_\epsilon^{1,2}}\|w\|\leq e^{-\frac{\alpha\rho}{\eps}}\|w\|$
which implies

\begin{equation}
\begin{split}
&\vert\langle{L}^\xi w,N^\xi(w)\rangle\vert\leq e^{-\frac{\alpha\rho}{2\eps}}\|{L}^\xi w\|\|w\|
\leq e^{-\frac{\alpha\rho}{2\eps}}(\|{L}^\xi w\|^2+\|w\|^2),\\
&\vert\langle{L}^\xi w,\mathscr{F}(u^\xi)\rangle\vert\leq\|{L}^\xi w\|\|\mathscr{F}(u^\xi)\|\leq\frac{1}{4}\|{L}^\xi w\|^2+\|\mathscr{F}(u^\xi)\|^2.
\end{split}
\label{second}
\end{equation}
 Finally we observe that Lemma \ref{est-Fuxi} and $L^\xi u_{\xi_h}=-\mathscr{F}_{\xi_h}(u^\xi)$ together with \eqref{xdot-bound} imply

\begin{equation}
\begin{split}
&\vert\langle{L}^\xi w,\sum_{h=1}^N{u}_{\xi_h}^\xi\dot{\xi}_h\rangle\vert
\leq\|w\|\sum_h\|\mathscr{F}_{\xi_h}(u^\xi)\|\vert\dot{\xi}_h\vert\\
&\leq C e^{-\frac{\alpha\rho}{\eps}}\|w\|(C\|\mathscr{F}(u^\xi)\|+e^{-\frac{\alpha\rho}{\eps}}\|w\|)
\leq C e^{-\frac{\alpha\rho}{\eps}}(\|\mathscr{F}(u^\xi)\|^2+\|w\|^2).
\end{split}
\label{third}
\end{equation}
 From these estimates and \eqref{wLw-eq} it follows

\begin{equation}
\frac{1}{2}\frac{d}{dt}\langle w,{L}^\xi w\rangle
\leq-\frac{1}{2}\|{L}^\xi w\|^2+ 2\|\mathscr{F}(u^\xi)\|^2+
C e^{-\frac{\alpha\rho}{\eps}}\|w\|^2.
\label{w-eq0}
\end{equation}

\begin{lemma}
\label{w-Lw-LwLw}
There exist positive constants $\nu$ and $D$ independent of $\xi\in\Xi_\rho$ and of $\epsilon>0$ such that
$w\perp\mathrm{span}(\frac{u_{\xi_1}^\xi}{\|u_{\xi_1}^\xi\|},\ldots,\frac{u_{\xi_N}^\xi}{\|u_{\xi_N}^\xi\|})$ implies
\begin{equation}
\begin{split}
&{\mu}^*\|w\|_{W_\eps^{1,2}}^2\leq\langle w,{L}^\xi w\rangle\leq D\|w\|_{W_\eps^{1,2}}^2,\\
&\langle w,{L}^\xi w\rangle\leq\nu\|{L}^\xi w\|^2,\\
&\|w\|\leq\nu\|{L}^\xi w\|.
\end{split}
\label{w-Lw-LwLw1}
\end{equation}
\end{lemma}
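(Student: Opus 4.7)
The plan is to treat the hypothesis $w\perp\mathrm{span}(u_{\xi_j}^\xi/\|u_{\xi_j}^\xi\|)$ as asserting that $w$ is \emph{almost} in $X^{\xi,\perp}$, since by Lemma \ref{gammasquare0} the true orthonormal basis $\{\varphi_j^\xi\}$ of $X^\xi$ differs from $\{u_{\xi_j}^\xi/\|u_{\xi_j}^\xi\|\}$ only by exponentially small vectors $\eta_j^\xi$. Once the lower bound in \eqref{w-Lw-LwLw1}$_1$ is in hand, the other two estimates drop out by Cauchy--Schwarz.

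For the lower bound I would decompose $w=Pw+(I-P)w$, where $P$ is the $L^2$-orthogonal projection onto $X^\xi$. Since $X^\xi$ is $L^\xi$-invariant, the cross term vanishes and
\[
\langle w,L^\xi w\rangle=\langle L^\xi Pw,Pw\rangle+\langle L^\xi(I-P)w,(I-P)w\rangle.
\]
The hypothesis gives $\langle w,\varphi_j^\xi\rangle=\langle w,\eta_j^\xi\rangle$, whence
\[
\|Pw\|\leq N\max_j\|\eta_j^\xi\|\,\|w\|=\mathrm{O}(e^{-\alpha\rho/\epsilon})\|w\|;
\]
the corresponding $W_\epsilon^{1,2}$ bound requires $\|\varphi_j^\xi\|_{W_\epsilon^{1,2}}\leq C$, which follows by testing $L^\xi\varphi_j^\xi=\lambda_j^\xi\varphi_j^\xi$ against $\varphi_j^\xi$ to get $\epsilon^2\|(\varphi_j^\xi)_x\|^2=\lambda_j^\xi-\langle W_{uu}(u^\xi)\varphi_j^\xi,\varphi_j^\xi\rangle$ and using $|\lambda_j^\xi|=\mathrm{O}(e^{-\alpha\rho/\epsilon})$ together with boundedness of $W_{uu}$. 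Combined with the estimate \eqref{Lbounds} of Proposition \ref{SpectrumL} applied to $(I-P)w\in X^{\xi,\perp}$, and with $|\langle L^\xi Pw,Pw\rangle|\leq Ce^{-\alpha\rho/\epsilon}\|Pw\|^2$ coming from the small-eigenvalue bound \eqref{EigenBounds}, this yields $\mu^*\|w\|_{W_\epsilon^{1,2}}^2\leq\langle w,L^\xi w\rangle$ after a harmless reduction of $\mu^*$.

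The upper bound is elementary: $\langle w,L^\xi w\rangle=\epsilon^2\|w_x\|^2+\int_0^1 W_{uu}(u^\xi)w\cdot w\,dx\leq D\|w\|_{W_\epsilon^{1,2}}^2$ with $D=1+\sup|W_{uu}|$. The bound \eqref{w-Lw-LwLw1}$_3$ follows from the chain
\[
\mu^*\|w\|^2\leq\mu^*\|w\|_{W_\epsilon^{1,2}}^2\leq\langle w,L^\xi w\rangle\leq\|w\|\,\|L^\xi w\|,
\]
giving $\|w\|\leq\|L^\xi w\|/\mu^*$; then \eqref{w-Lw-LwLw1}$_2$ is immediate from the same chain, $\langle w,L^\xi w\rangle\leq\|w\|\,\|L^\xi w\|\leq\|L^\xi w\|^2/\mu^*$. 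Both hold with $\nu=1/\mu^*$.

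The only delicate point is absorbing the $1/\epsilon$ factor in Lemma \ref{gammasquare0}'s bound $\|\eta_j^\xi\|\leq(C/\epsilon)\max_{h,\pm}\mathscr{E}_h^\pm$ into a slightly smaller exponential rate; since $\mathscr{E}_h^\pm\leq e^{-c\rho/\epsilon}$ for $\xi\in\Xi_\rho$, this is straightforward for $\epsilon$ small enough, so that all exponentially small perturbations can be absorbed without changing the structural form of the inequalities.
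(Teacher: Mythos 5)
Your argument is correct and follows essentially the paper's route: decompose $w$ into its $X^\xi$- and $X^{\xi,\perp}$-components, show the $X^\xi$-part is exponentially small in $W_\epsilon^{1,2}$ using Lemma \ref{gammasquare0}, and apply Proposition \ref{SpectrumL} on the complement, with the cross term vanishing by $L^\xi$-invariance of $X^\xi$. You introduce two minor streamlinings that the paper does not use but which are sound: you obtain $\|\varphi_j^\xi\|_{W_\epsilon^{1,2}}\leq C$ by testing the eigenvalue equation rather than computing $\|\varphi_{j,x}^\xi\|$ from the explicit form of $\varphi_j^\xi$, and you derive \eqref{w-Lw-LwLw1}$_{2,3}$ from \eqref{w-Lw-LwLw1}$_1$ plus Cauchy--Schwarz (with $\nu=1/\mu^*$) rather than the paper's separate spectral argument through $\lambda_{N+1}^\xi$; both yield the same structural conclusions with constants independent of $\epsilon$ and $\xi$.
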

\begin{proof}
We have $w=w^{\shortparallel}+w^\perp$ with $w^{\shortparallel}\in X^\xi$ and $w^\perp\in X^{\xi,\perp}$. From \eqref{decomposition} and Lemma \ref{gammasquare0} it follows
\begin{equation}
\begin{split}
&w^{\shortparallel}=\sum_j\langle w,\varphi_j^\xi\rangle\varphi_j^\xi=\sum_j\langle w,\eta_j^\xi\rangle\varphi_j^\xi,\\
&w_x^{\shortparallel}=\sum_j\langle w,\eta_j^\xi\rangle\varphi_{j,x}^\xi.
\end{split}
\label{wshort0}
\end{equation}
This, Lemma \ref{eta} and Lemma \ref{gammasquare0} that imply $\|\varphi_{j,x}^\xi\|=\mathrm{O}(\frac{C}{\epsilon^\frac{1}{2}})$, yields
\begin{equation}
\begin{split}
&\|w^{\shortparallel}\|\leq e^{-\frac{\alpha\rho}{\epsilon}}\|w\|\leq e^{-\frac{\alpha\rho}{\epsilon}}\|w\|_{W_\epsilon^{1,2}},\\
&\|w_x^{\shortparallel}\|\leq e^{-\frac{\alpha\rho}{\epsilon}}\|w\|\leq e^{-\frac{\alpha\rho}{\epsilon}}\|w\|_{W_\epsilon^{1,2}}.
\end{split}
\label{wshort}
\end{equation}
and in turn
\begin{equation}
\begin{split}
& (1-e^{-\frac{\alpha\rho}{\epsilon}})\|w\|\leq\|w^{\perp}\|\leq \|w\|,\\
& (1-e^{-\frac{\alpha\rho}{\epsilon}})\|w\|_{W_\epsilon^{1,2}}\leq\|w^{\perp}\|_{W_\epsilon^{1,2}}\leq
(1+e^{-\frac{\alpha\rho}{\epsilon}})\|w\|_{W_\epsilon^{1,2}}.
\end{split}
\label{wperp}
\end{equation}
Let  $\varphi_j^\xi$ and $\lambda_j^\xi$, $\lambda_j^\xi\leq\lambda_{j+1}^\xi$,  $j=1,\ldots$ the eigenvectors and the corresponding eigenvalues of ${L}^\xi$. Then Proposition \ref{SpectrumL} yields
\[\begin{split}
&(\lambda_{N+1}^\xi)^2\|w^\perp\|^2\leq\|L^\xi w^\perp\|^2\leq\|L^\xi w\|^2,\\
&\|L^\xi w^{\shortparallel}\|^2\leq e^{-\frac{\alpha\rho}{\epsilon}}\|w^{\shortparallel}\|^2
\end{split}\]
This and \eqref{wperp} imply \eqref{w-Lw-LwLw1}$_3$ with $\nu=\sup_{\xi\in\Xi_\rho}\frac{1}{2\lambda_{N+1}^\xi}$. The inequality \eqref{w-Lw-LwLw1}$_2$ follows from \eqref{w-Lw-LwLw1}$_3$. To complete the proof we note that Proposition \ref{SpectrumL} implies
\[\mu\|w^{\perp}\|_{W_\epsilon^{1,2}}^2+\langle w^{\shortparallel},L^\xi w^{\shortparallel}\rangle\leq
\langle w,L^\xi w\rangle.\]

The left inequality in \eqref{w-Lw-LwLw1}$_1$ follows from this \eqref{wperp} and $\vert\langle w^{\shortparallel},L^\xi w^{\shortparallel}\rangle\vert\leq e^{-\frac{\alpha\rho}{\epsilon}}\|w\|_{W_\epsilon^{1,2}}^2$. The right inequality from $\vert W_{uu}({u}^\xi)\vert\leq C$.
\end{proof}
From Lemma \ref{w-Lw-LwLw} and \eqref{w-eq0} it follows
\begin{equation}
\begin{split}
&\frac{1}{2}\frac{d}{dt}\langle w,{L}^\xi w\rangle
\leq-\frac{1}{2}\|{L}^\xi w\|^2+ 2\|\mathscr{F}(u^\xi)\|^2+
C_\eps e^{-\frac{\alpha\rho}{\eps}}\|w\|^2\\
&\leq-\frac{1}{4}\|{L}^\xi w\|^2+ 2\|\mathscr{F}(u^\xi)\|^2
\leq-\frac{1}{4\nu}\langle w,{L}^\xi w\rangle+ 2\|\mathscr{F}(u^\xi)\|^2.
\end{split}
\label{w-Eq}
\end{equation}
Which should be complemented by \eqref{xidot}.

\begin{lemma}
\label{w<c} There exist constants $Q>0$ and $\beta> 1$, independent of $\xi\in\Xi_\rho$ and $\epsilon\in(0,\epsilon_0]$, such that the condition
\[\|w_0\|_{W_\epsilon^{1,2}}\leq Q\|\mathscr{F}(u^{\xi_0})\|,\]
as long as $\xi(t)\in\Xi_\rho$, implies
\begin{equation}
\|w(t)\|_{W_\epsilon^{1,2}}\leq\beta Q\|\mathscr{F}(u^{\xi(t)})\|.
\label{w<Cc}
\end{equation}
\end{lemma}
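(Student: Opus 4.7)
The starting point is the differential inequality \eqref{w-Eq}. I would introduce the abbreviations
\[
\Phi(t):=\langle w(t),L^{\xi(t)}w(t)\rangle,\qquad m(t):=\|\mathscr{F}(u^{\xi(t)})\|^2,
\]
so that \eqref{w-Eq} reads $\dot\Phi\le -\frac{1}{2\nu}\Phi+4m$, while Lemma~\ref{w-Lw-LwLw} supplies the two-sided bound $\mu^*\|w\|_{W_\eps^{1,2}}^2\le\Phi\le D\|w\|_{W_\eps^{1,2}}^2$. The hypothesis $\|w_0\|_{W_\eps^{1,2}}\le Q\|\mathscr{F}(u^{\xi_0})\|$ translates into $\Phi(0)\le DQ^2m(0)$.

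The natural object to track is the ratio $\psi(t):=\Phi(t)/m(t)$, for which the differential inequality above together with $\dot\psi=\dot\Phi/m-\psi\,\dot m/m$ gives
\[
\dot\psi\le -\psi\!\left(\frac{1}{2\nu}+\frac{\dot m}{m}\right)+4.
\]
Thus, provided $|\dot m/m|\le 1/(4\nu)$, one gets $\dot\psi\le -\psi/(4\nu)+4$, and Gronwall's lemma yields the clean bound $\psi(t)\le\max(\psi(0),16\nu)\le\max(DQ^2,16\nu)$. Picking $Q$ so that $DQ^2\ge 16\nu$ (for instance $Q^2=16\nu/\mu^*$) then gives $\psi(t)\le DQ^2$, whence $\mu^*\|w(t)\|_{W_\eps^{1,2}}^2\le\Phi(t)\le DQ^2m(t)$, i.e.\ $\|w(t)\|_{W_\eps^{1,2}}\le\sqrt{D/\mu^*}\,Q\,\|\mathscr{F}(u^{\xi(t)})\|$. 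Choosing any $\beta>\sqrt{D/\mu^*}$ (which is $\ge 1$ automatically, and can be taken $>1$) delivers the desired estimate \eqref{w<Cc}.

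The control of $|\dot m|/m$ is obtained via a continuity/bootstrap argument. Define
\[
T^\#:=\sup\{t\in[0,T):\|w(s)\|_{W_\eps^{1,2}}<2\beta Q\|\mathscr{F}(u^{\xi(s)})\|\ \text{for all}\ s\in[0,t]\},
\]
which is strictly positive by continuity and the initial hypothesis. Under this bootstrap hypothesis, I analyze each term of \eqref{xidot}: the principal term $\bar c_j/\|u_{\xi_j}^\xi\|$ is $O(\epsilon^{1/2}\|\mathscr{F}\|)$ thanks to $|\bar c_j|\le\|\mathscr{F}\|$ and $\|u_{\xi_j}^\xi\|\sim\epsilon^{-1/2}$ (see \eqref{uxixi}); the quadratic-in-$w$ correction is bounded by $C(\beta Q)^2\|\mathscr{F}\|^2$, hence negligible compared to $\epsilon^{1/2}\|\mathscr{F}\|$ because $\|\mathscr{F}\|$ is itself exponentially small; the other corrections carry an extra factor $e^{-\alpha\rho/\epsilon}$. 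Altogether $|\dot\xi_h|\le C\epsilon^{1/2}\|\mathscr{F}(u^\xi)\|$. Combining this with $\|\mathscr{F}_{\xi_h}(u^\xi)\|\le(C/\epsilon)\max_{h,\pm}\mathscr{E}_h^\pm\le(C/\epsilon)e^{-\alpha\rho/\epsilon}$ from Lemma~\ref{est-Fuxi}, the factor $\|\mathscr{F}\|$ in $|\dot\xi_h|$ cancels one copy of $\|\mathscr{F}\|$ in the denominator $m=\|\mathscr{F}\|^2$, giving
\[
\frac{|\dot m|}{m}\le\frac{2\sum_h\|\mathscr{F}_{\xi_h}\|\,|\dot\xi_h|}{\|\mathscr{F}\|}\le\frac{C}{\epsilon^{1/2}}\,e^{-\alpha\rho/\epsilon},
\]
which is $\le 1/(4\nu)$ for all $\epsilon\le\epsilon_0$ once $\epsilon_0$ is small enough. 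Feeding this back into the Gronwall estimate shows $\|w(t)\|_{W_\eps^{1,2}}<\beta Q\|\mathscr{F}(u^{\xi(t)})\|<2\beta Q\|\mathscr{F}(u^{\xi(t)})\|$ strictly on $[0,T^\#]$; continuity then forces $T^\#=T$, closing the bootstrap.

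The main technical obstacle is exactly the smallness of $|\dot m|/m$: a naive bound is useless because $\|\mathscr{F}(u^\xi)\|$ itself is exponentially small, so a priori $|\dot m|/m$ could be large. The resolution exploits the fact that the velocity $|\dot\xi|$ is proportional to $\|\mathscr{F}\|$ (so one factor cancels), and then relies on the uniform exponential smallness of $\|\mathscr{F}_{\xi_h}\|$ on $\Xi_\rho$ provided by Lemma~\ref{est-Fuxi}; this is where the definition of $\Xi_\rho$ (ensuring $\xi_j-\xi_{j-1}>\rho/\mu_j$) enters decisively.
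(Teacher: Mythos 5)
Your argument is correct, and it rests on the same pillars as the paper's proof: the differential inequality \eqref{w-Eq} for $\Phi(t)=\langle w,L^{\xi}w\rangle$, the two-sided comparison of Lemma~\ref{w-Lw-LwLw}, a bootstrap hypothesis used to bound $|\dot\xi|$ by $C\eps^{1/2}\|\mathscr{F}(u^\xi)\|$, and the uniform exponential smallness of $\|\mathscr{F}_{\xi_h}(u^\xi)\|$ on $\Xi_\rho$ from Lemma~\ref{est-Fuxi}. Where you diverge from the paper is in how the slow drift of $\|\mathscr{F}(u^{\xi(t)})\|$ is fed into the Gronwall step. The paper works on intervals $[k\bar t,(k+1)\bar t]$ with $\bar t=2\nu\ln(4D/\mu^*)$ tuned precisely so that the multiplicative change of $\|\mathscr{F}\|$ over $[0,\bar t]$ (estimate \eqref{c-and-c0}) is $1+\mathrm{O}(e^{-\alpha\rho/\eps}\bar t)$, so that the inhomogeneous Gronwall bound drives $\|w\|_{W_\eps^{1,2}}$ from $\beta Q\|\mathscr{F}\|$ back down to $Q\|\mathscr{F}\|$ at the end of each interval (\eqref{betaQ}, \eqref{Q}), allowing iteration. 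You instead form the ratio $\psi=\Phi/m$ with $m=\|\mathscr{F}(u^{\xi(t)})\|^2$ and observe that it obeys a single time-uniform linear ODE $\dot\psi\le-\psi/(4\nu)+4$ once $|\dot m/m|\le 1/(4\nu)$, delivering the bound $\psi\le\max(\psi(0),16\nu)$ in one pass without any interval bookkeeping or tuning of $\bar t$. The two devices exploit the same cancellation — one factor of $\|\mathscr{F}\|$ in $|\dot\xi|$ cancels against one in the denominator of $\dot m/m$ — and yield the same exponential smallness $|\dot m/m|\lesssim\eps^{-1/2}e^{-\alpha\rho/\eps}$; the constants $Q$, $\beta$ you produce differ numerically from the paper's $Q=4\sqrt{2\nu/\mu^*}$, $\beta=\sqrt{2D/\mu^*+1/2}$, but both choices satisfy the required constraints. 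Your ratio formulation is arguably the tidier packaging of the argument.
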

\begin{proof} Set $Q=4\sqrt{\frac{2\nu}{\mu^*}}$, $\beta=\sqrt{\frac{2D}{\mu^*}+\frac{1}{2}}$ and
 $\bar{t}=2\nu\ln{\frac{4D}{\mu^*}}$ and observe that \eqref{w-Lw-LwLw1} implies $\beta>1$. From this and the condition $\|w_0\|_{W_\eps^{1,2}}\leq Q\|\mathscr{F}(u^{\xi_0})\|$ it follows
 \begin{equation}
 \|w(t)\|_{W_\eps^{1,2}}\leq\beta Q\|\mathscr{F}(u^{\xi(t)})\|,\;\;t\in[0,\min\{t^*,\bar{t}\}],
 \label{w<Cc1}
 \end{equation}
 for some $t^*>0$.This and \eqref{xdot-bound} imply
 \[\vert\dot{\xi}_h\vert\leq C\epsilon^\frac{1}{2}\|\mathscr{F}(u^{\xi(t)})\|,\;\;t\in[0,\min\{t^*,\bar{t}\}],\]
 and therefore, from  \eqref{scrF}$_2$ we obtain
 \[\begin{split}
 &\vert\frac{d}{dt}\|\mathscr{F}(u^{\xi(t)})\|\vert\leq\sum_{h=1}^N\|\mathscr{F}_{\xi_h}(u^{\xi(t)})\|\vert \dot{\xi}_h(t)\vert\\
 &\leq e^{-\frac{\alpha\rho}{\eps}}\|\mathscr{F}(u^{\xi(t)})\|,\;\;t\in[0,\min\{t^*,\bar{t}\}].
 \end{split}\]
 It follows
 \begin{equation}
 (1-e^{-\frac{\alpha\rho}{\eps}}\bar{t})\|\mathscr{F}(u^{\xi_0})\|\leq\|\mathscr{F}(u^{\xi(t)})\|
 \leq(1+2e^{-\frac{\alpha\rho}{\eps}}\bar{t})\|\mathscr{F}(u^{\xi_0})\|,\;\;t\in[0,\min\{t^*,\bar{t}\}].
 \label{c-and-c0}
 \end{equation}
This and \eqref{w-Eq} yield
\[\begin{split}
&\frac{d}{dt}\langle w,{L}^\xi w\rangle+\frac{1}{2\nu}\langle w,{L}^\xi w\rangle\leq 4(1+2e^{-\frac{\alpha\rho}{\eps}}\bar{t})^2\|\mathscr{F}(u^{\xi_0})\|^2
,\;\;t\in[0,\min\{t^*,\bar{t}\}],
\end{split}\]
which implies
\[\langle w,{L}^\xi w\rangle\leq\langle w_0,{L}^\xi w_0\rangle e^{-\frac{t}{2\nu}}
+8\nu(1-e^{-\frac{t}{2\nu}})(1+2e^{-\frac{\alpha\rho}{\eps}}\bar{t})^2\|\mathscr{F}(u^{\xi_0})\|^2,\;\;t\in[0,\min\{t^*,\bar{t}\}].\]
This and Lemma \ref{w-Lw-LwLw} imply

\begin{equation}
\begin{split}
&{\mu^*}\|w(t)\|_{W_\eps^{1,2}}^2\leq\langle w,{L}^\xi w\rangle\leq e^{-\frac{t}{2\nu}}D\|w_0\|_{W_\eps^{1,2}}^2+8\nu(1+2e^{-\frac{\alpha\rho}{\eps}}\bar{t})^2\|\mathscr{F}(u^{\xi_0})\|^2\\
&\Rightarrow\quad\|w(t)\|_{W_\eps^{1,2}}^2\leq\frac{1}{\mu^*} \Big(e^{-\frac{t}{2\nu}}DQ^2+
8\nu(1+2e^{-\frac{\alpha\rho}{\eps}}\bar{t})^2\Big)\|\mathscr{F}(u^{\xi_0})\|^2\\
&\leq\frac{e^{-\frac{t}{2\nu}}DQ^2+
8\nu(1+2e^{-\frac{\alpha\rho}{\eps}}\bar{t})^2}
{\mu^*(1-e^{-\frac{\alpha\rho}{\eps}}\bar{t})^2}\|\mathscr{F}(u^{\xi(t)})\|^2,
\;\;t\in[0,\min\{t^*,\bar{t}\}],
\end{split}
\label{w-Eq2}
\end{equation}
where we have also used \eqref{c-and-c0}.
Assume that $\epsilon>0$ is so small that $\frac{(1+2e^{-\frac{\alpha\rho}{\eps}}\bar{t})^2}{(1-e^{-\frac{\alpha\rho}{\eps}}\bar{t})^2}\leq 2$, then \eqref{w-Eq2} implies
\begin{equation}\|w(t)\|_{W_\eps^{1,2}}^2
\leq\frac{2DQ^2+16\nu}{\mu^*}
\|\mathscr{F}(u^{\xi(t)})\|^2
=\beta^2Q^2\|\mathscr{F}(u^{\xi(t)})\|^2,\;\;t\in[0,\min\{t^*,\bar{t}\}]
\label{betaQ}
\end{equation}
and
\begin{equation}\|w(\bar{t})\|_{W_\eps^{1,2}}^2
\leq(2\frac{e^{-\frac{\bar{t}}{2\nu}}}{\mu^*}DQ^2+\frac{16\nu}{\mu^*})
\|\mathscr{F}(u^{\xi(\bar{t})})\|^2=Q^2\|\mathscr{F}(u^{\xi(\bar{t})})\|^2.
\label{Q}
\end{equation}
From \eqref{betaQ}
It follows that $\min\{t^*,\bar{t}\}=\bar{t}$ and therefore that \eqref{w-Eq2} is valid for $t\in[0,\bar{t}]$.
This and \eqref{Q} imply that we can repeat what we have said above for $t\in[0,\bar{t}]$ for the interval $[\bar{t},2\bar{t}]$ and so on as long as $\xi(t)\in\Xi_\rho$. This concludes the proof.
\end{proof}

We are now in the position of completing the proof of Theorem \ref{dynamic}. Indeed \eqref{dotxi=c} follows by inserting the estimate \eqref{w<Cc} for $w$ into \eqref{xidot}, by observing that Lemma \ref{est-Fuxi} implies
\[\begin{split}
&\|\mathscr{F}(u^{\xi})\|\leq Ce^{-\frac{\alpha\rho}{\eps}}\max_{h,\pm}{\mathscr{E}_h^\pm}\frac{1}{2},\\
&\|\mathscr{F}_{\xi_j}(u^{\xi})\|\leq\frac{C}{\epsilon}\max_{h,\pm}{\mathscr{E}_h^\pm},\;\;j=1,\ldots,N.
\end{split},\]
and by recalling \eqref{uxixi}.
This conclude the proof of the theorem.

 %%%%%%%%%%%%%%%%%%%%%%%%%%%%%%%%%%%%%%%%%%%%%%%%%%%%%%%%%%%%%%%%%%%%%%%%%%%%%%%%%%%%%%%%%%%%%%%%%%%%%%%%%%%%%%%%%%%%%%%%

\subsection{The proof of Lemma \ref{lemmaL2}.}
\label{Proflemma}

\begin{proof}
1. For $\xi\in\Xi_\rho$, we have
\begin{equation}
\begin{split}
&u^\xi(\xi_j)=\bar{u}_j(0)+\mathrm{O}(e^{-\frac{\alpha\rho}{\epsilon}}),\\
&u^\xi(\hat{\xi}_{j-1})=a_j+\mathrm{O}(e^{-\frac{\alpha\rho}{\epsilon}}),\;\;(\hat{\xi}_h=\frac{\xi_{h+1}+\xi_h}{2}),\\
&u^\xi(\hat{\xi}_j)=a_{j+1}+\mathrm{O}(e^{-\frac{\alpha\rho}{\epsilon}}).
\end{split}
\label{uxi0}
\end{equation}
This follows from \eqref{uxi1},\eqref{generic} and Lemma \ref{est-uxi}.

2. Assume that there is $1\leq j\leq N$ such that $\xi,\zeta\in\Xi_\rho$ satisfy
\begin{equation}
\zeta_h\notin(\hat{\xi}_{j-1},\hat{\xi}_j),\;\;h\neq j,\;h=1,\ldots,N,
\label{not-in}
\end{equation}
then there is $\bar{m}_1>0$ independent of $\epsilon>0$ such that
\begin{equation}
\|u^\zeta-u^{\xi}\|_{L^\infty}\geq \bar{m}_1.
\label{Linfty-diff}
\end{equation}
To prove this we note that \eqref{not-in} implies $u^\zeta(\xi_j)=a_h+\mathrm{O}(e^{-\frac{\alpha\rho}{\epsilon}})$ for some $1\leq h\leq N$ that together with \eqref{uxi0} and
\begin{equation}
\{\bar{u}_i(s): s\in\R\}\cap A=\emptyset,\;\;\;(A=\{W=0\}).
\label{us-aj}
\end{equation}
that follows from the minimality of $\bar{u}_i$, $i=1,\ldots,N$,
imply $u^\zeta(\xi_j)-u^\xi(\xi_j)
=a_h-\bar{u}_j(0)+\mathrm{O}(e^{-\frac{\alpha\rho}{\epsilon}})\neq 0$ where we have assumed $\epsilon>0$ sufficiently small. This implies \eqref{Linfty-diff} with $\bar{m}_1=\frac{1}{2}\vert a_h-\bar{u}_j(0)\vert$.

3. Assume now that $\xi,\zeta$ are such that there is no $j$ that satisfies \eqref{not-in}. This implies that, for each $j$ there is one and exactly one $\zeta_{h_j}\in(\hat{\xi}_{j-1},\hat{\xi}_j)$. There are three different possibilities
\begin{description}
\item[a)]there is $j$ such that $a_{h_j}\neq a_j$ and $a_{{h_j}+1}\neq a_{j+1}$.
\item[b)]there is $j$ such that either $a_{h_j}=a_j$ and $a_{{h_j}+1}\neq a_{j+1}$
 or $a_{h_j}\neq a_j$ and $a_{{h_j}+1}= a_{j+1}$.
\item[c)] $a_{h_j}=a_j$ and $a_{{h_j}+1}= a_{j+1}$ for all $j$.
\end{description}
In case $\mathbf{a)}$, for small $\epsilon>0$, we have
\[\begin{split}
&\zeta_{h_j}\leq\xi_j\;\;\Rightarrow u^\zeta(\xi_j+{d})-u^\xi(\xi_j+{d})
=a_{{h_j}+1}-a_{j+1}+\mathrm{O}(e^{-\frac{\alpha\rho}{\epsilon}})\neq 0,\\
&\zeta_{h_j}\geq\xi_j\;\;\Rightarrow u^\zeta(\xi_j-{d})-u^\xi(\xi_j-{d})
=a_{h_j}-a_{j}+\mathrm{O}(e^{-\frac{\alpha\rho}{\epsilon}})\neq 0,
 \end{split}\]
 where ${d}=\frac{\rho}{4\max_i \mu_i}$.
 It follows that \eqref{Linfty-diff} holds also in this case with $\bar{m}_1=\frac{1}{2}(\vert a_{h_j+1}-a{j+1}\vert+\vert a_{h_j}-a{j}\vert)$.

Case $\mathbf{b)}$. We only discuss the case $a_{h_j}=a_j$, the argument for the case $a_{{h_j}+1}=a_{j+1}$ is analogous. If $\zeta_{h_j}\leq\xi_j$
arguing as in $\mathbf{a)}$ we again obtain \eqref{Linfty-diff}. Therefore we assume $\zeta_{h_j}>\xi_j$. We observe that the minimality of $\bar{u}_{h_j}$ and $\bar{u}_j$ together with $a_{h_j}=a_j$ and $a_{{h_j}+1}\neq a_{j+1}$ imply the existence of $\delta>0$ such that
\[\vert\bar{u}_{h_j}(s)-a_{j+1}\vert\geq\delta,\;\;s\in\R.\]
it follows, provided $\epsilon>0$ is small
\[\vert u^\zeta(\xi_j+{d})-a_{j+1}-(u^\xi(\xi_j+{d})-a_{j+1})\vert
=\vert u^\zeta(\xi_j+{d})-a_{j+1}\vert+\mathrm{O}(e^{-\frac{\alpha\rho}{\epsilon}})
\geq\frac{\delta}{2}\]
and we conclude that \eqref{Linfty-diff} holds also in case $\mathbf{b)}$ with $\bar{m}_1=\frac{\delta}{2}$.

Case $\mathbf{c)}$. In this case we necessarily have $h_j=j$, $1\leq j\leq N$ and in turn
$\bar{u}_{h_j}=\bar{u}_j$. The minimality of $\bar{u}_j$ implies
\begin{equation}
\bar{u}_j(s)\neq\bar{u}_j(s+\tau),\;\;s\in\R,\,\tau\neq 0,
\label{no-loop}
\end{equation}
and \eqref{us-aj} yields $\bar{u}_j(0)\notin\{a_j,a_{j+1}\}$. It follows that, given $\omega>0$, there is   $\bar{m}_1(\omega)>0$ such that
\[\vert\bar{u}_j(s)-\bar{u}_j(0)\vert \geq\bar{m}_1(\omega),\;\;\vert s\vert\geq\omega.\]
This and Lemma \ref{est-uxi} yield
\[\vert\zeta_j-\xi_j\vert\geq\epsilon\omega\;\;\Rightarrow\vert u^\zeta(\xi_j)-u^\xi(\xi_j)\vert\geq \bar{m}_1(\omega)+\mathrm{O}(e^{-\frac{\alpha\rho}{\epsilon}}).\]
From 1, 2, 3 we conclude that
\[\vert\zeta-\xi\vert\geq\epsilon\omega\;\;\Rightarrow\;\;\|u^\zeta-u^\xi\|_{L^\infty}\geq \frac{1}{2}\bar{m}_1(\omega).\]
That establishes \eqref{Linftycase}$_1$ with $m_1(\omega)=\frac{1}{2}\bar{m}_1(\omega)$ and $\omega_1$ such that
$m_1(\omega)\leq\bar{m}_1$ for $\omega\in(0,\omega_1]$.
This and $\vert u_x^\xi(x)\vert\leq\frac{C}{\epsilon}$ that follows from \eqref{uxixi}$_3$ imply
\[\vert\zeta-\xi\vert\geq\epsilon\omega\;\;\Rightarrow\;\;\|u^\zeta-u^\xi\|\geq C{(m_1(\omega))}^\frac{3}{2}\epsilon^\frac{1}{2},\]
that yields \eqref{L2case}$_1$.

4. Proof of \eqref{L2case}$_2$. Fix a number $\omega_0>0$. Then, for $\vert\zeta-\xi\vert\leq\omega_0\epsilon$, the estimates in Lemma \ref{est-uxi} apply both to $u^\xi$ and $u^\zeta$ and we have

\begin{equation}
\begin{split}
&u^\zeta(x)-u^\xi(x)=\int_0^1u_\xi^{\xi+t(\zeta-\xi)}dt\cdot(\zeta-\xi)\\
&=-\frac{1}{\epsilon}\sum_j\int_0^1\bar{u}_j^\prime(\frac{x-\xi_j-t(\zeta_j-\xi_j)}{\epsilon})
(\zeta_j-\xi_j)dt+\sum_j\mathrm{O}(e^{-\frac{\alpha\rho}{\epsilon}})(\zeta_j-\xi_j).
\end{split}
\label{uzeta-uxi0}
\end{equation}
 Note that
\[\bar{u}_j^\prime(\frac{x-\xi_j-t(\zeta_j-\xi_j)}{\epsilon})
=\bar{u}_j^\prime(\frac{x-\xi_j}{\epsilon})
-\frac{t}{\epsilon}\int_0^1\bar{u}_j^{\prime\prime}(\frac{x-\xi_j-\tau t(\zeta_j-\xi_j)}{\epsilon})d\tau(\zeta_j-\xi_j)\]
implies
\begin{equation}\begin{split}
&u^\zeta(x)-u^\xi(x)
=-\frac{1}{\epsilon}\sum_j\bar{u}_j^\prime(\frac{x-\xi_j}{\epsilon})(\zeta_j-\xi_j)\\
&+\int_0^1\frac{t}{\epsilon^2}\sum_j\int_0^1\bar{u}_j^{\prime\prime}(\frac{x-\xi_j-\tau t(\zeta_j-\xi_j)}{\epsilon})dtd\tau(\zeta_j-\xi_j)^2
+\sum_j\mathrm{O}(e^{-\frac{\alpha\rho}{\epsilon}})(\zeta_j-\xi_j)
\end{split}
\label{uzeta-uxi}
\end{equation}
Arguing as in the proof of Lemma \ref{est-uxi} we find
\begin{equation}
\frac{1}{\epsilon^2}\int_0^1\bar{u}_j^\prime(\frac{x-\xi_j}{\epsilon})
\cdot\bar{u}_i^\prime(\frac{x-\xi_i}{\epsilon})dx=\frac{\delta_{ij}}{\epsilon}\bar{q}_j^2
+\mathrm{O}(e^{-\frac{\alpha\rho}{\epsilon}}),
\label{ui.ujprime}
\end{equation}
where $\bar{q}_j^2=\int_\R\vert \bar{u}_j(s)\vert^2ds$.
 From \eqref{ui.ujprime} we obtain
 \begin{equation}
 \begin{split}
 &\|\frac{1}{\epsilon}\sum_j\bar{u}_j^\prime(\frac{x-\xi_j}{\epsilon})(\zeta_j-\xi_j)\|\geq
(\frac{\bar{q}_m}{\epsilon^\frac{1}{2}}+\mathrm{O}(e^{-\frac{\alpha\rho}{\epsilon}}))\vert\zeta-\xi\vert,\\
&\|\frac{1}{\epsilon}\sum_j\bar{u}_j^\prime(\frac{x-\xi_j}{\epsilon})(\zeta_j-\xi_j)\|
\leq(\frac{\bar{q}_M}{\epsilon^\frac{1}{2}}+\mathrm{O}(e^{-\frac{\alpha\rho}{\epsilon}}))\vert\zeta-\xi\vert,
\end{split}
\label{uzeta-uxi11}
\end{equation}
where $\bar{q}_m$ and $\bar{q}_M$ are respectively the minimum and the maximum of the $\bar{q}_j$.

From $\vert x-\xi_j-\tau t(\zeta_j-\xi_j)\vert\geq\vert x-\xi_j\vert-\vert\zeta_j-\xi_j\vert$, \eqref{generic} and the assumption $\vert\zeta-\xi\vert\leq\omega_0\epsilon$ it follows
\begin{equation}
\begin{split}
&\vert\bar{u}_j^{\prime\prime}(\frac{x-\xi_j-\tau t(\zeta_j-\xi_j)}{\epsilon})\vert
\leq Ce^{-\frac{\mu_j}{\epsilon}\vert x-\xi_j\vert}e^{\mu_j\omega_0}
\leq Ce^{-\frac{\mu_j}{\epsilon}\vert x-\xi_j\vert},
\end{split}
\label{D2uh}
\end{equation}

\noindent Therefore if $I(x)=\sum_jI_j(x)$ is the integral term in \eqref{uzeta-uxi} we have
\[\begin{split}
&\vert\zeta-\xi\vert\leq\omega_0\epsilon\quad\Rightarrow\\
&\|I_j\|^2\leq\frac{1}{\epsilon^4}C\int_0^1e^{-\frac{2\mu_j}{\epsilon}\vert x-\xi_j\vert}dx(\zeta_j-\xi_j)^4\\
&\leq\frac{C}{\epsilon^3}(\zeta_j-\xi_j)^4
\leq\frac{C\omega_0^2}{\epsilon}(\zeta_j-\xi_j)^2\\
&\|I\|\leq\sum_j\|I_j\|\leq\frac{C\omega_0}{\epsilon^\frac{1}{2}}\vert\zeta-\xi\vert.
\end{split}\]
This \eqref{uzeta-uxi} and \eqref{uzeta-uxi11} yield
\begin{equation}
\begin{split}
&\vert\zeta-\xi\vert\leq\omega_0\epsilon\quad\Rightarrow\\
&\|u^\zeta-u^\xi\|\geq\frac{1}{\epsilon^\frac{1}{2}}
(\bar{q}_m-C\omega+\mathrm{O}(e^{-\frac{\alpha\rho}{\epsilon}})\vert\zeta-\xi\vert
,\\
&\|u^\zeta-u^\xi\|\leq\frac{1}{\epsilon^\frac{1}{2}}
(\bar{q}_M +C\omega+\mathrm{O}e^{-\frac{\alpha\rho}{\epsilon}})\vert\zeta-\xi\vert
,
\end{split}
\label{caseL2}
\end{equation}
and therefore, for $\omega_0=\frac{\bar{q}_m}{4C}$,
we have
\[\begin{split}
&\vert\zeta-\xi\vert\leq\omega_0\epsilon\quad\Rightarrow\\
&\frac{\bar{q}_m}{2\epsilon^\frac{1}{2}}\vert\zeta-\xi\vert\leq\|u^\zeta-u^\xi\|
\leq\frac{2\bar{q}_M}{\epsilon^\frac{1}{2}}\vert\zeta-\xi\vert.
\end{split}\]
 This concludes the proof of \eqref{L2case}$_2$.
The proof of \eqref{uxix-est} is similar.

5. Proof of \eqref{Linftycase}$_2$. The right inequality follows immediately from \eqref{uzeta-uxi0} and the boundedness of $\bar{u}_j^\prime$, $j=1,\ldots,N$. To prove the other inequality fix a number $s_a>0$ that satisfies
\[\vert\bar{u}_i(s_a)-\bar{u}_i(-s_a)\vert\geq\frac{1}{2}\min_j\vert a_{j+1}-a_j\vert,\;\;i=1,\ldots,N.\]
Then there is $s_i\in[-s_a,s_a]$ such that
 \[\vert\bar{u}_i^\prime(s_i)\vert\geq k_a=
\frac{1}{4s_a}\min_j\vert a_{j+1}-a_j\vert.\]
Assume that $h$ satisfies $\vert\zeta_h-\xi_h\vert=\max_j\vert\zeta_j-\xi_j\vert$ and note that, for $x=\xi_h+\epsilon s_h$ we have
\begin{equation}
\begin{split}
&\vert\sum_j\bar{u}_j^\prime(\frac{\xi_h-\xi_j+\epsilon s_h}{\epsilon})(\zeta_j-\xi_j)\vert\\
&\geq (k_a
-\sum_{j\neq h}\vert\bar{u}_j^\prime(\frac{\xi_h-\xi_j+\epsilon s_h}{\epsilon})\vert)\vert\zeta_h-\xi_h\vert
\geq\frac{k_a}{2N^\frac{1}{2}}\vert\zeta-\xi\vert,
\end{split}
\label{uzeta-uxi1}
\end{equation}
where we have observed that $\vert\bar{u}_j^\prime(\frac{\xi_h-\xi_j+\epsilon s_h}{\epsilon})\vert
=\mathrm{O}(e^{-\frac{\alpha\rho}{\epsilon}})$, if $j\neq h$.
On the other hand the boundedness of $\bar{u}_j^{\prime\prime}(s)$  implies
\[\begin{split}
&\vert I_j(\xi_h+\epsilon s_h)\vert\leq \frac{C}{\epsilon^2}\vert\zeta_j-\xi_j\vert^2\quad\Rightarrow\\
&\vert I(\xi_h+\epsilon s_h)\vert\leq \frac{C}{\epsilon^2}\vert\zeta-\xi\vert^2\leq\frac{k_a}{4\epsilon N^\frac{1}{2}}\vert\zeta-\xi\vert,
\end{split}\]
where we have used the hypothesis $\vert\zeta-\xi\vert\leq\epsilon\omega_1$ and assumed
$\omega_1\leq\frac{k_0}{4N^\frac{1}{2}}$. This ,\eqref{uzeta-uxi}, \eqref{uzeta-uxi1} and a proper definition of $C_\pm^1$ conclude the proof of the estimate \eqref{Linftycase}$_2$.
\end{proof}

 %%%%%%%%%%%%%%%%%%%%%%%%%%%%%%%%%%%%%%%%%%%%%%%%%%%%%%%%%%%%%%%%%%%%%%%%%%%%%%%%%%%%%%%%%%%%%%%%%%%%%%%%%%%%%%%%%%%%%%%%%%%%

\section{On the asymptotic behavior of $\bar{u}$.}\label{B}

\begin{lemma}
\label{w}
Assume that $W:\R^m\rightarrow\R$ satisfies ${\mathbf{H}}_2$ and $\bar{u}:(0,+\infty)\rightarrow\R^m$ is a solution of \eqref{newton} that converges to some $a\in A$:  $\lim_{s\rightarrow+\infty}\bar{u}(s)=a$ . Then there exists
\[\lim_{s\rightarrow+\infty}\frac{\bar{u}(s)-a}{\vert\bar{u}(s)-a\vert}=z,\]
and $z$ is an eigenvector of the matrix $W_{uu}(a)$. Moreover 
there are positive constants $\bar{K}, K^+$ and $\mu^+>\mu$  ($\mu^2$ the eigenvalue of  $W_{uu}(a)$ associated to $z$)  and a smooth map $w:(0,+\infty)\rightarrow\R^m$ such that
\[\begin{split}
&\bar{u}(s)-a=\bar{K}ze^{-\mu s}+w(s),\;\;s>0,\\
&\vert w(s)\vert,\,\vert D^kw(s)\vert\leq K^+e^{-\mu^+ s},\;\;s>0,\;\;k=1,2,3
\end{split}\]
A similar statements applies when $\bar{u}$ converges to $a$ as $s\rightarrow-\infty$.
\end{lemma}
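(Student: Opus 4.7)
The plan is to treat \eqref{newton} as the first-order system $Y' = F(Y)$ with $Y = (u,u')^\top$ and $F(u,p) = (p,W_u(u))$, for which $Y_* = (a,0)$ is an equilibrium with linearization $DF(Y_*) = \bigl(\begin{smallmatrix}0 & I \\ W_{uu}(a) & 0\end{smallmatrix}\bigr)$. Under $\mathbf{H}_2$ the matrix $W_{uu}(a)$ has $m$ distinct positive eigenvalues $\mu_1^2 < \cdots < \mu_m^2$, so $DF(Y_*)$ has $2m$ simple eigenvalues $\pm\mu_j$; in particular $Y_*$ is hyperbolic with an $m$-dimensional smooth stable manifold $W^s$. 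Since $\bar u(s) \to a$, the orbit $(\bar u(s), \dot{\bar u}(s))$ lies on $W^s$ for large $s$, and the stable-manifold theorem yields $|\bar u(s)-a| + |\dot{\bar u}(s)| \le C e^{-\mu_1 s}$. Higher-derivative bounds $|D^k \bar u| \le C e^{-\mu_1 s}$ for $k=1,2,3$ follow by bootstrapping through $\ddot{\bar u} = W_u(\bar u)$.

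With this crude decay I would expand $v(s) := \bar u(s)-a = \sum_j \alpha_j(s) z_j$ in an orthonormal eigenbasis $\{z_j\}$ of $W_{uu}(a)$ (orthogonality is automatic by $\mathbf{H}_2$). Each scalar $\alpha_j$ solves $\ddot\alpha_j - \mu_j^2\alpha_j = g_j$ with $g_j(s) = z_j\cdot\bigl(W_u(a+v)-W_{uu}(a)v\bigr) = O(|v(s)|^2)$. Diagonalizing the linear part by $y_j^\pm := \dot\alpha_j \pm \mu_j\alpha_j$ reduces it to two scalar first-order equations; boundedness of $v$ forces the unstable coordinate to have the Lyapunov--Perron form
\begin{equation*}
y_j^+(s) = -\frac{1}{2\mu_j}\int_s^\infty e^{-\mu_j(t-s)} g_j(t)\,dt = O(e^{-2\mu_1 s}),
\end{equation*}
while the stable coordinate admits the representation $y_j^-(s) = e^{-\mu_j s}\bigl[y_j^-(0) - (2\mu_j)^{-1}\int_0^s e^{\mu_j t}g_j(t)\,dt\bigr]$. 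Whenever $\mu_j<2\mu_1$ the integrand is integrable on $[0,\infty)$, so $\alpha_j(s) = L_j e^{-\mu_j s} + O(e^{-2\mu_1 s})$ for some $L_j\in\R$. These formulas simultaneously establish existence of the limit and identify the candidate leading direction.

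Let $j^\star := \min\{j : L_j\neq 0\}$; this index is finite because $\bar u\not\equiv a$. Set $\mu:=\mu_{j^\star}$, $\bar K := |L_{j^\star}|$, $z := \mathrm{sign}(L_{j^\star})\, z_{j^\star}$. Then $\bar u(s)-a = \bar K z e^{-\mu s} + w(s)$ with $|w(s)|\le K^+ e^{-\mu^+ s}$ for any $\mu^+\in(\mu,\min(2\mu,\mu_{j^\star+1}))$, which also yields $(\bar u(s)-a)/|\bar u(s)-a|\to z$. The derivative bounds on $w$ are obtained by differentiating $\ddot{\bar u} = W_u(\bar u)$ up to three times: $\dot{\bar u}, \ddot{\bar u}, \dddot{\bar u}$ are algebraic combinations of $v$ and $\dot v$ with smooth coefficients, so they inherit the decomposition $\mathrm{leading} + O(e^{-\mu^+ s})$ term by term. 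The case $s\to -\infty$ is handled by the time reversal $s\mapsto -s$.

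The main obstacle is the case $j^\star > 1$. A priori, the quadratic nonlinearity could generate a contribution to $\alpha_j$ (for $j<j^\star$) of size $e^{-2\mu_1 s}$ that survives the variation-of-constants formula even when $L_j = 0$, spoiling the cleanness of the leading-order identification. The remedy is a finite bootstrap: once $L_1=\cdots=L_{j^\star-1}=0$, one upgrades $|v|=O(e^{-\mu_1 s})$ to $O(e^{-\mu' s})$ for some $\mu'>\mu_1$, which in turn improves $|g_j|=O(e^{-2\mu' s})$; the distinctness of the $\mu_j$ in $\mathbf{H}_2$ rules out quadratic resonances $\mu_i+\mu_k = \mu_{j^\star}$, so iterating a finite number of times pushes the decay of $v$ up to exactly $e^{-\mu s}$ with a nonzero leading coefficient in the direction $z_{j^\star}$. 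Equivalently, one may invoke the smooth strong-stable-manifold filtration of $W^s$ determined by the spectral gaps $\mu_{j-1}<\mu_j$ and observe that $(\bar u,\dot{\bar u})$ must lie on the $(m-j^\star+1)$-dimensional leaf tangent to $\mathrm{span}(z_{j^\star},\ldots,z_m)$ at $Y_*$.
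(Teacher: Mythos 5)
Your approach is genuinely different from the paper's. You expand $\bar u - a$ in the eigenbasis of $W_{uu}(a)$ and apply Lyapunov--Perron formulas component by component; the paper instead restricts to the stable manifold to obtain a first-order system $\dot u=-D(\mu)u+f(u)$, passes to polar coordinates $(r,\nu)$, uses the Lyapunov function $V(\nu)=\frac{1}{2}\nu\cdot D(\mu)\nu$ on the unit sphere to show that $\nu(s)$ converges to an eigenvector $z$, and then reduces, via conservation of energy, to the single scalar ODE $\dot r=-\mu r+g(r)$ with $g(r)=O(r^2)$. A contraction on a weighted space then gives the expansion of $r$, and the explicit bound $|\tilde r|\le 4C\bar r^2/\mu<\bar r$ gives the nondegeneracy $\bar K>0$. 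Reducing to a scalar equation with one exponent is what spares the paper any resonance discussion.

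The gap in your write-up is the claim that ``the distinctness of the $\mu_j$ in $\mathbf{H}_2$ rules out quadratic resonances $\mu_i+\mu_k=\mu_{j^\star}$.'' That is false: $\mathbf{H}_2$ asserts only that $W_{uu}(a)$ has distinct positive eigenvalues, and imposes no arithmetic relation on their square roots --- for instance $\mu_1=1$, $\mu_2=2$ satisfies $\mathbf{H}_2$ and has $\mu_1+\mu_1=\mu_2$. In the presence of such a resonance the integral $\int_0^s e^{\mu_{j^\star}t}g_{j^\star}(t)\,dt$ in your stable-coordinate formula need not converge, and the bootstrap you describe produces intermediate bounds carrying polynomial prefactors in $s$. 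This is repairable (the exponential rate of $|v|$ still improves strictly at each pass, and the polynomial factor is absorbed at the next step once the source decay rate clears $\mu_{j^\star}$), but that analysis must actually be carried out rather than dismissed by appeal to a non-resonance property $\mathbf{H}_2$ does not provide. The ``equivalently'' alternative you mention --- placing $(\bar u,\dot{\bar u})$ on the strong-stable leaf tangent to $\mathrm{span}(z_{j^\star},\ldots,z_m)$ --- is sound and is in spirit the paper's decomposition $\bar u(s)-a=r(s)z+v(r(s))$, but it is only sketched; and with either route you still owe an argument that $j^\star$ exists, equivalently that $\bar K>0$, for which ``because $\bar u\not\equiv a$'' alone is not sufficient.
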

\begin{proof}
1. From the assumptions on $\bar{u}$ it follows that $\bar{u}$ lies on the stable manifold of $a$. This implies that, modulus a linear change of coordinates, we can assume that $a=0$ and $\bar{u}$ is a solution of
\begin{equation}
\dot{u}=-D(\mu)u+f(u),
\label{change}
\end{equation}
where $f$ is a smooth maps that satisfies $\vert f(u)\vert\leq C_f\vert u\vert^2$ for some $C_f>0$ and $D(\mu)$ is the diagonal matrix whose elements $\mu_1,\ldots,\mu_m$ are the square roots of the eigenvalues $\mu_1^2,\ldots,\mu_m^2$ of $W_{uu}(a)$ and $u=(u_1,\ldots,u_m)$ is the vector of the components of $u$ on the orthonormal basis composed by the eigenvectors $z_1,\ldots,z_m$ associated to $\mu_1^2,\ldots,\mu_m^2$.
%%%%%%%%%%%%%%%%%%%%%\zeta
With the change of variable $u=\vert u\vert\nu$, $r=\vert u\vert$ \eqref{change} transforms in the system
\begin{equation}
\begin{split}
&\dot{\nu}=-D(\mu)\nu+(\nu\cdot D(\mu)\nu) \nu+\frac{f(r\nu)}{r}-(\nu\cdot\frac{f(r\nu)}{r})\nu,\\
&\dot{r}=-(\nu\cdot D(\mu)\nu) r+f(r\nu)\cdot\nu.
\end{split}
\label{change1}
\end{equation}

2. There exists $s_0>0$ such that

\begin{equation}
r(s)\leq r(s_0)e^{-\frac{\mu_m}{2}(s-s_0)},\;\;s\geq s_0,\;(\mu_m=\min_h\mu_h).
\label{r-dec}
\end{equation}
 Since $\nu\cdot D(\mu)\nu\geq\mu_m$, \eqref{change1}$_2$ implies $\dot{r}\leq-\mu_mr+C_fr^2$. This and $\lim_{s\rightarrow+\infty}r(s)=0$ yield \eqref{r-dec}.

 3. For $\delta>0$ small set $\mathscr{K}_\delta=\{\nu\in\SF^{m-1}:\nu\not\in\cup_hB_\delta(z_h)\}$ and define
 $V:\SF^{m-1}\rightarrow\R$, $V(\nu)=\frac{1}{2}\nu\cdot D(\mu)\nu$. Then there are $c_\delta>0$ and $r_\delta>0$ such that
 \begin{equation}
\dot{V}(\nu)\leq-c_\delta,\;\;\nu\in\mathscr{K}_\delta,\;r\leq r_\delta.
\label{is neg}
\end{equation}
 We have
 \begin{equation}
 \dot{V}(\nu)=\nu\cdot D(\mu)\dot{\nu}=-\vert D(\mu)\nu\vert^2+(\nu\cdot D(\mu)\nu)^2+F(r,\nu),
 \label{lapunof}
 \end{equation}
 where $\vert F(r,\nu)\vert\leq C_f^\prime r$, for some $C_f^\prime>0$. We have
 $(\nu\cdot D(\mu)\nu)^2\leq\vert D(\mu)\nu\vert^2$ with the sign of strict inequality unless $\nu\in\{z_1,\ldots,z_m\}$. Therefore $\nu\in\mathscr{K}_\delta$ implies
 $-\vert D(\mu)\nu\vert^2+(\nu\cdot D(\mu)\nu)^2\leq-\hat{c}_\delta$ for some $\hat{c}_\delta>0$ and \eqref{lapunof}
  yields
 \[\dot{V}(\nu)\leq-\hat{c}_\delta+C_f^\prime r\leq-\frac{\hat{c}_\delta}{2}=-c_\delta,\;\;r\leq r_\delta=\frac{\hat{c}_\delta}{2C_f^\prime}.\]

4. The existence of a sequence $s_n\rightarrow+\infty$ such that
 \begin{equation}
\lim_{n\rightarrow+\infty}\nu(s_n)=\bar{\nu}\not\in\{z_1,\ldots,z_m\},
\label{-barnu}
\end{equation}
leads to a contradiction.

Fix $\delta\in(0,\bar{\delta}=\min_h\vert\bar{\nu}-z_h\vert)$ so small that
\begin{equation}
\begin{split}
& V(\bar{\nu})>V_\delta^-=\max\{V(\nu):\nu\in\SF^{m-1}\cap(\cup_{V(z_h)<V(\bar{\nu})}\bar{B}_\delta(z_h))
 \},\\
& V(\bar{\nu})<V_\delta^+=\min\{V(\nu):\nu\in\SF^{m-1}\cap(\cup_{V(z_h)>V(\bar{\nu})}\bar{B}_\delta(z_h))
\}.
 \end{split}
\label{barV<}
\end{equation}

this and \eqref{is neg} imply
\begin{equation}
\dot{V}(\nu(s))\leq-c_\delta,\;\;\text{if}\;V(\nu(s))\in[V_\delta^-,V_\delta^+],\;r(s)\leq r_\delta.
\label{forb}
\end{equation}
This contradicts 
 \eqref{-barnu} and \eqref{barV<} that imply

\begin{equation}
V(\nu(s_n))\in[V_\delta^-,V_\delta^+],\;\;n\geq\bar{n}.
\label{barV1}
\end{equation}

for some $\bar{n}$. 
  This concludes the proof of the first part of the lemma.
Indeed the compactness of $\SF^{m-1}$ and 4. implies that for each sequence $s_n\rightarrow+\infty$ there is a subsequence (still denoted $\{s_n\}$) such that 
\[\lim_{n\rightarrow+\infty}\nu(s_n)=z\in\{\pm z_1,\ldots,\pm z_m\},\]
and \eqref{forb} implies that the limit $z$ is independent from the sequence.
\vskip.2cm
 From the existence of the limit
\[\lim_{s\rightarrow+\infty}\frac{\bar{u}(s)-a}{\vert\bar{u}(s)-a\vert}=z,\]
and the smoothness of $W$ it follows that, for $s\geq s_0$, for some $s_0>0$, it results
\begin{equation}\bar{u}(s)-a=r(s)z+v(r(s)),
\label{centerM}
\end{equation}

where $v:(0,r_0]\rightarrow\R^m$, $r_0>0$ a small number, is a smooth map that satisfies
\[\begin{split}
&v(r)\cdot z=0,\;r\in[0,r_0],\\
&\vert v(r)\vert\leq Cr^2,\;r\in[0,r_0].
\end{split}\]

From \eqref{centerM} we have
\[\vert\dot{\bar{u}}\vert^2=\vert\dot{r}\vert^2(1+\vert v^\prime(r)\vert^2).\]
From this, conservation of energy: $\vert\dot{\bar{u}}\vert^2=2W(\bar{u})$, and \eqref{centerM} we get
\begin{equation}
\vert\dot{r}\vert^2(1+\vert v^\prime(r)\vert^2)=2W(a+r z+v(r)).
\label{EQ1}
\end{equation}
Since $W(a)=W_u(a)=0$ it follows that, in a neighborhood of $a$, we have $W(u)=\frac{1}{2}W_{uu}(a)(u-a)\cdot(u-a)+U(u-a)$ where $U$ is a smooth map defined in
a neighborhood of $0\in\R^m$ and $U(q)=\mathrm{O}(\vert q\vert^3)$.
On the basis of these observations, and possibly after a renumbering of the coodinates, we can write
\begin{equation}
2W(a+r z+v(r))=\mu^2r^2+\sum_{h>1}^m\mu_h^2v_h^2(r)+2U(r z+ v(r)),
\label{PotExpression}
\end{equation}
where $\mu_h^2$, $1\leq h\leq m$ are the eigenvalues of $W_{uu}(a_{j+1})$, $\mu=\mu_1$, $z=z_1$ and $v_h$ is the projection of $v$ on the eigenvector $z_h$. Using \eqref{PotExpression} in \eqref{EQ1} after taking the square root we see that
$r(s)$ solves the equation
\begin{equation}
\dot{r}=-\mu r+\Big(\mu r-(\frac{\mu^2r^2+\sum_{h>1}^m\mu_h^2v_h^2(r)+2U(r z+ v(r))}{1+\vert v^\prime(r)\vert^2})^\frac{1}{2}\Big)
=-\mu r+g(r)
\label{EQ2}
\end{equation}
and from the properties of $v$ and $U$ we see that $g$ is a smooth function that satisfies
\begin{equation}
\vert g(r)\vert\leq Cr^2,\;r\leq r_0.
\label{g-est}
\end{equation}
Fix $\bar{s}\in(s_0,+\infty)$ and set $\bar{r}=r(\bar{s})$ and $s=\tau+\bar{s}$. We look at \eqref{EQ2} as a weakly nonlinear equation and try to determine $\rho(\tau)=r(\tau+\bar{s})$ for $s\geq\bar{s}$ as a solution of
\begin{equation}
\rho(\tau)=\bar{r}e^{-\mu\tau}+e^{-\mu\tau}\int_0^\tau e^{\mu t}g(\rho(t))dt,
\label{EQ3}
\end{equation}
Set
\[R_{\bar{r}}=\{\rho:[0,+\infty)\rightarrow\R: \rho(\tau)\leq 2\bar{r}e^{-\mu\tau}\}.\]
$R_{\bar{r}}$ with the distance $d(\rho,\hat{\rho})=\sup_{\tau\in[0,+\infty)}\vert\rho(\tau)-\hat{\rho}(\tau)\vert e^{\mu\tau}$ is a complete metric space. We now show that for $\bar{r}$ small, that is for each $\bar{s}$ sufficiently large, the right hand side of \eqref{EQ3}
 defines a contraction map $\rho\stackrel{\mathcal{K}_{\bar{r}}}{\rightarrow}\tilde{\rho}$ on $R_{\bar{r}}$ and therefore it results
 \[r(\tau+\bar{s})=\rho^*(\tau),\;\tau\in[0,+\infty),\]
 where $\rho^*\in R_{\bar{r}}$ is the fixed point of $\mathcal{K}_{\bar{r}}$. From the definition of $\mathcal{K}_{\bar{r}}$ we have
 \[\vert\tilde{\rho}(\tau)\vert\leq\bar{r}e^{-\mu\tau}+e^{-\mu\tau}C\bar{r}^2\int_0^\infty e^{\mu t}e^{-2\mu t}dt=(\bar{r}+\frac{C}{\mu }\bar{r}^2)e^{-\mu\tau}\]
 and therefore we have that $\tilde{\rho}\in R_{\bar{r}}$ provided
 \begin{equation}
 \bar{r}\leq\frac{\mu}{C}.
 \label{cond1}
 \end{equation}
 We also have
 \[\vert\tilde{\rho}_1(\tau)-\tilde{\rho}_2(\tau)\vert\leq e^{-\mu\tau}\int_0^\tau e^{\mu t}
 \vert g(\rho_1(t))-g(\rho_2(t))\vert dt,\]
 and, using also $g^\prime(r)\leq C_1r$,
 \[\begin{split}
 &\vert g(\rho_1(t))-g(\rho_2(t))\vert\\
 &=\vert\int_0^1g^\prime(\rho_2(t)+\theta(\rho_1(t)-\rho_2(t)))d\theta
 \vert\vert\rho_1(t)-\rho_2(t))\vert\\
 &\leq 2C_1\bar{r}e^{-\mu t}\vert\rho_1(t)-\rho_2(t))\vert.
 \end{split}\]
 It follows
  \[\begin{split}
  &\vert\tilde{\rho}_1(\tau)-\tilde{\rho}_2(\tau)\vert\leq e^{-\mu\tau}2C_1\bar{r}\int_0^\tau e^{\mu t}e^{-\mu t}\vert\rho_1(t)-\rho_2(t))\vert dt\\
  &\leq e^{-\mu\tau}3C_1\bar{r}\int_0^\infty e^{-\mu t}\sup_{\sigma\geq 0}e^{\mu\sigma}\vert\rho_1(\sigma)-\rho_2(\sigma))\vert dt\\
  &= e^{-\mu\tau}2C_1\bar{r} d(\rho_1,\rho_2)\int_0^\infty e^{-\mu t} dt
  = e^{-\mu\tau}2\frac{C_1}{\mu}\bar{r} d(\rho_1,\rho_2),
  \end{split}\]
  and
  \[d(\tilde{\rho}_1,\tilde{\rho}_2)\leq 2\frac{C_1}{\mu}\bar{r}d(\rho_1,\rho_2)\]
  and, using also \eqref{cond1}, we conclude that $\mathcal{K}_{\bar{r}}$ is a contraction on $R_{\bar{r}}$ if
  \[\bar{r}\leq\mu\min\{\frac{1}{C},\frac{1}{2C_1}\}.\]
  From \eqref{EQ3} with $\rho=\rho^*$ we have
  \begin{equation}
  \frac{\rho^*(\tau)}{e^{-\mu\tau}}=\bar{r}+\int_0^\tau e^{\mu t}g(\rho^*(t))dt.
  \label{Lim1}
  \end{equation}
The integral on the right hand side is absolutely convergent. Therefore if we set $\tilde{r}=\int_0^\infty e^{\mu t}g(\rho^*(t))dt$ we can write
 \begin{equation}
  \frac{\rho^*(\tau)}{e^{-\mu\tau}}=\bar{r}+\tilde{r}-\int_\tau^\infty e^{\mu t}g(\rho^*(t))dt.
  \label{Lim2}
  \end{equation}
  From $\vert\rho^*(\tau)\vert\leq2\bar{r}e^{-\mu\tau}$ and \eqref{g-est} it follows
  \[\vert\int_\tau^\infty e^{\mu t}g(\rho^*(t))dt\vert\leq\frac{4C\bar{r}^2}{\mu}e^{-\mu\tau},\]
  and we have
  \begin{equation}
  r(s)=\rho^*(s-\bar{s})=(\bar{r}+\tilde{r})e^{-\mu(s-\bar{s})}+\omega(s)=\bar{K}^+e^{-\mu s}+\omega(s),
  \label{erre}
  \end{equation}
  where $\bar{K}^+=(\bar{r}+\tilde{r})e^{\mu\bar{s}}$ and
  \[\begin{split}
  &\omega(s)=e^{-\mu(s-\bar{s})}\int_{(s-\bar{s})}^\infty e^{\mu t}g(\rho^*(t))dt,\\
  &\vert\omega(s)\vert\leq\frac{4C\bar{r}^2}{\mu}e^{2\mu\bar{s}}e^{-2\mu s},\;\;s\geq\bar{s}.
  \end{split}\]
  If we insert \eqref{erre} into \eqref{centerM} we get
 \begin{equation}
 \begin{split}
 &\bar{u}(s)-a=\bar{K}^+e^{-\mu s}z+w(s),\;\;s\geq\bar{s},\\
 & w(s)=\omega(s)z+v(r(s)).
 \end{split}
 \label{Decay-Est}
 \end{equation}
  From the previous estimate for $\omega$ and $v(r)=\mathrm{O}(r^2)$ it follows
 \begin{equation}
 \vert w(s)\vert\leq K^+e^{-2\mu s},\;\;s\geq\bar{s},
 \label{w-Est}
 \end{equation}
 for some $K^+>0$.
 Actually, by trivially extending the definition of $w$ to the whole $[0,+\infty)$ by setting
 \[w(s)=\bar{u}(s)-a-\bar{K}^+e^{-\mu s}z,\;\;s\in[0,\bar{s})\]
 we can assume that the expression of $\bar{u}(s)-a$ in \eqref{Decay-Est} is valid for all $s\geq 0$.
 Since $w$ is a bounded map, by increasing the constant $K^+$ if necessary, we can also assume that the estimate \eqref{w-Est} is valid for all $s>0$. Similar arguments based on the fact that $D^k{\rho}$ solves the $k$-derivative of \eqref{EQ3} complete the proof.
\end{proof}

\bibliographystyle{plain}

 \end{document}